\def\R{\mathbb{R}}
\def\N{\mathbb{N}}
\def\e{\varepsilon}
\def\la{\lambda}
\def\vp{\varphi}
\numberwithin{equation}{section}
\newtheorem{Theorem}{Theorem}[section]
\newtheorem{Lemma}[Theorem]{Lemma}
\newtheorem{theorem}[Theorem]{Theorem}
\newtheorem{lemma}[Theorem]{Lemma}
\newtheorem{proposition}[Theorem]{Proposition}
\newtheorem{corollary}[Theorem]{Corollary}
\newtheorem{remark}[Theorem]{Remark}
\newtheorem{prop}[Theorem]{Proposition}
\DeclareMathOperator{\dist}{dist}
\newcommand{\cI}{{\mathcal I}}
\newcommand{\cO}{{\mathcal O}}
\newcommand{\cP}{{\mathcal P}}
 \newcommand{\cV}{{\mathcal V}}
\newcommand{\al}{\alpha}
\newcommand{\be}{\beta}
\newcommand{\ga}{\gamma}
\newcommand{\Ga}{\Gamma}
\newcommand{\de}{\delta}
\newcommand{\ep}{\epsilon}
\newcommand{\om}{\omega}
\newcommand{\Om}{\Omega}
\newcommand{\La}{\Lambda}
\newcommand{\codt}{\cdot} 
\newcommand{\ol}{\overline}
\newcommand{\ul}{\underline}
\newcommand{\upto}{\nearrow}
\newcommand{\downto}{\searrow}
\newcommand{\Lao}{\Lambda_{out}}
\newcommand{\hp}{\hat p}
\newcommand{\hq}{\hat q}
\title{Large-time behavior of solutions of parabolic 
  equations on the real line with convergent initial data III:
 unstable limit at infinity
} 
\author{Antoine Pauthier\\
 {\small Department of Mathematics, University of Bremen}\\
 {\small Postfach 22 04 40, 20359 Bremen}\\
 {}\\
{\small and}\\  
 \\
  Peter Pol\'{a}\v{c}ik\footnote{Supported in part by the NSF
    Grant DMS-1856491}  \\
{\small School of Mathematics, University of Minnesota}\\
{\small Minneapolis, MN 55455}
}
\date{}
\begin{document}
\maketitle

\vspace{-.5cm}
 \begin{center}
    {\small \em Dedicated to Eiji Yanagida \\
on the occasion of his 65th birthday} 
 \end{center}
 \medskip

\begin{abstract}
  This is a continuation, and conclusion, of our study
  of bounded solutions $u$ of the semilinear
  parabolic equation $u_t=u_{xx}+f(u)$ on the real line whose
  initial data  $u_0=u(\cdot,0)$ have finite limits
  $\theta^\pm$ as $x\to\pm\infty$. We assume that 
  $f$ is  a locally Lipschitz function on $\R$ satisfying 
  minor nondegeneracy conditions. Our goal is to describe the
  asymptotic behavior of $u(x,t)$ as $t\to\infty$.  In the first two
  parts of this series we mainly considered the cases where either
  $\theta^-\ne \theta^+$;
  or $\theta^\pm=\theta_0$ and $f(\theta_0)\ne0$; or else 
  $\theta^\pm=\theta_0$, $f(\theta_0)=0$, and $\theta_0$ is a stable
  equilibrium of the 
  equation $\dot \xi=f(\xi)$. In all these cases we proved that  the
  corresponding solution $u$ is
  quasiconvergent---if bounded---which is to say that all limit
  profiles of $u(\cdot,t)$  as  $t\to\infty$ are steady states.
  The limit profiles, or accumulation points, are taken in
  $L^\infty_{loc}(\R)$. In the present paper, we take on the
  case that $\theta^\pm=\theta_0$,
  $f(\theta_0)=0$, and $\theta_0$ is an 
  unstable equilibrium of the equation $\dot \xi=f(\xi)$.
  Our earlier  quasiconvergence theorem in this case involved some
  restrictive technical conditions on the solution, which
  we now remove. Our sole
  condition on $u(\cdot,t)$ is that it is nonoscillatory (has only
  finitely many  critical points) at some $t\ge 0$.
  Since it is known that oscillatory
  bounded solutions are not always
  quasiconvergent, our result is nearly optimal. 
\end{abstract}

{\emph{Key words}: Parabolic equations on $\R$, quasiconvergence,
  entire solutions, chains,  spatial
  trajectories,
  zero number}

\footnotesize
\tableofcontents
\normalsize
\section{Introduction and main results}

Consider the Cauchy problem
\begin{align}
  u_t=u_{xx}+f(u), & \qquad x\in\R,\ t>0, \label{eq:1}\\
  u(x,0)=u_0(x), & \qquad x\in\R, \label{ic:1}
\end{align}
where $f$ is a locally Lipschitz function on $\R$ and
$u_0\in C_b(\R):=C(\R)\cap L^\infty(\R).$ We denote by
$u(x,t,u_0),$ or simply $u(x,t)$ if there is no danger of
confusion, the unique classical solution of (\ref{eq:1}),
(\ref{ic:1})---to ensure the uniqueness, we require classical solutions to
satisfy  $u(\cdot,t)\in L^\infty(\R)$ as long as they are
defined---and
by $T(u_0)\in(0,+\infty]$ its maximal existence
time. If $u$ is
bounded on $\R\times[0,T(u_0))$, then necessarily $T(u_0)=+\infty,$
that is, the solution is global.

As in the previous two parts of this
paper series, \cite{p-Pauthier1,p-Pauthier2},
we examine solutions with
initial data $u_0$ taken in the space 
\begin{equation}\label{spacelimit}
  \mathcal{V}:=\left\{ v\in C_b(\R):\textrm{ the limits }
    v(-\infty),\,v(+\infty)\in \R \textrm{ exist}\right\}.
\end{equation}
In other words, we assume that
the initial datum $u_0$ has limits as $x\to\pm\infty.$
It is well known 
that the space $\mathcal V$ is invariant for
equation (\ref{eq:1}): if $u_0$ admits finite
limits as $x\to\pm\infty$, then so does $u(\cdot,t,u_0)$ for any $t\in
(0,T(u_0))$ (see Lemma \ref{le:limits} below; note that
the limits may vary with $t$).  So  it is natural to
consider $\mathcal V$ as a state space for equation \eqref{eq:1}.
Our goal is to understand the large-time behavior of
bounded solutions $u(\cdot,t)\in \cV$ and, in particular, 
to clarify if, in any fixed bounded interval,
the shape of $u(\cdot,t)$ at large times
is determined by steady states
of \eqref{eq:1}. To express this formally,   we
introduce the $\omega$-limit set of a bounded solution $u$:
\begin{equation}\label{defomega}
  \omega(u):=\left\{ \vp\in L^\infty(\R),\ u(\cdot,t_n)\to\vp \textrm{
      for some sequence }t_n\to\infty\right\}. 
\end{equation}
Here the convergence is in the topology of $L^\infty_{loc}(\R)$, that
is, the locally uniform convergence. By standard parabolic estimates,
the trajectory $\{ u(\cdot,t),\ t\geq1\}$ of a bounded solution is
relatively compact in $L^\infty_{loc}(\R).$ This implies that
$\omega(u)$ is nonempty, compact, and connected in $L^\infty_{loc}(\R)$,
and it attracts the solution in (the metric space)
$L^\infty_{loc}(\R)$:
$$
\textrm{dist}_{L^\infty_{loc}(\R)}\left(
  u(\cdot,t),\omega(u)\right)\underset{t\to\infty}{\longrightarrow}0.
$$
If the $\omega-$limit set reduces to a single element $\varphi$, then
$u$ is \textit{convergent:} $u(\cdot,t)\to\vp$ in $L^\infty_{loc}(\R)$
as $t\to\infty$.  Necessarily, $\vp$ is a steady state of \eqref{eq:1}.
If all functions $\vp\in\omega(u)$ are steady states of (\ref{eq:1}),
the solution $u$ is said to be \textit{quasiconvergent}.
Convergence and quasiconvergence both express a relatively tame
character of the solution in question. In both cases,
$u_t(\cdot,t)$ approaches zero locally uniformly on $\R$
as $t\to \infty$. For this reason, it is difficult to
numerically distinguish convergence from quasiconvergence
(analytically, convergence is characterized by the existence of
the improper Riemann integral of $u_t(x,t)$ on $[1,\infty)$
for each $x$).

For analogs of \eqref{eq:1} on bounded intervals under
Dirichlet, Neumann, Robin, or periodic 
boundary conditions, or 
sometimes even for \eqref{eq:1} itself when solutions
in suitable energy
spaces are considered,  quasiconvergence of solutions can be
established by means of energy estimates (see, for example,
\cite{Feireisl_NoDEA97}). However, the existence of
the limits $u_0(\pm\infty)$ alone is not sufficient for
quasiconvergence. As shown in  \cite{P:examples,P:unbal},
bounded solutions in $\mathcal V$ which are not quasiconvergent do
exist. (We emphasize here that  the locally uniform convergence
is taken in the definition of the $\om$-limit set and
the corresponding notion of
quasiconvergence; if the uniform convergence is taken instead, the
existence of bounded solutions which are not quasiconvergent
is rather trivial).
Moreover, the existence of such solutions is not an exceptional
phenomenon at all; it is guaranteed by a robust condition on  $f$, namely
the existence of a bistable interval.
Note, however, that steady states
are not completely irrelevant for the behavior of 
non-quasiconvergent solutions. A result of \cite{Gallay-S,Gallay-S2}
shows that the $\om$-limit set of any bounded solution of \eqref{eq:1} 
contains at least one steady state. There are convergence and
quasiconvergence results for various specific classes
of solutions of \eqref{eq:1}, see
\cite{Du_Matano,p-Du,p-Fe,Li-L-L,Matano_Polacik_CPDE16,p-Ma:1d-p2,
  Polacik_terrasse, Risler_1,Risler:terrace-1d};
 overviews can be found
in \cite{P:quasiconv-overview, p-Pauthier2}).  More recent results
include a convergence theorem of  \cite{Ding-M:compact-sp}
for positive solutions of  periodic versions of \eqref{eq:1} with compact
initial data and a description of the large-time behavior of
entire solutions with localized past \cite{Hamel-Ni}  (the latter 
paper deals with equations on $\R^N$ and
its introduction also contains an overview of earlier results
for multidimensional parabolic problems). 

Our study of solutions in $\mathcal V$, which we conclude in
this paper, yields a rather complete information on the
quasiconvergence property of bounded solutions in this space.
In our first result,  the main theorem of \cite{p-Pauthier1}, we proved that
if the limits $\theta^\pm:=u_0(\pm\infty)$ are distinct, then the solution $u$
of \eqref{eq:1}, \eqref{ic:1} is quasiconvergent, if bounded. In
\cite{p-Pauthier2}, we then showed that the same is true
if  $\theta^-=\theta^+=:\theta_0$, and either $f(\theta_0)\ne0$ or
$f(\theta_0)=0$ and $\theta_0$ is a stable
equilibrium of the equation $\dot \xi=f(\xi)$.
In this result, we assumed the following nondegeneracy condition on
the nonlinearity:  
\begin{description}
\item[\bf(ND)] For each $\gamma\in f^{-1}\{0\}$, $f$ is of class $C^1$
 in a neighborhood of $\gamma$ and $f'(\gamma)\neq0.$
\end{description}
Hence, the stability of $\theta_0$ simply means that
$f'(\theta_0)<0$.

In the remaining case, $\theta^\pm=\theta_0$, $f(\theta_0)=0$ with
$\theta_0$ unstable ($f'(\theta_0)>0$), the above quasiconvergence
result is not valid without additional conditions on the solution;
this is  documented by the examples of \cite{P:examples,P:unbal}, as
already mentioned above. It is intriguing, however, 
that all non-quasiconvergent solutions $u$
found in these examples share a prominent feature: 
they are oscillatory in the sense that 
$u(\cdot,t)$ has infinitely many
critical points at all times $t>0$. This raises a 
natural question whether without the oscillations
the solution is necessarily quasiconvergent, if bounded. 
More precisely, the question is
whether the solution of \eqref{eq:1}, \eqref{ic:1} is quasiconvergent,
provided it is bounded and satisfies the
following condition: 
\begin{description}
\item[\bf (NC)] There is $t> 0$ such that $u(\cdot,t)$ has only
  finitely many critical points.
\end{description}

We remark that if (NC) holds for some $t$, then it holds for any
larger $t$ due to well known properties  of the zero number of $u_x(\cdot,t)$
(see Section \ref{sub:zero}). In Remark \ref{rm:suff} below,
we mention some
sufficient conditions for the validity of  (NC) in terms of $u_0$. 

In \cite{p-Pauthier2}, we left open the question whether (NC) alone is
sufficient for the quasiconvergence of $u$; we only proved the
quasiconvergence assuming (NC) holds together with some additional and
somewhat artificial conditions. The main theorem of the present
paper gives a positive answer  without any extra  condition: 

\begin{theorem}\label{thm:1}
  Assume that \emph{(ND)} holds, and 
  $u_0\in \cV$ has both its limits
  $u_0(\pm\infty)$ equal to some $\theta_0\in \R$ with
  $f(\theta_0)=0<f'(\theta_0)$.  Then, if the solution $u$ of \eqref{eq:1},
  \eqref{ic:1} is bounded and satisfies \emph{(NC)},
  it is quasiconvergent: $\om(u)$
  consists of steady states of \eqref{eq:1}. 
\end{theorem}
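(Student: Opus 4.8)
The plan is to show that every $\vp\in\om(u)$ is a steady state by tracking the spatial trajectories $\{(u(x,t),u_x(x,t)):x\in\R\}$ in the phase plane of the steady-state equation, using the zero number of $u_x$ to keep the spatial structure under control. First I would record the two structural consequences that make (NC) usable. Pinning at infinity: since $u_0(\pm\infty)=\theta_0$ and the boundary values $u(\pm\infty,t)$ solve $\dot\xi=f(\xi)$ with $f(\theta_0)=0$ (Lemma~\ref{le:limits}), one has $u(\pm\infty,t)\equiv\theta_0$ for all $t\ge0$. Nonoscillation: by the properties of the zero number recalled in Section~\ref{sub:zero}, the number $z(u_x(\cdot,t))$ is finite past the time in (NC), nonincreasing, and drops only at multiple zeros of $u_x$; hence there are $T\ge0$ and $N\in\N$ so that $u(\cdot,t)$ has exactly $N$ critical points for all $t\ge T$, none of them degenerate and no two colliding. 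Passing to a limit along $t_n\to\infty$, each $\vp\in\om(u)$ inherits at most $N$ critical points and is strictly monotone, hence has one-sided limits, outside a compact interval (note that these limits need not equal $\theta_0$, since the convergence defining $\om(u)$ is only local).

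Next comes the heart of the matter, the Hamiltonian geometry of $\vp_{xx}+f(\vp)=0$, whose orbits are the level curves of $E(\xi,p)=\tfrac12 p^2+F(\xi)$ with $F'=f$. Because $f(\theta_0)=0<f'(\theta_0)$, the point $(\theta_0,0)$ is a strict local minimum of $E$, so it is a \emph{center} of the spatial ODE: there is no nonconstant bounded steady state with both limits $\theta_0$ (no orbit is homoclinic to a center), and near $(\theta_0,0)$ every orbit is periodic. A function with finitely many critical points is a steady state precisely when its spatial trajectory lies on a single level curve, i.e. $E(\vp,\vp_x)$ is constant in $x$. The goal is therefore to prove that, along $t_n$, the spatial trajectories converge to such level curves, equivalently that the critical values of $u(\cdot,t)$ stabilize. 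Here I would invoke the chain machinery of the earlier parts of the series: the limiting spatial structure, read from $x=-\infty$ to $x=+\infty$, is a \emph{chain} of steady-state arcs issuing from and returning to $(\theta_0,0)$, and each $\vp\in\om(u)$ is a translate of one such arc, hence a steady state.

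To turn this into a proof I would pass to the entire solution. Since $\om(u)$ is compact, connected and invariant, through each $\vp$ there runs an entire solution $U$ with $U(\cdot,t)\in\om(u)$ for all $t\in\R$ and $U(\cdot,0)=\vp$; by the above, $U(\cdot,t)$ has a finite, nonincreasing number of (nondegenerate) critical points, and by the invariance and recurrence of $\om(u)$ together with the monotonicity of $z(U_x)$ this number is in fact constant in $t$, so the critical-point curves $x_i(t)$ are smooth and never collide. I would then examine $v:=U_t$, which solves the linear equation $v_t=v_{xx}+f'(U)v$; its zero number is finite and nonincreasing on all of $\R$. If $v\not\equiv0$ one may assume $v$ keeps a fixed sign structure; in the simplest case $v$ is of one sign, so $U$ is strictly monotone in $t$ and, being bounded, converges as $t\to\pm\infty$ to two \emph{distinct} steady states $\psi^\pm\in\om(u)$ whose spatial trajectories are level curves. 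The constancy of the critical count along $U$, the trapped monotone tails forced by the pinning at $\theta_0$, and the center structure at $(\theta_0,0)$ are then incompatible with connecting two distinct level curves, a contradiction; hence $v=U_t\equiv0$ and $\vp=U(\cdot,0)$ is a steady state. As $\vp\in\om(u)$ was arbitrary, $u$ is quasiconvergent.

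The main obstacle is exactly the control near spatial infinity, where $\theta_0$ is unstable in time. Because $f'(\theta_0)>0$, a sign change of $u-\theta_0$ drifting toward $x=\pm\infty$ can persist and feed sustained oscillations—this is precisely the mechanism behind the non-quasiconvergent examples of \cite{P:examples,P:unbal}. Condition (NC) is what rules this out: it caps the number of critical points, forces the tails into a monotone approach toward $\theta_0$, and prevents the spatial trajectory from reopening near the center $(\theta_0,0)$. The delicate point, where the center geometry at $\theta_0$ and the zero-number bookkeeping must be combined with real care, is upgrading the stabilization of the critical values from convergence merely along a sequence $t_n$ to genuine constancy along the entire solution $U$, which is what ultimately yields $U_t\equiv0$.
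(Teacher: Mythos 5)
Your preparatory steps are sound and match the paper's Lemma \ref{le:entirein0}: the pinning $u(\pm\infty,t)\equiv\theta_0$, the eventual bound on the number of critical points, and the structural properties inherited by entire solutions $U$ in $\om(u)$ (finitely many simple zeros of $U$ and $U_x$, simple spatial trajectories, monotone tails). But the heart of your argument does not work, for a reason that is worth understanding precisely. Your final contradiction is claimed to be \emph{local/geometric}: you assert that the constancy of the critical count, the monotone tails, and the center structure at $(\theta_0,0)$ are ``incompatible with connecting two distinct level curves.'' This is false. Entire solutions with exactly these properties that connect two distinct invariant sets do exist --- threshold-type solutions connecting $0$ at $t=-\infty$ to a ground state or standing wave at $t=+\infty$ are of this kind (cf.\ \cite{P:unbal}), and indeed the paper's main technical result, Proposition \ref{prop:entire}, proves that every non-steady entire solution in $\om(u)$ with spatial trajectories in $\Pi_0$ \emph{is} such a connection: $\alpha(U)=\{0\}$ and $\tau(\om(U))\subset\Lao(\Pi_0)$. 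What makes these connections impossible as members of $\om(u)$ is not their geometry but a global dynamical fact: the chain-recurrence property of $\omega$-limit sets. One must first classify \emph{all} non-steady entire solutions in $\om(u)$, show they all flow one way (from the trivial inner chain $\{(0,0)\}$ to the outer loop), and only then does the one-directional flow contradict chain recurrence (\cite[Section 6]{p-Pauthier2}). Your proposal contains no substitute for this global argument, so the step ``hence $U_t\equiv 0$'' is unsupported; this is the proof's actual content, not a technicality.

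Two further points would fail even if the above were repaired. First, your reduction to $v=U_t$ of constant sign is unjustified: condition (NC) bounds zeros of $u_x$, not of $u_t$, so there is no reason $z(U_t(\cdot,t))$ is finite, and the sign-changing case is never addressed ("in the simplest case" covers only one scenario). Second, your phase-plane picture is wrong at a crucial point: since $f'(\theta_0)>0$, the point $(\theta_0,0)$ is a \emph{center} of \eqref{eq:sys}, so no homoclinic or heteroclinic orbit emanates from it; the chain containing it is the trivial chain $\{(\theta_0,0)\}$. Hence there are no ``steady-state arcs issuing from and returning to $(\theta_0,0)$'' of which elements of $\om(u)$ could be translates. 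The steady states actually present in $\om(u)$ have trajectories in $\Lao(\Pi_0)$, a loop whose equilibria are zeros of $f$ with $f'<0$, i.e., different from $\theta_0$ --- which is exactly why the case analysis (T1)--(T3) in Section \ref{sec:entire}, with its comparison arguments against periodic orbits and front-convergence results, is needed to relate the behavior of $U$ near the unstable equilibrium $0$ to that outer loop.
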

This theorem, combined with the results of  \cite{p-Pauthier1,p-Pauthier2},
gives the following corollary concerning general bonded solutions which
are nonoscillatory in the spatial variable: 

\begin{corollary}\label{co:1}
  Assume that \emph{(ND)} holds and let $u$ be a bounded solution of
  \eqref{eq:1} such that \emph{(NC)} holds.  Then
   $u$ is  quasiconvergent.
\end{corollary}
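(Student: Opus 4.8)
The plan is to deduce the corollary from Theorem \ref{thm:1} together with the earlier quasiconvergence results of \cite{p-Pauthier1,p-Pauthier2}, by first reducing to the state space $\cV$ and then distinguishing cases according to the limits of the solution at $\pm\infty$.

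The corollary is stated for an arbitrary bounded solution $u$, with no a priori assumption that its initial datum lies in $\cV$, so the first step is to place ourselves in that setting. Let $t_0>0$ be a time at which \emph{(NC)} holds, so that $u(\cdot,t_0)$ has only finitely many critical points. Choosing $M>0$ larger than all of them, $u(\cdot,t_0)$ has no critical point on $(-\infty,-M]$ or on $[M,\infty)$, hence, its $x$-derivative being continuous and sign-definite there, it is monotone on each of these two intervals; since $u$ is bounded, the limits $\theta^\pm:=u(\pm\infty,t_0)$ exist and are finite, i.e. $u(\cdot,t_0)\in\cV$. Replacing the initial time $0$ by $t_0$ only shifts time and leaves $\om(u)$ unchanged, so from now on I may assume $u_0\in\cV$, with limits $\theta^\pm$. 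Note that \emph{(NC)} continues to hold at the new initial time, and in fact at all later times, by the monotonicity of the zero number of $u_x$ recalled in the excerpt.

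With the solution now in $\cV$, I would split into cases. If $\theta^-\ne\theta^+$, quasiconvergence follows at once from the main theorem of \cite{p-Pauthier1}, with no use of \emph{(NC)}. If $\theta^-=\theta^+=:\theta_0$ and $f(\theta_0)\ne0$, or if $f(\theta_0)=0$ with $\theta_0$ a stable equilibrium of $\dot\xi=f(\xi)$ (equivalently $f'(\theta_0)<0$, by \emph{(ND)}), quasiconvergence is provided by the results of \cite{p-Pauthier2}, again without invoking \emph{(NC)}. The only remaining possibility, using \emph{(ND)} to exclude $f'(\theta_0)=0$, is $f(\theta_0)=0<f'(\theta_0)$: the unstable case. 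Here all hypotheses of Theorem \ref{thm:1} hold---$u_0\in\cV$ with both limits equal to $\theta_0$, $f(\theta_0)=0<f'(\theta_0)$, $u$ bounded, and \emph{(NC)}---so that theorem gives quasiconvergence. As these cases are exhaustive, $\om(u)$ consists of steady states in each of them.

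No serious difficulty arises, the corollary being essentially a synthesis of results established here and in the previous two parts; the only point demanding care is the opening reduction, where one must verify that \emph{(NC)} together with boundedness really does force $u(\cdot,t_0)\in\cV$, and that the normalizing time shift preserves both \emph{(NC)} and the $\omega$-limit set. Once this is secured, the case analysis merely routes each configuration of $\theta^\pm$ and $f$ to the appropriate reference, with the genuinely new unstable case dispatched by Theorem \ref{thm:1}.
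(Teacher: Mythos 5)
Your proposal is correct and follows essentially the same route as the paper's own proof: use (NC) to place $u(\cdot,t_0)$ in $\cV$, then dispatch the three cases (distinct limits, equal limits with $f(\theta_0)\neq 0$ or $\theta_0$ stable, equal limits with $\theta_0$ unstable) to \cite{p-Pauthier1}, \cite{p-Pauthier2}, and Theorem \ref{thm:1}, respectively. The extra details you supply (boundedness plus eventual monotonicity giving finite limits, invariance of $\om(u)$ under the time shift, (ND) excluding $f'(\theta_0)=0$) are exactly the points the paper leaves implicit.
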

\begin{proof}
  Choose a large enough $t_0$ such that (NC) holds with $t=t_0$:
  $u(\cdot,t_0)$ has only
  finitely many critical points. 
  Replacing the initial datum of the solution $u$ by
  $u_0:=u(\cdot,t_0)$, we achieve that
  $u_0$ is monotone near $\pm\infty$;  in
  particular, $u_0\in\cV$. If
  the limits $\theta^\pm:=u_0(\pm\infty)$
  are distinct, or are both equal to  $\theta_0$ where
  either $f(\theta_0)\ne0$ or $\theta_0$ is a stable equilibrium of $\dot
  \xi=f(\xi)$, we apply the results of  \cite{p-Pauthier1} or
  \cite{p-Pauthier2}, respectively. If the limits are both equal to an
  unstable equilibrium of $\dot
  \xi=f(\xi)$, we apply Theorem \ref{thm:1}. We thus obtain the
  quasiconvergence conclusion in all cases. 
\end{proof}

\begin{remark}
  \label{rm:suff}
  {\rm
    \begin{itemize}[align=left,itemindent=3ex,leftmargin=0pt]
    \item[(i)]
    We mention here some simple sufficient conditions, in terms of the
    initial data $u_0$, for the validity of the assumptions on the
    solution $u$ in Theorem \ref{thm:1}.   
    A sufficient condition for the boundedness of the solution of
    \eqref{eq:1}, \eqref{ic:1} is that $u_0$ takes values between two
    constants $\xi<\eta$ satisfying $f(\xi)>0>f(\eta)$. This follows from the
    comparison principle. A sufficient condition for (NC) is that
    $u_0$ has only finitely many critical points if it is of class
    $C^1$. This is a consequence of the
    monotonicity of the zero number of $u_x(\cdot,t)$. More generally,
    (NC) holds if there are constants 
$a<b$ such that the function $u_0$ is monotone and nonconstant on each
of the intervals $(-\infty,a)$, $(b,\infty)$.  Indeed, if this holds,
one shows easily, using the comparison principle (comparing $u$ and
its spatial shifts) that for small $t>0$ the function is strictly
monotone on each of the intervals  $(-\infty,a-1)$, $(b+1,\infty)$;
the strong comparison principle then shows that $u_x(x,t)$ has no zero
in these intervals for small $t$. Consequently, by
properties of the zero number
of $u_x(\cdot,t)$ (cp. Section \ref{sub:zero}),
$u(\cdot,t)$ has only a finite number of critical points for all
$t>0$.
 \item[(ii)] As mentioned above, bounded solutions that do not satisfy
   (NC) are not quasiconvergent in general. In this sense, condition
   (NC) is optimal. However, some generalization are probably still
   possible. For example, one may ask if it is sufficient to
   assume that for some $t$ there is $\rho$ such that $u(\cdot,t)$ has
   no critical points in at least one of the intervals $(-\infty,\rho)$,
   $(\rho,\infty)$. (Note that, as in (i), if
   $u(\cdot,t)$ has no critical points
   in the union of these intervals, then (NC) holds for larger
   times). Another question is whether condition
   (NC) can be replaced by the weaker requirement that
   $u(\cdot,t)-\theta_0$ has only 
   finitely many zeros for some $t$. Our proof does not apply in these
   cases and we do not pursue these generalizations. 
    \end{itemize}
}
\end{remark}

In the proof of Theorem \ref{thm:1}, we build on the strategy
and some technical results of \cite{p-Pauthier2}. The strategy
consists in careful analysis of a certain type of entire solutions of
\eqref{eq:1}. By an entire solution we mean a solution $U(x,t)$ of
\eqref{eq:1} defined for all $t\in\R$ (and $x\in\R$). It is well known
that for any $\vp\in\omega(u)$ there exists a unique entire solution
$U(x,t)$ of \eqref{eq:1} such that
$U(\cdot,0)=\vp$, and this solution satisfies
$U(\cdot,t)\in\omega(u)$ for all $t\in\R$.
This is how entire solutions are relevant for our problem. The
assumption  $u_0\in \cV$ poses some restrictions on the structure of
entire solutions that can possibly be contained in $\om(u)$. 
Using these structural properties  in combination with the chain
recurrence property of $\om(u)$, we were able to prove in
\cite{p-Pauthier2}, assuming that $\theta^\pm=\theta_0$ and 
$f(\theta_0)=0>f'(\theta_0)$, that all the entire solutions in
$\om(u)$ are necessarily steady states. To prove the same in the
present case, 
$\theta^\pm=\theta_0$ and $f(\theta_0)=0<f'(\theta_0)$,
assuming (NC), we need to consider a class of
entire solution not covered by the analysis of \cite{p-Pauthier2}
(see Section \ref{sptraj} below for more details on this).
We prove a classification result for such entire solution (see
Proposition \ref{prop:entire}), after which a general
conclusion from \cite{p-Pauthier2} becomes applicable and we
obtain our quasiconvergent result.

The rest of the paper is organized as follows. In the next section,
we define the concepts of a chain of steady states of \eqref{eq:1}
and spatial trajectories of solutions of \eqref{eq:1}. We use these
concepts to state a proposition which has Theorem \ref{thm:1} as a corollary. 
The proposition  is then
proved in Sections \ref{sec:sptraj} and \ref{sec:entire}. 
In the preliminary Section \ref{prelims}, we recall
several technical results from earlier papers, and discuss the basic
properties of $\alpha$ and $\om$-limit sets and the zero number. 

Below, it will be convenient to assume the following additional
condition on the nonlinearity:
\begin{description}
\item[\bf(MF)]$f$ is globally Lipschitz and there is $\kappa>0$ such
  that for all $s$ with $|s|>\kappa$ one has $ f(s)={s}/{2}.$
\end{description}
Since this condition concerns the 
behavior of $f(u)$ for large values of $|u|$, it can be assumed
with no loss of generality. Indeed, our  quasiconvergence theorem 
deals with an individual bounded solution, thus modifying
$f$ outside the range of this solution has no effect on the
validity of the theorem. 

Conditions (ND), (MF), are our \emph{standing hypotheses on $f$}.
With no loss of generality, shifting $f$ if necessary, we will also
assume that $\theta_0$ in Theorem \ref{thm:1} is equal to
zero. Thus, we henceforth also assume that
\begin{equation}
  \label{eq:17}
  f(0)=0, \quad f'(0)>0.
\end{equation}

\section{Spatial trajectories and chains}
\label{concepts}

As in \cite{p-Pauthier2},  we employ
a geometric technique  involving spatial trajectories of solutions of
\eqref{eq:1}. Our analysis consists mainly in the examination of
how spatial trajectories of entire solutions of \eqref{eq:1} are
related to chains of the planar system corresponding to the equation
 for the steady states of \eqref{eq:1}:
\begin{equation}
  \label{eq:steady}
  u_{xx}+f(u)=0,\quad x\in\R.
\end{equation}

We define the concept of a chain in the next subsection, after
recalling some basic properties of the planar trajectories of
\eqref{eq:steady}. Spatial trajectories of solutions of
entire solutions of \eqref{eq:1} are defined
in Subsection \ref{sptraj}. In that subsection, we
state a result concerning entire solutions which implies
Theorem \ref{thm:1}.

\subsection{Steady states  of \eqref{eq:1}  and chains}\label{stst}
Consider the
planar system
\begin{equation}\label{eq:sys}
  u_x=v,\qquad v_x=-f(u),
\end{equation}
associated with equation \eqref{eq:steady}. 

It  is a Hamiltonian system with respect to the
energy
\begin{equation}\label{energy}
  H(u,v)=\frac{v^2}{2}+F(u),
\end{equation}
where $F(u)=\int_0^uf(s)\,ds$.
Thus, each orbit of \eqref{eq:sys} is
contained in a level set of $H.$ The level sets are symmetric with
respect to the $u-$axis, and our extra hypothesis (MF) implies that
they are all bounded. Therefore, all orbits of \eqref{eq:sys} are bounded
and there are only four types of them: equilibria (all of which are on
the $u-$axis), nonstationary periodic orbits (by which we mean orbits
of nonstationary periodic solutions), homoclinic orbits, and
heteroclinic orbits.
Following a common terminology, we say that a
solution $\vp$ of \eqref{eq:steady} is a \emph{ground state at level
  $\ga$} if the corresponding solution $(\vp,\vp_x)$ of \eqref{eq:sys} is
homoclinic to the equilibrium $(\ga,0)$; we say that $\vp$ is \emph{a
  standing wave of \eqref{eq:1} connecting $\ga_-$ and $\ga_+$} if
$(\vp,\vp_x)$ is a heteroclinic solution of \eqref{eq:sys} with limit
equilibria $(\ga_-,0)$ and $(\ga_+,0)$.

Each nonstationary periodic orbit $\mathcal{O}$ is symmetric about
the $u-$axis and for some $p<q$ one has
\begin{align}
  \mathcal{O}\cap\{ (u,0):u\in\R\} & = \left\{(p,0),(q,0)\right\}, \nonumber \\
  \mathcal{O}\cap \left\{(u,v):v>0\right\} & = \left\{\left( u,\sqrt{2(F(p)-F(u))}\right):u\in(p,q)\right\}. \label{periodicorbits}
\end{align}

Let
\begin{align}
  \mathcal{E} & := \{ (a,0):f(a)=0\}\  \textrm{ (the set of all equilibria of \eqref{eq:sys})}, \nonumber \\
  \mathcal{P}_0 & :=\{(a,b)\in\R^2: (a,b)\textrm{ lies on a nonstationary periodic orbit of \eqref{eq:sys}}\}, \nonumber \\
  \mathcal{P} & := \mathcal{P}_0\cup\mathcal{E}\  \textrm{ (the union of
                all periodic orbits of \eqref{eq:sys}, including the equilibria)}. \nonumber
\end{align}

The following lemma is the same as \cite[Lemma 2.1]{p-Pauthier2}, which, 
except for the last two statements in (i), was originally proved in 
\cite[Lemma 3.1]{Matano_Polacik_CPDE16}. It gives a description
of the phase plane portrait of
\eqref{eq:sys} without the nonstationary periodic orbits.

\begin{lemma}\label{MatPolLemma}
  The following two statements are valid.
  \begin{enumerate}
  \item[(i)] Let $\Sigma$ be a connected component of
    $\R^2\setminus\mathcal{P}_0.$ Then $\Sigma$ is a compact set
    contained in a level set of the Hamiltonian $H$ and one has
    \begin{equation*}
      \Sigma = \left\{(u,v)\in\R^2:u\in J,\ v=\pm\sqrt{2(c-F(u))}\right\}
    \end{equation*}
    where $c$ is the value of $H$ on $\Sigma$ and $J=[p,q]$ for some
    $p,q\in\R$ with $p\leq q.$ Moreover, if $(u,0)\in\Sigma$ and
    $p<u<q,$ then $(u,0)$ is an equilibrium.  If $p<q,$ the points
    $(p,0)$ and $(q,0)$ lie on homoclinic
    orbits. 
    If $p=q,$ then $\Sigma=\{(p,0)\},$ and $p$ is an unstable
    equilibrium of the equation $\dot \xi=f(\xi)$.
  \item[(ii)] Each connected component of the set
    $\R^2\setminus\mathcal{P}$ consists of a single orbit of
    \emph{\eqref{eq:sys}}, either a homoclinic orbit or a heteroclinic
    orbit.
  \end{enumerate}
\end{lemma}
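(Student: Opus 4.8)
The plan is to read off the entire phase portrait from the level sets of the Hamiltonian $H$ in \eqref{energy}, exploiting that (MF) makes $F$ coercive and (ND) makes its critical points nondegenerate and finite in number. First I would record the basic regularity facts: since $f$ is continuous, $F\in C^1$ and $\grad H=(f(u),v)$, so the only critical points of $H$ are the equilibria in $\cE$, and $\grad H\ne 0$ off $\cE$. By (MF) one has $F(u)\to+\infty$ as $|u|\to\infty$, hence every sublevel set $\{F\le c\}$ is compact and every level set of $H$ is bounded. By (ND) the zeros of $f$ are nondegenerate critical points of $F$ (strict local minima where $f'>0$, strict local maxima where $f'<0$); they lie in $[-\kappa,\kappa]$ and are isolated, hence finite in number, so $H$ has only finitely many critical values.

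Next I would show that every \emph{regular} level set is a union of nonstationary periodic orbits. If $c$ is not a critical value, then $H^{-1}(c)$ contains no equilibrium, so it is a compact $1$-manifold without boundary by the implicit function theorem; each of its components is therefore a circle carrying a nonvanishing flow, hence a single nonstationary periodic orbit. Consequently $H^{-1}(c)\subset\mathcal{P}_0$ for every regular $c$, and $\R^2\setminus\mathcal{P}_0$ is contained in the finite union of the critical level sets. Since $\mathcal{P}_0$ is open (a point of a periodic orbit is a regular point lying on a circle component, and nearby level sets are again circles by a flow-box argument), a connected component $\Sigma$ of the closed set $\R^2\setminus\mathcal{P}_0$ is compact, and being connected and contained in a disjoint finite union of distinct level sets it must lie in a single $H^{-1}(c)$.

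I would then extract the stated normal form. Writing $H^{-1}(c)=\{(u,v):v=\pm\sqrt{2(c-F(u))},\ F(u)\le c\}$ exhibits the symmetry about the $u$-axis, and its projection to the $u$-axis is the finite union of compact intervals making up $\{F\le c\}$. Fix the component $[p,q]$ relevant to $\Sigma$. Using (ND) I would classify the boundary behaviour: if $p=q$ the point is an isolated minimum of $F$ at level $c$, i.e.\ a center with $f'(p)>0$, giving $\Sigma=\{(p,0)\}$ and $p$ unstable for $\dot\xi=f(\xi)$; if $p<q$ the endpoints satisfy $f(p)<0<f(q)$ (a critical point touching $c$ there is excluded by nondegeneracy), so they are turning points, and an interior point $(u,0)\in\Sigma$ forces $F(u)=c$ with $u$ an interior maximum, hence $f(u)=0$ and $f'(u)<0$: an equilibrium (saddle). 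This is exactly the description in (i), with $J=[p,q]$.

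Finally I would pin down the orbit types. The system \eqref{eq:sys} is reversible under $x\mapsto-x$, $v\mapsto-v$, so the orbit through a turning point $(p,0)$ coincides with its reflection; being a bounded, nonperiodic orbit its $\alpha$- and $\omega$-limits are equilibria, and reversibility forces them to be the \emph{same} saddle, so $(p,0)$ (and likewise $(q,0)$) lies on a homoclinic orbit. For (ii), removing $\cE$ from a component $\Sigma$ of type $p<q$ deletes the finitely many interior saddles and splits $\Sigma$ into the open arcs between consecutive equilibria; each arc is a single bounded orbit that is neither an equilibrium nor periodic, hence homoclinic or heteroclinic, and these arcs are precisely the connected components of $\R^2\setminus\cP$. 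I expect the main obstacle to be the two ``soft'' topological steps---the openness of $\mathcal{P}_0$ and the identification of regular level components with periodic orbits---because $f$ is only Lipschitz, so $H$ need not be $C^2$ and one cannot appeal to a smooth Poincar\'e return map; these must instead be run through the $1$-manifold classification and a flow-box argument, with the reversibility symmetry doing the work of distinguishing homoclinic from heteroclinic connections.
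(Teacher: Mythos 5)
Your overall strategy---reading the phase portrait off the level sets of $H$, showing that regular level-set components are nonstationary periodic orbits, and then analyzing the finitely many critical levels---is the standard argument; note that the paper itself gives no proof of this lemma but cites \cite[Lemma 2.1]{p-Pauthier2} and \cite[Lemma 3.1]{Matano_Polacik_CPDE16}, where essentially this phase-plane analysis is carried out. Most of your steps are sound: coercivity of $F$ from (MF), finiteness of the critical points from (ND) together with (MF), the $C^1$ implicit-function/flow-box arguments (which indeed survive $f$ being only Lipschitz, since $H$ is still $C^1$), the endpoint classification $f(p)<0<f(q)$, the center case $p=q$, and the decomposition in part (ii).

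There is, however, one genuine gap, and it sits at the central claim of (i). After placing $\Sigma$ inside a single critical level set $H^{-1}(c)$, you write ``fix the component $[p,q]$ relevant to $\Sigma$'' and then assert the normal form. What you have established at that point is only the inclusion $\Sigma\subset \Gamma_{[p,q]}:=\{(u,v):u\in[p,q],\ v=\pm\sqrt{2(c-F(u))}\}$; the \emph{equality} is the actual content of the lemma and does not follow from connectedness alone, because a priori part of $\Gamma_{[p,q]}$ could lie in $\cP_0$, in which case $\Sigma$ would be a proper closed arc of $\Gamma_{[p,q]}$. What is missing is the dichotomy: a connected component $\Gamma$ of a critical level set is either a single nonstationary periodic orbit or disjoint from $\cP_0$. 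To prove it, suppose a nonstationary periodic orbit $\cO$ meets $\Gamma$; then $\cO\subset\Gamma$ (since $\cO$ is connected and $H$ is constant on orbits), and by \eqref{periodicorbits} $\cO$ meets the $u$-axis at exactly two points $(a,0),(b,0)$ with $F(a)=F(b)=c$, $F<c$ on $(a,b)$, and $f(a),f(b)\ne 0$; hence $F>c$ immediately outside $[a,b]$, so $[a,b]$ is a full component of $\{F\le c\}$, forcing $[a,b]=[p,q]$ and $\cO=\Gamma$, which then contains no equilibrium. Contrapositively, since your $\Gamma_{[p,q]}$ contains the nonempty set $\Sigma\subset\R^2\setminus\cP_0$, it satisfies $\Gamma_{[p,q]}\cap\cP_0=\emptyset$, and maximality of $\Sigma$ as a component gives $\Sigma=\Gamma_{[p,q]}$. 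A second, smaller imprecision: your step ``bounded nonperiodic orbit $\Rightarrow$ $\alpha$- and $\omega$-limits are equilibria'' is not literally what Poincar\'e--Bendixson gives (the limit set could be a graphic, i.e.\ a cycle of equilibria and connecting orbits); to get homoclinicity, observe instead that the forward orbit through $(p,0)$ follows the graph $v=+\sqrt{2(c-F(u))}$ with $u$ strictly increasing up to the first interior equilibrium of $\Sigma$, hence converges to that single point, and then your reversibility argument correctly identifies the $\alpha$-limit with the same point.
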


We define a \emph{chain}  as  any connected component of the set
$\R^2\setminus \cP_0$.
Each chain consists of equilibria, homoclinic orbits, and,
possibly, heteroclinic orbits of \eqref{eq:sys}. 
We say that a chain is \emph{trivial} if 
    it consists of a single equilibrium.
    By a \emph{loop} we mean a set $\La\subset \R^2$ which is either the
    union of a homoclinic orbit and its limit equilibrium or the
    union of two heteroclinic orbits, one reflection of the other
    around the $u$ axis,  and their common limit
    equilibria. Obviously, every loop $\La$ is contained in a chain and it
    can be viewed as a Jordan curve in $\R^2$. We denote
    by $\mathcal{I}(\La)$ the interior of $\La$ (the bounded connected 
    component of $\R^2\setminus \La$).
    Similarly we define
    $\mathcal{I}(\cO)$ when $\mathcal{O}$ is a nonstationary periodic
    orbit of \eqref{eq:sys}. If $\Sigma$ is a chain,
    $\mathcal{I}(\Sigma)$ denotes the union of the interiors of the loops
    contained in $\Sigma$. We also define
${\overline{\mathcal{I}}(\Sigma)}=\mathcal{I}(\Sigma)\cup\Sigma.$ The
set ${\overline{\mathcal{I}}(\Sigma)}$ is closed and equal to the
closure of $\mathcal{I}(\Sigma)$, except when $\Sigma$ consists of
a single point, in which case  $\overline{\mathcal{I}}(\Sigma)=\Sigma$. For a
nonstationary periodic orbit $\mathcal{O}$ of \eqref{eq:sys},
${\overline{\mathcal{I}}(\mathcal{O})}$ denotes the closure of  
    $\mathcal{I}(\cO)$.

The following lemma introduces the inner chain and
the outer loop associated with a connected component of
$\mathcal{P}_0$ (see Figure \ref{inandout}). The lemma is
identical with \cite[Lemma 2.2]{p-Pauthier2}. 
 
 \begin{lemma}\label{le:p0}
   Let $\Pi$ be any connected component of $\mathcal{P}_0.$ The
   following statements hold true.
   \begin{enumerate}
   \item[(i)] The set $\Pi$ is open.
   \item[(ii)] There exists a unique chain $\Sigma_{in}$ such that for
     all periodic orbits $\mathcal{O}\subset\Pi$ one has
     \begin{equation*}
       {\overline{\mathcal{I}}\left(\Sigma_{in}\right)}\subset\mathcal{I}(\mathcal{O})\textrm{ and }\mathcal{I}(\mathcal{O})\setminus {\overline{\mathcal{I}}(\Sigma_{in})}\subset\Pi.
     \end{equation*}
   \item[(iii)] If $\Pi$ is bounded, there exists a unique loop
     $\Lambda_{out}$ such that for all periodic orbits
     $\mathcal{O}\subset\Pi$ one has
     \begin{equation*}
       \ol{\mathcal{I}}(\mathcal{O})\subset
       \mathcal{I}(\Lambda_{out}),\textrm{ and }
       \mathcal{I}(\Lambda_{out})\setminus
       {\overline{\mathcal{I}}(\mathcal{O})}\subset\Pi.    
     \end{equation*}
   \item[(iv)] There is a zero $\be$ of $f$ such that $f'(\be)>0$ and
     $(\beta,0)\in\mathcal{I}(\mathcal{O}),$ for all periodic orbits
     $\mathcal{O}\subset\Pi.$
   \item[(v)] If $\mathcal{O}_1,\mathcal{O}_2$ are two distinct
     periodic orbits contained in $\Pi,$ then either
     $\mathcal{O}_1\subset\mathcal{I}\left(\mathcal{O}_2\right)$ or
     $\mathcal{O}_2\subset\mathcal{I}\left(\mathcal{O}_1\right)$
     (thus, $\Pi$ is totally ordered by this relation).
   \end{enumerate}
 \end{lemma}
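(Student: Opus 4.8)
The plan is to read everything off the phase portrait of the Hamiltonian system \eqref{eq:sys} through its energy $H$, exploiting that along any nonstationary periodic orbit $\cO$ one has $\grad H=(f(u),v)\ne(0,0)$, since such an orbit carries no equilibrium.

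First I would prove that $\mathcal{P}_0$ is open, which gives (i). Because $\grad H\ne0$ on the compact Jordan curve $\cO$, a tubular neighborhood argument (the gradient flow trivializes the level sets near $\cO$) shows that for every energy close to $H|_\cO$ the corresponding level component is again a Jordan curve free of equilibria, i.e.\ a nonstationary periodic orbit; hence a whole neighborhood of $\cO$ lies in $\mathcal{P}_0$, and $\Pi$, being a connected component, is open. The same tubular structure shows that the orbit $\cO(z)$ through a point $z\in\Pi$ depends continuously, in the Hausdorff metric, on $z$.

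Next I would establish the nested structure. For $z\in\Pi$ let $S(z)\subset\cE$ denote the set of equilibria lying in $\mathcal{I}(\cO(z))$. Since no equilibrium lies on any periodic orbit, an equilibrium cannot cross $\cO(z)$ as $z$ varies, so by the Hausdorff continuity of $z\mapsto\cO(z)$ and the Jordan curve theorem the finite set $S(z)$ is locally constant, hence constant, $S(z)\equiv S$, on the connected set $\Pi$. A planar periodic orbit encloses equilibria of total index $+1$; under (ND) every equilibrium of \eqref{eq:sys} is a center ($f'>0$, index $+1$) or a saddle ($f'<0$, index $-1$), so $S$ contains at least one center $(\beta,0)$ with $f'(\beta)>0$, which is (iv). For (v), two distinct orbits $\cO_1,\cO_2\subset\Pi$ are disjoint Jordan curves both enclosing $(\beta,0)$, and disjoint curves with a common interior point are nested. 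Finally, for nested $\cO\subset\mathcal{I}(\cO')$ in $\Pi$ the open annulus $A$ between them contains no equilibrium, as $S$ is common to $\cO$ and $\cO'$; hence $\grad H\ne0$ on $\overline A$, so $H$ has no critical point there and is strictly monotone from the inner to the outer boundary circle, while each $H$-level inside $A$ is a Jordan curve without equilibria, i.e.\ a periodic orbit of $\Pi$. Thus $A\subset\Pi$ and nested orbits have strictly ordered energies.

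With this in hand, $H(\Pi)$ is an open interval $(c_-,c_+)$, each $c$ is realized by a unique orbit $\cO_c\subset\Pi$, and $\cO_c\subset\mathcal{I}(\cO_{c'})$ for $c<c'$. For (ii) I would set $\overline{\mathcal{I}}(\Sigma_{in}):=\bigcap_{c}\overline{\mathcal{I}}(\cO_c)$, a decreasing intersection of nested continua, hence a nonempty compact connected set disjoint from $\Pi$ (a point $w\in\Pi$ lies on some $\cO_{c_0}$, hence outside $\overline{\mathcal{I}}(\cO_c)$ for $c<c_0$); its topological boundary is then a connected component $\Sigma_{in}$ of $\R^2\setminus\mathcal{P}_0$, i.e.\ a chain, by Lemma \ref{MatPolLemma}, and this characterization forces uniqueness. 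The inclusions are immediate: $\overline{\mathcal{I}}(\Sigma_{in})\subset\mathcal{I}(\cO)$ for every orbit $\cO\subset\Pi$, and $\mathcal{I}(\cO)\setminus\overline{\mathcal{I}}(\Sigma_{in})$ is swept out by the orbits $\cO_{c'}$ of energy below that of $\cO$, all in $\Pi$. For (iii), assume $\Pi$ bounded; then $c_+<\infty$ and the $\cO_c$ converge as $c\uparrow c_+$ to a compact component of $\{H=c_+\}$, which must carry an equilibrium, for otherwise the monotone family could be continued past $c_+$ inside $\mathcal{P}_0$, contradicting $c_+=\sup_\Pi H$. By Lemma \ref{MatPolLemma} the outer boundary of $\bigcup_c\mathcal{I}(\cO_c)$ is a single loop $\Lambda_{out}$ (a homoclinic orbit with its limit equilibrium, or two mirror heteroclinic orbits with their equilibria), necessarily unique, and the stated inclusions follow exactly as for $\Sigma_{in}$. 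The step I expect to be the main obstacle is the nested-foliation claim of the third paragraph: making rigorous the Hausdorff continuity of $z\mapsto\cO(z)$ and the resulting local constancy of $S(z)$, and identifying the inner and outer limit sets as genuine chains and loops in the sense of Lemma \ref{MatPolLemma}, so that $\Sigma_{in}$ and $\Lambda_{out}$ are well defined and unique; once the family is known to be a strictly energy-monotone nested foliation with common enclosed equilibria, parts (ii)--(v) are bookkeeping with the Jordan curve theorem and that classification.
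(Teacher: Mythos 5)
A preliminary remark on the comparison itself: this paper contains no proof of Lemma \ref{le:p0} to compare against --- the lemma is imported verbatim from Part II, with only the citation \cite[Lemma 2.2]{p-Pauthier2} offered in its place. So your proposal can only be judged on its own merits.

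On the merits, your architecture is sound and several steps are complete and correct as written: openness of $\mathcal{P}_0$ (hence of $\Pi$) from $\grad H\ne 0$ along nonstationary periodic orbits; constancy on $\Pi$ of the (finite, by (ND) and (MF)) set $S$ of enclosed equilibria via locally constant winding numbers; the Poincar\'e index count for (iv) --- the index of an equilibrium $(a,0)$ of \eqref{eq:sys} is $\sgn f'(a)$ and the total inside a periodic orbit is $+1$, so a center is enclosed; the Jordan-curve nesting argument for (v); and the annulus argument showing that the region between nested orbits of $\Pi$ is foliated by periodic orbits of $\Pi$ with pairwise distinct energies. I found no step that is false. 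The genuine gap is exactly the one you flag, and it is more than bookkeeping, because it is where the existence halves of (ii) and (iii) actually live: the limit sets must be identified with a chain and a loop \emph{in the sense of Section \ref{stst}}, so that the asserted inclusions concern $\overline{\mathcal{I}}(\Sigma_{in})$ and $\mathcal{I}(\Lambda_{out})$ as defined there, not the sets you introduce by fiat. Concretely, for (ii) you declare $\overline{\mathcal{I}}(\Sigma_{in}):=K:=\bigcap_c\overline{\mathcal{I}}(\mathcal{O}_c)$; what has to be proved is (a) $\partial K\cap\mathcal{P}_0=\emptyset$ (if $\partial K$ met an orbit $\Gamma$ of another component $\Pi'$, then $\dist(\Gamma,\Pi)>0$ would force an entire neighborhood of $\Gamma$ into every $\mathcal{I}(\mathcal{O}_c)$, hence $\Gamma\subset\interior K$, a contradiction), (b) $\partial K$ is connected (e.g.\ unicoherence of the sphere, using that $\R^2\setminus K$ is an increasing union of exteriors of Jordan curves), and (c) $K\subset\overline{\mathcal{I}}(\Sigma)$ for the chain $\Sigma\supset\partial K$; this last point is where Lemma \ref{MatPolLemma} genuinely enters: by the graph description, $\R^2\setminus\overline{\mathcal{I}}(\Sigma)$ is connected and unbounded, so a point of $\interior(K)\setminus\overline{\mathcal{I}}(\Sigma)$ could be joined to infinity without meeting $\Sigma\supset\partial K$, which is impossible. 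For (iii), to place the orbits inside a single loop of the level-$c_+$ chain $\Sigma_{out}$, rather than in its unbounded complementary component, you need the strict inequality $H<c$ on $\mathcal{I}(\mathcal{O}_c)$, which is precisely what \eqref{periodicorbits} provides: then $\overline{\mathcal{I}}(\mathcal{O}_c)\subset\{H\le c\}$ avoids $\Sigma_{out}\subset\{H=c_+\}$, hence lies in one component of $\R^2\setminus\Sigma_{out}$, and boundedness of $W:=\bigcup_c\mathcal{I}(\mathcal{O}_c)$ together with $\partial W\subset\Sigma_{out}$ excludes the unbounded component and in fact yields $W=\mathcal{I}(\Lambda_{out})$, from which both inclusions and uniqueness follow. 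With these insertions your outline closes into a complete proof; as submitted, (ii) and (iii) are not yet established.
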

 We refer to $\Sigma_{in}$ and $\La_{out}$ as the \emph{inner chain
   and outer loop} associated with $\Pi$. If the correspondence to
 $\Pi$ is to be explicitly indicated, we denote them by
 $\Sigma_{in}(\Pi)$ and $\La_{out}(\Pi)$, respectively.

\begin{figure}[h]
  \centering
  \includegraphics[scale=.6]{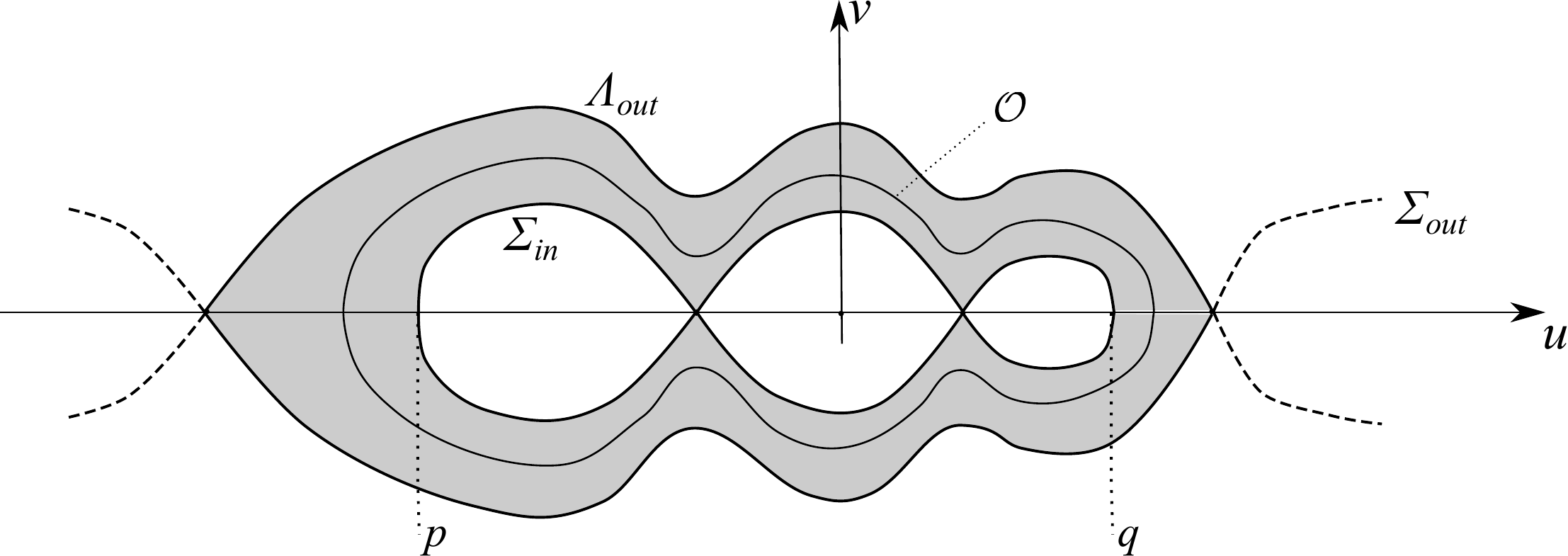}
  \caption[The inner chain and outer loop]{The inner chain and outer loop
    associated with a connected component $\Pi$ of $\cP_0$:
    $\La_{out}$ and $\Sigma_{in}$ form
    the boundary of $\Pi$.  The outer loop can be a heteroclinic loop
    (as in this figure) or a homoclinic loop, and it is part of a
    chain $\Sigma_{out}$. The points $p$ and $q$ are as in Lemma
    \ref{MatPolLemma} for $\Sigma=\Sigma_{in}$.
    \label{inandout}}
\end{figure}

Below, the connected
component of $\mathcal{P}_0$ whose closure contains $(0,0)$ will play
a prominent role. We denote it by  $\Pi_0$.  Note that
$\Pi_0$ is well defined, for $f'(0)>0$ implies that $(0,0)$ is a
center for \eqref{eq:sys}, which is to say that it  has a neighborhood
foliated by periodic orbits. 
 
\begin{figure}[ht]
  \includegraphics[scale=.7]{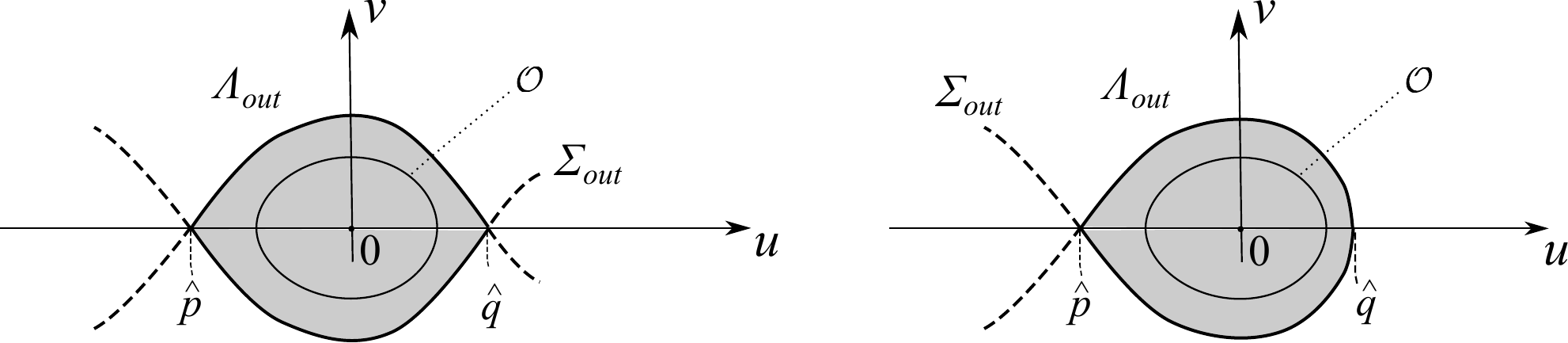}
 \centering
  \caption[Component $\Pi_0$]{The shaded region indicates the
    connected component $\Pi_0$
    containing the point $(0,0)$. The corresponding outer
    loop $\La_{out}$ is a heteroclinic loop in the figure on the left
    and a homoclinic loop in the figure of the right. Points
    $\hat p$ and $\hat q$ indicate the intersections of  $\La_{out}$
    with the $u$-axis.
    The inner chain is trivial: $\Sigma_{in}=\{(0,0)\}$.    
    \label{pi0}}
\end{figure}

\subsection{A key result on spatial trajectories of  entire solutions}
\label{sptraj} 

In this subsection, we introduce
spatial trajectories of entire solutions of \eqref{eq:1}. As we
explain, Theorem \ref{thm:1} follows from a result on
entire solutions stated in Proposition \ref{prop:missing} below.

 For any $\varphi \in C^1(\R)$,  we define
\begin{equation}
  \label{eq:straj}
  \tau(\vp):=\left\{ \left( \vp(x),\vp_x(x)\right):x\in\R\right\} 
\end{equation}
and refer to this set as the \textit{spatial trajectory (or orbit)} of
$\vp.$  Note that if $\vp$ is a steady state of \eqref{eq:1}, then $\tau(\vp)$
is the usual trajectory of the solution $(\varphi,\varphi_x)$ of the
planar system \eqref{eq:sys}. If $U$ is an
entire solution of \eqref{eq:1}; we refer to the
collection $\tau(U(\cdot,t))$, $t\in\R$, as the spatial trajectories
of $U$. 

If $Y\subset C^1(\R)$, $\tau(Y)\subset \R^2$ is the union of the spatial
trajectories of the functions in $Y$:
\begin{equation}
  \label{eq:2}
  \tau(Y):=\left\{ \left( \vp(x),\vp_x(x)\right):x\in\R,\,\varphi\in Y\right\}. 
\end{equation}

Assume now that $u_0\in C_b(\R)$, $u_0(\pm \infty)=0$
(recalling that relations \eqref{eq:17} are assumed to hold),
and the solution $u$ of \eqref{eq:1},
\eqref{ic:1} is bounded. For a description of $\om(u)$,
some results  relating the spatial trajectories
of  entire solutions of
\eqref{eq:1} to the chains of \eqref{eq:sys} are crucial. 
If one can prove that, for an entire solution $U$,  
the spatial trajectories $\tau(U(\cdot,t))$, $t\in\R$, are all
contained in a chain, then  a unique-continuation type result
shows that $U$ is a steady state of \eqref{eq:1}
(see Lemma \ref{le:2.7} below).
Thus, the  quasiconvergence of  $u$ can be proved
by showing that  $\tau(\om(u))$ is contained in a chain.  
We now explain a key idea of how this can be accomplished.

Let us first scrutinize the possibility that
for some entire solution $U$ with $U(\cdot,t)\in\om(u)$
a spatial trajectory $\tau(U(\cdot,t_0))$
is not contained in any chain for some $t_0\in\R$. 
It was proved in \cite[Proposition 3.2]{p-Pauthier2} that then
none of the trajectories $\tau(U(\cdot,t))$, $t\in\R,$ can intersect any chain. 
 This clearly implies that there is a connected component $\Pi$ of
$\mathcal{P}_0$ such that 
\begin{equation}
      \label{eq:5}
      \underset{t\in\R}{{\textstyle \bigcup}}\tau\left(
        U(\cdot,t)\right)\subset\Pi.
    \end{equation}
    The connected component $\Pi$  has to be bounded as also shown in
    \cite{p-Pauthier2}.

Trying to rule  \eqref{eq:5} out,  we look for a contradiction.  
We consider the $\om$ and $\al$-limit sets of the entire solution 
$U$, denoted by $\om(U)$, $\al(U)$, respectively;
$\om(U)$ is defined as in \eqref{defomega} and the definition of
$\al(U)$ is analogous, with $t_n\to\infty$ replaced by
$t_n\to-\infty$. 
Take the inner chain $\Sigma_{in}(\Pi)$ 
    and the outer loop  $\Lambda_{out}(\Pi)$ associated
    with $\Pi$, as in Lemma \ref{le:p0}.
As in   \cite[Section 6]{p-Pauthier2}, a contradiction is obtained 
if the following relations can be derived from \eqref{eq:5}: 
    \begin{equation}
      \label{eq:6}
      \tau\left(\alpha(U)\right)\subset\Sigma_{in}(\Pi),\qquad \tau\left(\omega(U)\right)\subset\Lambda_{out}(\Pi).
    \end{equation}
The reason why \eqref{eq:6} leads to a contradiction can intuitively be
explained as follows.  Relations \eqref{eq:6} show that there is 
a specific    ``direction'' of the flow of \eqref{eq:1}
in $\om(u)$: 
 $U(\cdot,t)$ always goes \emph{from the inner chain to
the outer loop} as $t$ increases from $-\infty$ to $\infty$.
However, the existence of such a flow
direction is inconsistent with well-known chain-recurrence properties of
the $\omega$-limit sets and thus the contradiction
(see   \cite[Section 6]{p-Pauthier2} for details).

Under the assumptions (ND), (MF), and \eqref{eq:17},  it has been
proved in \cite{p-Pauthier2} that relations \eqref{eq:6}
do follow from \eqref{eq:5} for any connected component $\Pi$ of
$\cP_0$, with the notable exception of $\Pi=\Pi_0$. Recall that $\Pi_0$ is the
connected component whose closure contains $(0,0)$;  in this case,
$\Sigma_{in}(\Pi_0)$ is the trivial chain $\{(0,0)\}$.
As noted in
\cite[Remark 6.3]{p-Pauthier2}, the lack of \eqref{eq:6} in the case
$\Pi=\Pi_0$ was the only reason why we could not give
a general quasiconvergence theorem in the case $u_0(\pm \infty)=0$ and
$f(0)=0<f'(0)$. In the present paper, we provide the proof of
\eqref{eq:6} in the case $\Pi=\Pi_0$ and thereby prove Theorem \ref{thm:1}.

For reference, we state here the result which implies Theorem \ref{thm:1},
as explained above.    
\begin{prop}\label{prop:missing} Assuming 
 \emph{ (ND), (MF)}, and \eqref{eq:17}, let $u_0\in \cV$ be
  a function satisfying 
  $u_0(\pm\infty)=0$ such that the solution $u$ of \eqref{eq:1},
  \eqref{ic:1} is bounded and \emph{(NC)} holds.
  Let $U$ be an entire solution of
  \eqref{eq:1} such that $U(\cdot,t)\in 
  \om(u)$ for all $t\in\R$. If  \eqref{eq:5} holds with $\Pi=\Pi_0$,
  then 
   \begin{equation}
      \label{eq:6a}
      \alpha(U)=\{0\},\qquad
      \tau\left(\omega(U)\right)\subset\Lambda_{out}(\Pi_0). 
    \end{equation}  
  \end{prop}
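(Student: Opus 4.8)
The plan is to exploit condition (NC) in the one place where the argument of \cite{p-Pauthier2} broke down, namely to obtain a uniform bound on the zero number of $U_x$, and then to feed this bound into the phase-plane analysis of the annular region $\Pi_0$. The very first step is to record that (NC) yields a constant $N\in\N$ with $z\big(U_x(\cdot,t)\big)\le N$ for every $t\in\R$, where $z$ denotes the number of sign changes. Indeed, each slice $U(\cdot,t)$ is an $L^\infty_{loc}$-limit $u(\cdot,t_n)\to U(\cdot,t)$ with $t_n\to\infty$; since (NC) and the monotonicity of the zero number give $z(u_x(\cdot,s))\le N$ for all large $s$, and since sign changes cannot be created in a $C^1_{loc}$ limit, we get $z(U_x(\cdot,t))\le N$ for each fixed $t$, hence uniformly in $t$. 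This uniform bound is the sole new ingredient compared with \cite{p-Pauthier2}, and everything else is arranged around it.

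Second, I would translate the zero-number bound into rigid information about the spatial ends. Because $U_x(\cdot,t)$ changes sign at most $N$ times, $U(\cdot,t)$ is monotone on two half-lines $(-\infty,-M)$ and $(M,\infty)$; being bounded it therefore has limits $\ell_\pm(t):=U(\pm\infty,t)$, and parabolic estimates force $U_x(x,t)\to0$ as $x\to\pm\infty$. A standard translation argument (shifting $U$ by $x_n\to\pm\infty$) shows that $t\mapsto\ell_\pm(t)$ is a bounded entire orbit of the scalar equation $\dot\xi=f(\xi)$, with values confined to the interval $[\hp,\hq]$ cut out on the $u$-axis by $\Lao(\Pi_0)$. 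Consequently $(\ell_\pm(t),0)\in\overline{\Pi_0}\cap\{v=0\}$, and, as $t\to\pm\infty$, $\ell_\pm(t)$ tends to a zero of $f$; under (ND) this is either the center $0$ or one of the equilibria sitting on $\Lao(\Pi_0)$. I would package the resulting description of admissible entire solutions as the classification statement Proposition \ref{prop:entire}, which is what makes the two conclusions of \eqref{eq:6a} accessible.

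Third comes the outer conclusion $\tau\big(\om(U)\big)\subset\Lao(\Pi_0)$. Here I would compare $U(\cdot,t)$ with the one-parameter family of periodic steady states foliating $\Pi_0$ (parametrized by the value of the Hamiltonian $H=\tfrac12 v^2+F(u)$) and with their $x$-translates. The finiteness of the zero number makes each such intersection count well defined and, by the standard Sturm machinery, nonincreasing in $t$; this monotonicity records the ``annular level'' occupied by the trajectory and shows that, as $t\to+\infty$, $\tau(U(\cdot,t))$ is squeezed against the outer boundary of $\Pi_0$. Once the trajectories of $\om(U)$ are trapped on the loop $\Lao(\Pi_0)$, the outer half of \eqref{eq:6a} follows; this part reuses the chain-recurrence scheme of \cite[Section 6]{p-Pauthier2} applied to the nontrivial chain carrying the outer loop.

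The main obstacle is the inner conclusion $\al(U)=\{0\}$, equivalently $\tau(\al(U))\subset\Sii(\Pi_0)=\{(0,0)\}$, which is exactly the relation that could not be proved in \cite{p-Pauthier2} precisely because the inner chain degenerates to a single \emph{unstable} (rotational) equilibrium. The phase-plane monotonicity that disposes of a nontrivial inner chain is simply unavailable at a center, so I would argue by contradiction: if $\al(U)$ contained a function $\not\equiv0$, then by invariance and the chain-transitivity of $\al(U)$ the backward trajectory would have to linger near, or wind around, a periodic orbit $\cO\subset\Pi_0$ or the loop. Using the same comparison with the foliating periodic steady states as in the third step, I would show that any such backward recurrence forces $z(U_x(\cdot,t))$ to exceed $N$ for $t$ very negative, contradicting the uniform bound from the first step; the only backward limit compatible with a bounded zero number is convergence to the center. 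Turning ``failure to converge to the center as $t\to-\infty$'' into ``unbounded oscillation of $U_x$'' is the delicate heart of the proof and the reason (NC) is both necessary and, as the examples of \cite{P:examples,P:unbal} show, essentially optimal.
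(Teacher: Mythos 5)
Your opening moves match the paper's: Lemma \ref{le:entirein0} likewise uses (NC) (via Lemma \ref{le:robust} and Sturm monotonicity) to get uniform bounds on $z(U(\cdot,t))$, $z(U_x(\cdot,t))$, simplicity of zeros, and the spatial limits $\Theta_\pm(t)$, and the proposition is likewise reduced to a classification statement (Proposition \ref{prop:entire}). The gaps are in how you derive the two conclusions of \eqref{eq:6a}. For the outer conclusion, your Sturm counts $z(U(\cdot,t)-\psi)$ against the periodic steady states $\psi$ foliating $\Pi_0$ are \emph{infinite}, not ``well defined,'' exactly in the new cases this paper has to handle: every such $\psi$ satisfies $\min\psi<0<\max\psi$, so whenever $\Theta_+\equiv 0$ or $\Theta_-\equiv 0$ (cases (T2) and (T3) of Section \ref{sec:entire}), the function $U(\cdot,t)-\psi$ changes sign infinitely often near spatial infinity for every $t$. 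The paper can run that intersection argument only in case (T1), where both ends of $U$ stay near the outer equilibria (Lemma \ref{le:conv}(iii)(b)). Moreover, monotonicity of intersection numbers carries no orientation by itself: it does not distinguish ``squeezed against $\Lao$'' from ``collapsing onto the center,'' and ruling out $\om(U)=\{0\}$ is a substantial separate argument (the contradiction with \eqref{eq:40} in Lemma \ref{le:finalc2a2}, which needs the relations \eqref{eq:341-42}--\eqref{eq:341-43} from Lemma \ref{le:BPQ}, the simple-curve property (ciii) through Lemma \ref{le:z1}, and a localized two-sided comparison with a periodic orbit; cases (T1) and (T3) instead use Fife--McLeod, the threshold theorem of \cite{Matano_Polacik_CPDE16}, Theorem \ref{thmPP1}, and comparison with threshold solutions from \cite{P:unbal}).

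The more serious gap is in the inner conclusion. The uniform bound $z(U_x(\cdot,t))\le N$ excludes from $\al(U)$ only the \emph{nonconstant periodic} steady states, since only those have infinitely many critical points. It cannot exclude the steady states whose trajectories lie on $\Lao$ itself: the constants $\ga$, $\ga_\pm$, the shifts of the ground state $\Phi$, and the shifts of the standing waves $\Phi^\pm$ have at most one critical point, and a backward trajectory approaching the outer loop need not ``wind'' at all --- its spatial trajectory can hug $\Lao$ once, producing no growth of $z(U_x)$. So your contradiction mechanism is silent on exactly the dangerous alternative: an entire solution connecting $\Lao$ (as $t\to-\infty$) to the center (as $t\to+\infty$). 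Excluding this is the real content of $\al(U)=\{0\}$, and the paper does it by comparison arguments, not zero counting: Lemma \ref{le:alpha} first removes all $\Lao$-steady states from $\al(U)$ by squeezing $U$ under translates of $\Phi$ (limits of large-period periodic orbits), then removes every nonzero $\varphi$ by trapping $\tilde U$ under small periodic orbits near the center and invoking Theorem \ref{thm:GS}; its hypotheses (a1)--(a4) on the behavior of $U$ as $t\to-\infty$ are in turn verified case by case using Lemma \ref{le:BPQ} to bound the rightmost zero $\xi(t)$. None of that machinery is supplied, or replaced, by the zero-number blow-up you propose.
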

  Of course, $ \alpha(U)=\{0\}$ is equivalent to
  $\tau\left(\alpha(U)\right)=\{(0,0)\}$,
  so \eqref{eq:6} and \eqref{eq:6a} are the same
  statements when $\Pi=\Pi_0$ (and $\Sigma_{in}(\Pi_0)=\{(0,0)\}$).

  The case $\Pi=\Pi_0$
  differs from the case when
  $\Sigma_{in}(\Pi)$ is a nontrivial chain
  in several aspects. One key difference is
that we need to take into account
the possibility that the spatial limits $U(\pm\infty,t)$
of the entire solution $U$ depend on $t$ (this
can be ruled out easily if $\Sigma_{in}(\Pi)$ is a nontrivial chain,
see \cite[Lemma 3.9]{p-Pauthier2}). Even  
when $U(\pm\infty,t)$ are independent of $t$,
the case when one or both of them is equal to 0,
an unstable equilibrium of $\dot\xi=f(\xi)$, is not encountered in the case  
$\Pi\ne \Pi_0$ (the limits $U(\pm\infty,t)$ are always equal to a
stable equilibrium of $\dot\xi=f(\xi)$ if $\Sigma_{in}(\Pi)$
is a nontrivial chain).
On the other hand, 
assumption (NC) has some consequences on the structure of relevant
entire solutions (see Section \ref{sec:sptraj}), which we exploit
in the proof of \eqref{eq:6}.

\section{Preliminaries}\label{prelims}
In this section, we first recall basic  properties of the
zero-number functional and 
various limit sets of bounded solutions of \eqref{eq:1} and then state
some  results from earlier paper that will be referred to in the proof of
Proposition \ref{prop:missing}.

\subsection{Zero number}\label{sub:zero}

Consider a linear parabolic
equation
\begin{equation}\label{eq:lin}
  v_t=v_{xx}+c(x,t)v,\qquad x\in\R,\ t\in\left( s,T\right),
\end{equation}
where $-\infty\leq s<T\leq \infty$ and $c$ is a bounded measurable
function. Note that whenever
$u$, $\bar u$ are bounded solutions of
\eqref{eq:1},  their difference $v=u-\bar u$ satisfies
\eqref{eq:lin} with a suitable function $c$.  Similarly, $v=u_x$ and
$v=u_t$ are solutions of such a linear equation.

For an interval $I=(a,b),$ with $-\infty\leq a < b\leq \infty,$ we
denote by $z_I(v(\cdot,t))$ the number, possibly infinite, of zeros
$x\in I$ (counted without their multiplicities)
of the function $x\mapsto v(x,t).$ If $I=\R$ we usually omit
the subscript $\R$:
$$
z(v(\cdot,t)):=z_\R(v(\cdot,t)).
$$
The following intersection-comparison principle holds (see
\cite{Angenent_Crelle88,Chen_MathAnn98}).
\begin{lemma}\label{le:zero}
  Let $v$ be a nontrivial solution of \eqref{eq:lin} and
  $I=(a,b),$ with $-\infty\leq a < b\leq \infty.$ Assume that the
  following conditions are satisfied:
  \begin{itemize}
  \item if $b<\infty,$ then $v(b,t)\neq0$ for all
    $t\in\left( s,T\right),$
  \item if $a>-\infty,$ then $v(a,t)\neq0$ for all
    $t\in\left( s,T\right).$
  \end{itemize}
  Then the following statements hold true.
  \begin{enumerate}
  \item[(i)] For each $t\in\left( s,T\right),$ all zeros of
    $v(\cdot,t)$ are isolated. In particular, if $I$ is bounded, then
    $z_I(v(\cdot,t))<\infty$ for all $t\in\left( s,T\right).$
  \item[(ii)] The function $t\mapsto z_I(v(\cdot,t))$ is monotone
    nonincreasing on $(s,T)$ with values in
    $\N\cup\{0\}\cup\{\infty\}.$
  \item[(iii)] If for some $t_0\in(s,T)$ the function $v(\cdot,t_0)$
    has a multiple zero in $I$ and $z_I(v(\cdot,t_0))<\infty,$ then
    for any $t_1,t_2\in(s,T)$ with $t_1<t_0<t_2,$ one has
    \begin{equation}\label{zerodrop}
      z_I(v(\cdot,t_1))>z_I(v(\cdot,t_0))\ge z_I(v(\cdot,t_2)).
    \end{equation}
  \end{enumerate}
\end{lemma}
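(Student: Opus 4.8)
The plan is to follow the classical Sturmian strategy of Angenent and Chen \cite{Angenent_Crelle88,Chen_MathAnn98}, whose core is a local analysis of the zero set of $v$ near an interior point $(x_0,t_0)$, combined with the endpoint hypotheses to exclude any creation of zeros. The one tool underpinning everything is a \emph{finite order of vanishing} dichotomy: at any interior point $(x_0,t_0)$ with $v\not\equiv 0$, either $v$ vanishes to infinite order there in the parabolic sense---in which case a unique-continuation argument forces $v\equiv 0$, contradicting nontriviality---or $v$ vanishes to some finite order $m\ge 0$, so that $v(x,t_0)=c\,(x-x_0)^m+o(|x-x_0|^m)$ with $c\neq 0$, together with a matched space--time profile. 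Establishing this dichotomy when $c$ is merely bounded and measurable is, I expect, the main obstacle: one cannot simply differentiate the equation, and must instead run a frequency-function (Almgren-type) or Carleman argument that is robust to rough lower-order potentials, or absorb $c$ by a transformation reducing the leading behavior to that of the constant-coefficient heat equation. I would import this step from \cite{Chen_MathAnn98}.

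Granting the dichotomy, part (i) is immediate: if $v(\cdot,t_0)$ vanished at a non-isolated point $x_0$, the spatial expansion $c\,(x-x_0)^m+o(|x-x_0|^m)$ with $c\neq 0$ would be violated, so $v$ would vanish to infinite order and hence be trivial. Thus every zero of $v(\cdot,t_0)$ is isolated; when $I$ is bounded and the endpoint hypotheses hold, the zero set is a compact subset of the \emph{open} interval $I$ with no accumulation point, hence finite, giving $z_I(v(\cdot,t_0))<\infty$.

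For parts (ii) and (iii) I would carry out the local count of zeros across $t=t_0$. Near a zero of finite order $m$, I rescale in self-similar parabolic variables; since the term $c\,v$ is of lower order after the blow-up, the leading profile solves the constant-coefficient heat equation and is the heat polynomial $\theta_m$ (with $\theta_m(x,0)=x^m$, e.g. $\theta_2=x^2+2t$, $\theta_3=x^3+6tx$). Reading off $\theta_m$, the number of zeros of $v(\cdot,t)$ in a small neighborhood of $x_0$ equals $m$ for $t<t_0$ close (all simple), equals $1$ at $t=t_0$, and is at most $m-1$ for $t>t_0$ close; for a simple zero ($m=1$) the implicit function theorem shows the zero merely persists and moves continuously, with local count $1$ on both sides. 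Summing these local contributions over the finitely many zeros in a compact subinterval of $I$ (finitely many by (i)) shows that, for $t_1<t_0<t_2$ close to $t_0$, the count cannot increase and strictly decreases whenever $v(\cdot,t_0)$ has a multiple zero in $I$. The endpoint hypotheses are exactly what prevents zeros from entering or leaving through a finite endpoint, so collision--annihilation in the interior is the only mechanism of change, and it can only lower the count.

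Finally I would globalize. Chaining the local nonincrease over $(s,T)$ yields that $t\mapsto z_I(v(\cdot,t))$ is monotone nonincreasing on all of $(s,T)$, with values in $\N\cup\{0\}\cup\{\infty\}$ (the infinite case poses no difficulty, since the local argument is unchanged and the global count is a supremum of local ones). The strict inequality in (iii) then follows because a multiple zero at $t_0$ forces a strict local drop, and once the count has dropped it cannot be regained by monotonicity; this gives the chain \eqref{zerodrop} and completes (ii) and (iii).
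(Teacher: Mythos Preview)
The paper does not supply its own proof of this lemma: it states the result and refers the reader to \cite{Angenent_Crelle88,Chen_MathAnn98}. Your proposal is a correct outline of precisely the Angenent--Chen approach contained in those references, so there is nothing to compare---your sketch \emph{is} the argument the paper is citing.
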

If \eqref{zerodrop} holds, we say that $z_I(v(\cdot,t))$ drops at
$t_0$. 

We will also use a version of Lemma \ref{le:zero} for
time-dependent intervals; it  is derived easily from Lemma \ref{le:zero}
(cp.  \cite[Section~2]{p-B-Q}).

\begin{lemma}
  \label{le:zerot}
  Let $v$ be a nontrivial solution of \eqref{eq:lin} and
  $I(t)=(a(t),b(t))$, where $-\infty \le a(t)<b(t)\le \infty$ for
  $t\in(s,T)$.  Assume that the following conditions are satisfied:
  \begin{itemize}
  \item[ {\rm (c1)}] Either $b\equiv \infty$ or $b$ is a (finite)
    continuous function on (s,T).  In the latter case,
    $v(b(t),t)\ne 0$ for all $t\in(s,T)$.
  \item[\rm (c2)] Either $a\equiv -\infty$ or $a$ is a continuous
    function on (s,T).  In the latter case, $v(a(t),t)\ne 0$ for all
    $t\in(s,T)$.
  \end{itemize}
  Then statements (i), (ii) of Lemma \ref{le:zero} are valid with $I$,
  $a$, $b$ replaced by $I(t)$, $a(t)$, $b(t)$, respectively; and
  statement (iii) of Lemma \ref{le:zero} is valid with all occurrences
  of $z_I(v(\codt,t_j))$, $j=0,1,2$, replaced by
  $z_{I(t_j)}(v(\codt,t_j))$, $j=0,1,2$, respectively.
\end{lemma}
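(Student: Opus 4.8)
The plan is to reduce both the monotonicity and the drop statement to their fixed-interval counterparts in Lemma \ref{le:zero} by a \emph{time-localization} argument, freezing the moving endpoints on short time spans. The mechanism behind the lemma is that conditions (c1), (c2) forbid zeros from crossing the moving boundary: since $v$ is continuous and does not vanish on the boundary curves $\{(a(t),t)\}$, $\{(b(t),t)\}$, it stays bounded away from zero on a whole tube around each of them. The first step is therefore, given a compact time interval $[t_1,t_2]\subset(s,T)$ and assuming $b\not\equiv\infty$, to use compactness of $\{(b(t),t):t\in[t_1,t_2]\}$ together with the continuity of $v$ and of $b$ to produce $\delta>0$ such that $v(x,t)\neq0$ whenever $|x-b(t)|\le\delta$ and $t\in[t_1,t_2]$, and symmetrically near $a$ when $a\not\equiv-\infty$. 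I emphasize that no differentiability of $a$, $b$ is needed here, which is precisely why the hypotheses ask only for continuity.

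The second step freezes the endpoints. By uniform continuity of $a$, $b$ on $[t_1,t_2]$ (shrinking $\delta$ below the positive minimum of $b-a$ if both are finite), I partition $[t_1,t_2]$ into finitely many slabs $[\sigma,\sigma']$ on each of which $a$ and $b$ oscillate by less than $\delta$. On such a slab I set $x_a^*:=a(\sigma)$ and $x_b^*:=b(\sigma)$, with the conventions $x_a^*:=-\infty$, $x_b^*:=\infty$ in the unbounded cases. For every $t\in[\sigma,\sigma']$ the constants $x_a^*$, $x_b^*$ then lie in the respective zero-free tubes, so $v(x_a^*,t)\neq0\neq v(x_b^*,t)$, and the segment joining $x_b^*$ to $b(t)$ (likewise $x_a^*$ to $a(t)$) is contained in a tube and hence carries no zero of $v(\cdot,t)$. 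Since the symmetric difference of $(a(t),b(t))$ and $(x_a^*,x_b^*)$ is covered by these zero-free segments, the two intervals contain exactly the same zeros, and I obtain the key identity $z_{I(t)}(v(\cdot,t))=z_{(x_a^*,x_b^*)}(v(\cdot,t))$ for all $t\in[\sigma,\sigma']$. Thus on each slab the moving count agrees with a \emph{fixed}-interval count whose boundary hypotheses are exactly the tube property, so Lemma \ref{le:zero} applies verbatim.

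With the identity in hand the three statements follow. For statement (i), applying Lemma \ref{le:zero}(i) on the frozen interval gives isolation of the zeros of $v(\cdot,t)$ lying in $(x_a^*,x_b^*)$, hence of those in $I(t)$; and when $I(t)$ is bounded, so is $(x_a^*,x_b^*)$, whence finiteness. For statement (ii), Lemma \ref{le:zero}(ii) makes $t\mapsto z_{(x_a^*,x_b^*)}(v(\cdot,t))$ nonincreasing on each slab (working on a slightly enlarged open time interval where the frozen endpoints stay nonvanishing), which by the identity is $t\mapsto z_{I(t)}(v(\cdot,t))$; since the value at each shared partition point $\sigma'$ is unambiguously $z_{I(\sigma')}(v(\cdot,\sigma'))$, the monotone pieces chain into monotonicity on $[t_1,t_2]$, and hence on all of $(s,T)$. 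For statement (iii), if $v(\cdot,t_0)$ has a multiple zero in $I(t_0)$ with $z_{I(t_0)}<\infty$, I localize at $t_0$: being a genuine zero, the multiple zero avoids the tubes and therefore lies in the frozen interval $(x_a^*,x_b^*)$ of the slab containing $t_0$, so Lemma \ref{le:zero}(iii) yields the strict drop for the frozen count; translating back through the identity and combining with the monotonicity of (ii) to reach arbitrary $t_1<t_0<t_2$ in $(s,T)$ gives \eqref{zerodrop} for $z_{I(t)}$.

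The only genuinely delicate point is the uniform construction of the zero-free tubes on compact time intervals and the verification that freezing the endpoints leaves the zero count unchanged; this is exactly where the continuity in (c1), (c2) and the non-vanishing of $v$ on the boundary curves are used. Once that identity is established the remainder is pure bookkeeping, so I do not anticipate any substantial obstacle beyond making the tube estimate uniform.
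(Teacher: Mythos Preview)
Your proposal is correct and is precisely the kind of derivation the paper has in mind: the paper does not give a proof but simply remarks that the lemma ``is derived easily from Lemma \ref{le:zero} (cp.\ \cite[Section~2]{p-B-Q}),'' and your time-localization / freezing argument is exactly such a reduction. The tube construction and the identity $z_{I(t)}(v(\cdot,t))=z_{(x_a^*,x_b^*)}(v(\cdot,t))$ on each slab are the right mechanism, and your handling of the chaining and of statement (iii) is sound.
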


The next lemma is a robustness result of  \cite{Du_Matano}.
\begin{lemma}\label{le:robust}
  Let $w_n(x,t)$ be a sequence of functions converging to $w(x,t)$ in
  $\displaystyle C^1\left( I\times(s,T)\right)$ where $I$ is an open
  interval. Assume that $w(x,t)$ solves a linear equation
  \eqref{eq:lin}, $w\not\equiv0$, and $w(\cdot,t)$ has a multiple zero
  $x_0\in I$ for some $t_0\in(s,T)$.  Then there exist sequences
  $x_n\to x_0$, $t_n\to t_0$ such that for all sufficiently large $n$
  the function $w_n(\cdot,t_n)$ has a multiple zero at $x_n$.
\end{lemma}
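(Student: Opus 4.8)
The plan is to recast the statement in terms of the planar map $\Phi:=(w,w_x)$ and to show that the zero of $\Phi$ at $(x_0,t_0)$ is \emph{topologically robust}, so that it survives under small perturbations of $\Phi$. Note that $w_n(\cdot,t)$ has a multiple zero at $x$ precisely when $\Phi_n(x,t):=(w_n(x,t),(w_n)_x(x,t))=(0,0)$, and that the multiple zero $x_0$ of $w(\cdot,t_0)$ means $\Phi(x_0,t_0)=(0,0)$. Since $w_n\to w$ in $C^1(I\times(s,T))$, we have $\Phi_n\to\Phi$ uniformly on compact subsets. Thus the whole question reduces to a persistence-of-zeros statement for continuous planar maps, and the only genuinely analytic input will be that $w$ solves the parabolic equation \eqref{eq:lin}, which I will use through the zero-number results of Lemma \ref{le:zero}.

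First I would localize. Because $w$ is a nontrivial solution of \eqref{eq:lin}, its zeros on each time slice are isolated (Lemma \ref{le:zero}(i)), and a multiple zero forces a strict drop of the zero number (Lemma \ref{le:zero}(iii)); since an integer-valued nonincreasing function drops only finitely often on a compact time interval, the multiple zeros of $w$ are isolated in a neighborhood of $(x_0,t_0)$. I can therefore fix a small closed box $Q=[x_0-\delta,x_0+\delta]\times[t^-,t^+]$ with: $(x_0,t_0)$ the unique zero of $\Phi$ in $Q$; $w(x_0\pm\delta,t)\neq0$ for all $t\in[t^-,t^+]$ (so $\Phi\neq0$ on the vertical sides); and $t^\pm$ chosen as regular times at which $w(\cdot,t^\pm)$ has only simple zeros in $[x_0-\delta,x_0+\delta]$ (so $\Phi\neq0$ on the horizontal sides), bracketing the single drop of the zero number that occurs at $t_0$. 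Each condition follows from continuity together with Lemma \ref{le:zero}. In particular $\Phi$ is nonvanishing on $\partial Q$, so the Brouwer degree $\deg(\Phi,Q,0)$ is well defined and equals the index of the isolated zero $(x_0,t_0)$.

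The heart of the argument, and the step I expect to be the main obstacle, is to prove that this index is \emph{nonzero}; this is exactly where the parabolic structure is indispensable. The mechanism is the strict drop across $t_0$ from Lemma \ref{le:zero}(iii): writing $J=(x_0-\delta,x_0+\delta)$, one has $z_J(w(\cdot,t^-))>z_J(w(\cdot,t^+))$. Tracking the phase $\arg(w+iw_x)$ once around $\partial Q$ ties $\deg(\Phi,Q,0)$ to this drop. Indeed, along the horizontal sides $\arg(w+iw_x)$ decreases by $\pi$ at each simple zero of $w(\cdot,t^\pm)$, since $\frac{d}{dx}\arg(w+iw_x)=(w_{xx}w-w_x^2)/(w^2+w_x^2)<0$ at such a zero, while on the vertical sides $w$ keeps a constant sign and contributes only a bounded correction; the net winding is therefore essentially $-\tfrac12\bigl(z_J(w(\cdot,t^-))-z_J(w(\cdot,t^+))\bigr)\neq0$. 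In the transparent generic case of an order-two zero one bypasses the winding count: the equation forces $w_t(x_0,t_0)=w_{xx}(x_0,t_0)\neq0$, whence $\det D\Phi(x_0,t_0)=-w_tw_{xx}=-w_{xx}^2\neq0$, so $(x_0,t_0)$ is nondegenerate with index $-1$. For zeros of higher order I would invoke Angenent's local parabolic normal form \cite{Angenent_Crelle88,Chen_MathAnn98} to guarantee that the bounded corrections cannot cancel the leading contribution; this is the delicate part of the computation and the point at which one really leans on \cite{Du_Matano}.

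Finally I would conclude by stability of the degree. Since $\Phi_n\to\Phi$ uniformly on the compact set $\partial Q$ and $\mu:=\min_{\partial Q}|\Phi|>0$, for all large $n$ we have $\|\Phi_n-\Phi\|_{C^0(\partial Q)}<\mu$; the straight-line homotopy between $\Phi$ and $\Phi_n$ then stays nonzero on $\partial Q$, so $\deg(\Phi_n,Q,0)=\deg(\Phi,Q,0)\neq0$. Hence $\Phi_n$ has a zero in $Q$ for all large $n$, i.e.\ $w_n(\cdot,t)$ has a multiple zero at some point of $Q$. Running this for a sequence of boxes with $\delta,\,t^+-t^-\to0$ and diagonalizing produces $x_n\to x_0$, $t_n\to t_0$ such that $w_n(\cdot,t_n)$ has a multiple zero at $x_n$ for all sufficiently large $n$, as required.
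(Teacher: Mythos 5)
First, a point of reference: the paper does not prove Lemma \ref{le:robust} at all --- it is quoted from \cite{Du_Matano}, and the argument there is entirely different from yours. One assumes, for contradiction, that (a subsequence of) the $w_n$ have no multiple zeros in a small closed box $Q=[x_1,x_2]\times[t^-,t^+]$ around $(x_0,t_0)$, chosen so that $w\neq 0$ on the lateral sides and all zeros of $w(\cdot,t^\pm)$ in $[x_1,x_2]$ are simple. The implicit function theorem then makes the zero set of $w_n$ in $Q$ a disjoint union of $C^1$ graphs $x=\gamma(t)$ that can terminate only on the top or bottom of $Q$, so $w_n$ has equally many zeros on the bottom slice as on the top slice; these counts converge to $z^{\pm}:=z_{(x_1,x_2)}(w(\cdot,t^{\pm}))$, contradicting the strict drop $z^->z^+$ given by Lemma \ref{le:zero}(iii). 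Note that this argument, like yours, uses the equation \eqref{eq:lin} only for $w$, never for $w_n$. Your degree-theoretic plan is a genuinely different route, and its outer layers are sound: the space-time isolatedness of multiple zeros of $w$, the homotopy-invariance/stability step, the diagonalization, and the Jacobian computation $\det D\Phi(x_0,t_0)=-w_{xx}^2(x_0,t_0)<0$ when the zero has exact order two are all correct.

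The genuine gap is the index computation in the only case where it really matters: a multiple zero of order $k\ge 3$ (nothing in the applications reduces to nondegenerate zeros). Your winding estimate, as stated, cannot close: if the drop is $z^--z^+=2$, the main term in the total change of $\arg(w+iw_x)$ around $\partial Q$ is $-2\pi$, while your own accounting allows an error below $\pi$ on each of the four sides, i.e.\ up to almost $4\pi$, so the estimate is consistent with winding number $0$. Deferring this step to \cite{Angenent_Crelle88,Chen_MathAnn98} and to \cite{Du_Matano} is not a proof: the first two provide local structure of zeros but no degree computation, and \cite{Du_Matano} contains no degree argument whatsoever. The good news is that your approach closes with an exact count instead of an estimate. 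Compute the winding number of $\Phi|_{\partial Q}$ as the signed number of transversal crossings with the single ray $R=\{(0,v):v>0\}$. Crossings occur only on the horizontal sides (on the vertical sides $w\neq0$), namely at the simple zeros of $w(\cdot,t^{\pm})$ at which $w_x>0$; each such zero is crossed clockwise on the bottom and counterclockwise on the reversed top. Because $w$ has constant sign along each vertical side, the signs of $w$ at the corners $x_1,x_2$ are the same for the two slices, so the number of these up-crossings equals $\tfrac12 z^{\pm}$ plus a corner correction \emph{identical} for $t^-$ and $t^+$. Hence
\begin{equation*}
  \deg(\Phi,Q,0)=-\tfrac12\left(z^--z^+\right)\le -1,
\end{equation*}
exactly, with no error terms and uniformly in the order $k$ (note $z^--z^+$ is even and, by Lemma \ref{le:zero}(iii), positive). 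This single computation replaces both your generic-case Jacobian argument and the missing higher-order case; without it, or something equivalent, the proposal does not prove the lemma.
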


\subsection{Limit sets and entire solutions}\label{sec:inv}

The $\omega-$limit set of a bounded solution $u$ of
\eqref{eq:1} is defined as in \eqref{defomega}, with the
convergence in $L^\infty_{loc}(\R)$.  As already noted above,
it is a nonempty, compact, and connected set in  $L^\infty_{loc}(\R)$.
It is also well known that $\om(u)$
has the following invariance property: for any
$\vp\in\omega(u),$ there is an entire solution $U(x,t)$ of
\eqref{eq:1} such that
\begin{equation}\label{entiresol}
  U(\cdot,0)=\vp,\qquad U(\cdot,t)\in\omega(u)\quad (t\in\R).
\end{equation}
In fact, if a sequence $t_n\to\infty$ is such that
$u(\cdot,t_n)\to \vp$ in $L^\infty_{loc}(\R)$, then, for a
subsequence, we have  $u(\cdot,t_n+\cdot)\to U$ in $C^1_{loc}(\R^2)$,
where $U$ is an entire solution of \eqref{eq:1} satisfying
\eqref{entiresol}. This follows by compactness arguments based on
parabolic estimates  (see \cite[Section
3.2]{p-Pauthier2} for more details.) 
Note that the entire solution satisfying
$U(\cdot,0)=\vp$  is uniquely determined by $\varphi$; this
follows from the uniqueness and backward uniqueness for the Cauchy
problem \eqref{eq:1}, \eqref{ic:1}.

The above considerations also imply that  $\omega(u)$ 
is unaffected if the convergence in \eqref{defomega} is taken in
$C_{loc}^1(\R)$, rather than in $L^\infty_{loc}(\R)$, and therefore
$\omega(u)$ is connected in $C_{loc}^1(\R)$ as well.
Hence, the set
\[
\tau\left(\omega(u)\right)=\left\{
  (\vp(x),\vp_x(x)):\vp\in\omega(u),x\in\R\right\} =
\underset{\vp\in\omega(u)}{\textstyle \bigcup}\tau(\vp)
\]
is connected in $\R^2$. (Here, $\tau(\varphi)$ is as in
\eqref{eq:straj}.)  Also, obviously, $\tau(\vp)$ is connected in
$\R^2$ for all $\vp\in\omega(u).$

If $U$ is a bounded entire solution of \eqref{eq:1}, we define its
$\alpha-$limit set by
\begin{equation}\label{defalpha}
  \alpha(U):=\left\{ \vp\in C_b(\R):U(\cdot,t_n)\to\vp\textrm{ for some sequence }t_n\to-\infty\right\}.
\end{equation}
Here, again, the convergence is in $L_{loc}^\infty(\R)$, but due to
parabolic regularity, it can be taken in $C_{loc}^1(\R)$ with no
effect on $\al(U)$. The
$\alpha$-limit set has similar properties as the $\omega-$limit set:
it is nonempty, compact and connected in $L^\infty_{loc}(\R)$ as well
as in $C^1_{loc}(\R)$, and for any $\varphi\in \al(U)$ there is an
entire solution $\tilde U$ such that $\tilde U(\cdot,0)=\vp$ and
$\tilde  U(\cdot,t)\in\al(U)$ for all $t\in\R$.  The connectivity property of
$\al(U)$ implies that the set
$$
\tau\left(\alpha(U)\right)=\left\{
  (\vp(x),\vp_x(x)):\vp\in\alpha(U),x\in\R\right\} =
\underset{\vp\in\alpha(U)}{\textstyle\bigcup}\tau(\vp)
$$
is connected in $\R^2.$

For a bounded entire solution $U$ of \eqref{eq:1}, we define
generalized notions of $\al$ and $\om$-limit
sets as follows: 
\begin{align}
  \Omega(U) & := \left\{ \vp\in C_b(\R):U(\cdot+x_n,t_n)\to\vp\textrm{
              for some sequences }x_n\in\R, \ t_n\to\infty\right\} \label{defOmega}, \\
  A(U) & := \left\{ \vp\in C_b(\R):U(\cdot+x_n,t_n)\to\vp\textrm{ for some sequences }x_n\in\R, \ t_n\to-\infty\right\}. \label{defAlpha}
\end{align}
The convergence can be taken in $L_{loc}^\infty(\R)$ or
$C^1_{loc}(\R)$ without altering the sets $\Om(U)$,
$A(U)$.  Both these sets are nonempty, compact and connected in
$C^1_{loc}(\R)$, and they have a similar invariance property as
$\om(U)$, $\al(U)$.   Also, by their definitions, the
sets $\Om(U)$, $A(U)$ are translation invariant as well. Further, the
definitions and parabolic regularity imply that the sets
\begin{equation*}
  \tau\left( A(U)\right)= \underset{\vp\in A(U)}{\textstyle\bigcup}\tau(\vp),
  \quad\tau\left(\Omega(U)\right) =\underset{\vp\in\Omega(U)}{\textstyle\bigcup}\tau(\vp)
\end{equation*}
are connected and compact in $\R^2.$ We remark that the sets
$\tau(\om(u))$, $\tau(\al(u))$ are both connected (as noted above),
but they are not necessarily compact in $\R^2$.

\subsection{Further  technical  results}\label{sec:24}

Throughout this subsection, we assume that $u_0$ is in  $C_b(\R)$ (not
necessarily in $\cV$), $u$ is the solution of \eqref{eq:1}, \eqref{ic:1}
and it is bounded.

In view of the invariance property of $\om(u)$ (see
\eqref{entiresol}), the following lemma gives a criterion for an
element $\varphi\in \om(u)$ to be a steady state. This
unique-continuation
type result is proved in a more general form in
\cite[Lemma 6.10]{Polacik_terrasse}.
\begin{lemma}\label{le:2.7}
  Let $\varphi:=U(\codt,0)$, where $U$ is a solution of \eqref{eq:1}
  defined on a time interval $(-\de,\de)$ with $\de>0$ (this holds in
  particular if $\varphi\in \om(u)$).  If $\tau(\varphi)\subset\Sigma$
  for some chain $\Sigma,$ then $\varphi$ is a steady state of
  \eqref{eq:1}.
\end{lemma}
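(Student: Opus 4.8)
The plan is to turn the inclusion $\tau(\varphi)\subset\Sigma$ into a pointwise differential identity and then rule out the only way it can fail to force the steady-state equation. First I would invoke Lemma~\ref{MatPolLemma}(i): a chain lies in a single level set of the Hamiltonian $H$ from \eqref{energy}, so there is a constant $c$ with
\[
\tfrac12\varphi_x(x)^2+F(\varphi(x))=c\qquad(x\in\R).
\]
Differentiating in $x$ and using $F'=f$ gives $\varphi_x(x)\bigl(\varphi_{xx}(x)+f(\varphi(x))\bigr)=0$ for every $x$. Since $\varphi=U(\cdot,0)$ and $U$ solves \eqref{eq:1}, the second factor is precisely $g:=U_t(\cdot,0)=\varphi_{xx}+f(\varphi)$. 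Thus $g$ vanishes at every point that is not a critical point of $\varphi$, i.e.\ $g\equiv0$ on the open set $O:=\{\varphi_x\neq0\}$, and the goal becomes $g\equiv0$ on all of $\R$, which says exactly that $\varphi$ is a steady state.

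Because $U$ is a classical solution on the \emph{two-sided} interval $(-\delta,\delta)$, the profile $\varphi$ is $C^2$ and $g$ is continuous, so $g\equiv0$ already on $\overline O$. It remains to treat the interior of the critical set $\{\varphi_x=0\}$, where $\varphi$ is locally constant, say $\varphi\equiv\xi$, and there $g=f(\xi)$. The argument I would use is a $C^2$-matching at the boundary of a flat interval: if such an interval has a finite endpoint $b$ beyond which $\varphi$ is nonconstant, then approaching $b$ from inside gives $\varphi_{xx}(b)=0$ and $\varphi(b)=\xi$, so $g(b)=f(\xi)$, while approaching $b$ through points of $O$ gives $g(b)=0$ by continuity; hence $f(\xi)=0$ and $g\equiv0$ on the flat interval as well. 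Consequently $g\equiv0$ on $\R$ whenever $\varphi$ is nonconstant, and $\varphi$ is a steady state.

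An equivalent and more robust way to close the flat-interval case is the zero-number rigidity of Lemma~\ref{le:zero}(i). Linearizing \eqref{eq:1} shows that $w:=U_t$ solves a linear equation of the form \eqref{eq:lin} on $\R\times(-\delta,\delta)$ with bounded measurable coefficient (legitimate since $f$ is globally Lipschitz by (MF) and $U$ is bounded); as $t=0$ is an interior time, a nontrivial such $w$ has only \emph{isolated} zeros at $t=0$. But $w(\cdot,0)=g$ vanishes on the nonempty open set $O$ when $\varphi$ is nonconstant, so $w\equiv0$, again yielding the steady-state equation.

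I expect the main obstacle to be exactly this set where $\varphi_x=0$: the level-set relation supplies the steady-state equation for free off the critical set, and all the real work lies in showing that a genuine flat piece cannot persist at a value $\xi$ with $f(\xi)\neq0$. The $C^2$-matching and the zero-number arguments each close this gap. The only genuinely degenerate situation is $\varphi$ globally constant, in which case $\tau(\varphi)$ is a single point of $\Sigma$ on the $u$-axis; there the preceding arguments give no information, and one needs only the separate observation that this value is a zero of $f$, after which $\varphi$ is once more a steady state.
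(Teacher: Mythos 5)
The paper itself gives no proof of this lemma; it only cites \cite[Lemma 6.10]{Polacik_terrasse}, calling it a ``unique-continuation type result''. Measured against that, your argument for \emph{nonconstant} $\varphi$ is correct and is exactly the expected argument: the level-set identity $\tfrac12\varphi_x^2+F(\varphi)\equiv c$ from Lemma~\ref{MatPolLemma}(i) gives $\varphi_x\,\bigl(\varphi_{xx}+f(\varphi)\bigr)\equiv0$, and then either the $C^2$-matching at a finite endpoint of a maximal flat interval, or (more robustly) Lemma~\ref{le:zero}(i) applied to $w=U_t$ --- a solution of \eqref{eq:lin}, nontrivial unless $U$ is steady, which at the interior time $t=0$ vanishes on the nonempty open set $\{\varphi_x\ne0\}$ and hence cannot have only isolated zeros --- forces $\varphi_{xx}+f(\varphi)\equiv0$.

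The genuine gap is the constant case, which you dispose of by appealing to ``the separate observation that this value is a zero of $f$''. No such observation is available: it is false at the endpoints of a nontrivial chain. By Lemma~\ref{MatPolLemma}(i), if $p<q$ then $(p,0)$ and $(q,0)$ lie on homoclinic orbits of \eqref{eq:sys}, and an axis point lying on a homoclinic orbit satisfies $f\ne0$ there (otherwise it would be an equilibrium of \eqref{eq:sys} and could not lie on a nonstationary orbit). Taking $\varphi\equiv q$ and $U(x,t):=\zeta(t)$, where $\dot\zeta=f(\zeta)$, $\zeta(0)=q$, all hypotheses of the lemma as literally stated are met --- $U$ is a bounded classical solution on a two-sided time interval and $\tau(\varphi)=\{(q,0)\}\subset\Sigma$ --- yet $\varphi$ is not a steady state. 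So the constant case cannot be closed from the single-time hypothesis by any argument; this is a defect of the literal statement, and your proof inherits it at precisely the point where you substitute an assertion for an argument. What makes the lemma usable in the paper is the context described before it in Section~\ref{sptraj}: there one knows that \emph{all} spatial trajectories $\tau(U(\cdot,t))$, $t\in\R$, lie in $\Sigma$ (equivalently, that $\varphi$ belongs to an invariant limit set with this property). Then, for $\varphi\equiv\xi$, the curve $t\mapsto(\zeta(t),0)$ stays in $\Sigma$; since by (ND) the compact set $\{c\in\R:(c,0)\in\Sigma\}$ contains no interval, it is totally disconnected, so $\zeta$ is constant and $f(\xi)=0$. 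For interior axis points ($p<\xi<q$) your claim is indeed supplied by Lemma~\ref{MatPolLemma}(i); but at $\xi\in\{p,q\}$ only an invariance argument of this kind (or an explicit nonconstancy hypothesis) saves the conclusion, and your write-up contains neither.
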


As already noted above, it is proved in \cite{Gallay-S} (see also
\cite{Gallay-S2}) that the $\om$-limit set of any bounded solution of
\eqref{eq:1} contains a steady state. We will use this result for
entire solutions: 

\begin{theorem}
  \label{thm:GS} If $U$ is a bounded entire solution of \eqref{eq:1},
  then each of the sets $\om(U)$ and $\al(U)$ contains a steady state
  of \eqref{eq:1}.
\end{theorem}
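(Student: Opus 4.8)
The plan is to reduce both assertions to the forward result of Gallay and Slijep\v{c}evi\'c \cite{Gallay-S,Gallay-S2} that the $\om$-limit set of \emph{any} bounded solution of \eqref{eq:1} contains a steady state. The statement for $\om(U)$ is then immediate: restricting the bounded entire solution $U$ to $t\ge 0$, it is in particular a bounded solution of \eqref{eq:1}, \eqref{ic:1} with initial datum $U(\codt,0)$, so the cited result applies verbatim and $\om(U)$ contains a steady state. The assertion for $\al(U)$ is the one requiring care, since \eqref{eq:1} is not time-reversible and one cannot simply run the cited theorem backward.

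To treat $\al(U)$, I would exploit the invariance and compactness of $\al(U)$ recorded in Section~\ref{sec:inv} instead. Since $U$ is bounded, $\al(U)$ is nonempty and compact in $L^\infty_{loc}(\R)$, and for any $\vp\in\al(U)$ there is an entire solution $\tilde U$ with $\tilde U(\codt,0)=\vp$ and $\tilde U(\codt,t)\in\al(U)$ for all $t\in\R$. Fix such a $\vp$ and $\tilde U$. As all values $\tilde U(\codt,t)$ lie in $\al(U)$, they are bounded by $M:=\sup_{\R\times\R}|U|<\infty$, so $\tilde U$ is itself a bounded entire solution; applying the Gallay--Slijep\v{c}evi\'c result to $\tilde U$ viewed as a forward solution yields a steady state $\phi\in\om(\tilde U)$.

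It remains to place $\phi$ inside $\al(U)$. Every element of $\om(\tilde U)$ is an $L^\infty_{loc}$-limit of $\tilde U(\codt,t_n)$ along some $t_n\to\infty$, and each $\tilde U(\codt,t_n)$ belongs to $\al(U)$; since $\al(U)$ is closed in $L^\infty_{loc}(\R)$, it follows that $\om(\tilde U)\subset\al(U)$, and in particular the steady state $\phi$ lies in $\al(U)$. This completes the argument. The only delicate point---and the step I would present most carefully---is exactly this transfer: rather than reversing time, one applies the forward theorem to an entire solution that lives inside the compact invariant set $\al(U)$, so that its forward limit is trapped in $\al(U)$ by closedness.
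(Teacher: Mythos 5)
Your proposal is correct and follows essentially the same route as the paper: the forward claim is the Gallay--Slijep\v{c}evi\'c result applied to $U$ restricted to $t\ge 0$, and the claim for $\al(U)$ is obtained by taking an entire solution $\tilde U$ with $\tilde U(\cdot,t)\in\al(U)$ for all $t$, applying the forward result to $\tilde U$, and using closedness of $\al(U)$ to conclude $\om(\tilde U)\subset\al(U)$. The extra details you supply (boundedness of $\tilde U$ by $\sup|U|$, the closedness argument) are exactly the ones the paper leaves implicit.
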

The result concerning the  $\al$-limit set follows from the result of
the $\om$-limit set via compactness and invariance properties of
$\al(U)$:  taking an entire solution
$\tilde U$ with $\tilde U(\cdot,t)\in\al(U)$ for all $t$, we have $\om(\tilde
U)\subset \al(U)$ and $\om(\tilde U)$ contains a steady state.

The following lemma is essentially the same as 
\cite[Lemma 2.11]{p-Pauthier2} (see also \cite[Proof
of Proposition 2.1]{p-B-Q}).  The only difference is that
in \cite[Lemma 2.11]{p-Pauthier2}, $\theta$ is a constant 
whereas here we allow $\theta=\theta(t)$ to continuously depend on
$t$. This makes just a notational difference in the proof given in
\cite[Lemma 2.11]{p-Pauthier2}. 

\begin{lemma}\label{le:BPQ}
  Let $U$ be a solution of \eqref{eq:1} on $\R\times J$, where
  $J\subset \R$ is an open time interval, and let 
  $\theta(t)$ be a continuous real function on $J$.
  Assume that for each $t\in J$
  the function $U(\cdot,t)-\theta(t)$ has at least
  one zero and
 $$
 \xi(t):=\sup\{ x:U(x,t)=\theta(t)\}
 $$
 is finite and depends continuously on $t\in J$.  Then, for any
 $t_0, t_1\in J$ satisfying the relations $t_1>t_0$ and
 $\xi(t_1)<\xi(t_0)$, the function $U_x(\cdot,t_1)$ is of constant
 sign on the interval $(\xi(t_1),\xi(t_0)]$.  If $J=(-\infty,b)$ for
 some $-\infty<b\le \infty$ and $\limsup_{t\to-\infty}\xi(t)=\infty$,
 then $U_x$ is of constant sign on $(\xi(t),\infty),$ for all
 $t\in J.$

 Analogous statements hold for $\xi(t)=\inf\{x:U(x,t)=\theta(t)\}.$
\end{lemma}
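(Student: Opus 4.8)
The plan is to reduce the four assertions to a single core statement and then prove that by a zero-number argument applied to $p:=U_x$. First I would dispose of the two reductions. The ``inf'' version follows from the ``sup'' version by the reflection $x\mapsto -x$: if $\bar U(x,t):=U(-x,t)$, then $\bar U$ solves an equation of the same type, $\sup\{x:\bar U(\cdot,t)=\theta(t)\}=-\inf\{x:U(\cdot,t)=\theta(t)\}$, and $\bar U_x(x,t)=-U_x(-x,t)$, so constant sign of $\bar U_x$ on the reflected interval is equivalent to constant sign of $U_x$. The half-line statement (the case $J=(-\infty,b)$ with $\limsup_{t\to-\infty}\xi(t)=\infty$) follows from the bounded-interval statement by exhaustion: fixing $t\in J$ and choosing $t_0^{(n)}\to-\infty$ with $\xi(t_0^{(n)})\uparrow\infty$, the intervals $(\xi(t),\xi(t_0^{(n)})]$ are (after passing to a monotone subsequence) nested and exhaust $(\xi(t),\infty)$; constant sign on each, together with their nesting, yields constant sign on the whole half-line. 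So everything rests on the first assertion, for fixed $t_0<t_1$ with $\xi(t_1)<\xi(t_0)$.

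For the core statement I would argue by contradiction. Since reflecting or replacing $U$ by $-U$ (with $f$ replaced by $s\mapsto -f(-s)$) preserves all hypotheses and the conclusion, I may normalize so that $U(x,t_1)>\theta(t_1)$ for $x>\xi(t_1)$; as $U(\xi(t_1),t_1)=\theta(t_1)$, this forces $U_x>0$ immediately to the right of $\xi(t_1)$. Suppose now that $U_x(\cdot,t_1)$ is not of constant sign on $(\xi(t_1),\xi(t_0)]$. Since $U_x(\cdot,t_1)$ is continuous and takes both signs there, it has a sign-changing zero at some $y\in(\xi(t_1),\xi(t_0)]$; that is, $U(\cdot,t_1)$ has a strict local extremum at $y$ with $U(y,t_1)>\theta(t_1)$, hence $U(y,t_1)\neq\theta(t_1)$. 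The function $p:=U_x$ solves the homogeneous linear equation $p_t=p_{xx}+f'(U)p$ of the form \eqref{eq:lin}, so Lemmas \ref{le:zero}, \ref{le:zerot} and the robustness Lemma \ref{le:robust} are all available for $p$.

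The mechanism of the intended contradiction is the monotonicity of the zero number of $p$ played against the fact that the rightmost level point $\xi(t)$ has \emph{retreated} from $\xi(t_0)$ to $\xi(t_1)$ as $t$ increased. Concretely, I would track $z_{I(t)}(p(\cdot,t))$ on a time-dependent interval $I(t)=(\eta(t),\infty)$, with $\eta$ a continuous curve following the level curve $\xi$ and chosen so that $p(\eta(t),t)=U_x(\eta(t),t)\neq0$ for all $t\in[t_0,t_1]$; then Lemma \ref{le:zerot} applies (no condition is needed at the endpoint $+\infty$) and $t\mapsto z_{I(t)}(p(\cdot,t))$ is nonincreasing. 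The aim is to arrange $I$ so that the sign-changing zero $y$ makes this count $\geq 1$ at $t_1$, while the retreat of $\xi$ together with the rightmost-intersection property of $\xi(t_0)$ and the normalization forces the count to be $0$ at the earlier time $t_0$; since $t_0<t_1$, an increase of the count contradicts monotonicity. At the (isolated) instants where the level $\theta(t)$ is attained tangentially at the rightmost point, $\eta$ cannot be taken equal to $\xi$, and there Lemma \ref{le:robust} is used to turn the tangency into a genuine drop of the zero number, which only reinforces the contradiction.

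The hard part will be exactly this last bookkeeping. The difficulty is that the geometry is naturally expressed through the level set of $U(\cdot,t)-\theta(t)$, which does \emph{not} solve a homogeneous linear equation (all the more so because $\theta$ is only continuous in $t$, so the zero-number tools cannot be applied to $U-\theta$ directly), whereas the available monotonicity applies to $p=U_x$. Reconciling the two requires building the moving endpoint $\eta$ so that $U_x$ stays nonzero along it—this is precisely where the continuity of $\xi$ and its definition as the \emph{rightmost} zero of $U(\cdot,t)-\theta(t)$ are indispensable—and then verifying that, in the interval so constructed, no extremum of $U(\cdot,t_0)$ survives at the earlier time, so that the count is genuinely $0$ at $t_0$ while positive at $t_1$. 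Controlling the endpoint across the tangential-crossing times (via Lemma \ref{le:robust}) and checking that the count comparison runs in the direction compatible with the one-sided monotonicity of the zero number is the technical heart of the proof; once this is in place, the conclusion is immediate from Lemma \ref{le:zerot}.
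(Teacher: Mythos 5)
Your two reductions are fine (the ``inf'' case via $x\mapsto-x$, and the half-line statement by exhausting $(\xi(t),\infty)$ with the intervals $(\xi(t),\xi(t_0^{(n)})]$), and so is the normalization $U>\theta(t)$ to the right of $\xi(t)$. The core argument, however, has a genuine gap, and it is not the ``bookkeeping'' you defer to the end but the mechanism itself. Your contradiction requires that the count $z_{I(t_0)}(U_x(\cdot,t_0))$ on $I(t_0)=(\eta(t_0),\infty)\approx(\xi(t_0),\infty)$ be $0$, i.e., that $U(\cdot,t_0)$ have no critical points to the right of its rightmost $\theta$-level point. Nothing in the hypotheses gives this: being the rightmost zero of $U(\cdot,t_0)-\theta(t_0)$ only says $U(\cdot,t_0)>\theta(t_0)$ on $(\xi(t_0),\infty)$, and the profile there may have many --- even infinitely many --- local maxima and minima above the level (think of a tail like $\theta+e^{-x}(2+\sin x)$). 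Indeed, this is precisely why the lemma's conclusion is silent about $(\xi(t_0),\infty)$ at time $t_1$: bumps living to the right of $\xi(t_0)$ can persist. So the count at $t_0$ may be large or infinite, the monotonicity of $t\mapsto z_{I(t)}(U_x(\cdot,t))$ yields no contradiction with the count being $\ge1$ at $t_1$, and no choice of the curve $\eta$ can repair this (the subsidiary problem of constructing $\eta$ with $U_x(\eta(t),t)\neq0$ across tangential touchings of $U$ with the level is also unresolved, but it is moot). Note also that the paper itself gives no self-contained proof here; it defers to \cite[Lemma 2.11]{p-Pauthier2} and \cite[Proof of Proposition 2.1]{p-B-Q}, so the comparison is with those arguments, which do not rest on counting critical points to the right of the front.

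The idea your sketch is missing is to compare $U$ not with the level $\theta$ (which solves no equation) but with its own \emph{reflections}, so that the signs at both ends of the relevant region are known a priori. With the normalization above, fix $\lambda\in(\xi(t_1),\xi(t_0)]$, set $\sigma_\lambda:=\max\{t\in[t_0,t_1]:\xi(t)\ge\lambda\}$, so that $\xi(\sigma_\lambda)=\lambda$ and $\xi(t)<\lambda$ for $t\in(\sigma_\lambda,t_1]$, and consider $w_\lambda(x,t):=U(2\lambda-x,t)-U(x,t)$, which solves a linear equation \eqref{eq:lin}, on the bounded non-cylindrical domain $D_\lambda:=\{(x,t):\sigma_\lambda<t\le t_1,\ \xi(t)<x<\lambda\}$. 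On the parabolic boundary of $D_\lambda$ one has $w_\lambda\ge0$: it vanishes on $x=\lambda$; on the curve $x=\xi(t)$ it equals $U(2\lambda-\xi(t),t)-\theta(t)>0$ because $2\lambda-\xi(t)>\lambda>\xi(t)$; and the bottom time slice degenerates to the single corner $(\lambda,\sigma_\lambda)$. The maximum principle and strong maximum principle give $w_\lambda>0$ in $D_\lambda$, and Hopf's lemma at $(\lambda,t_1)$ (the domain contains a rectangle $(\lambda-\e,\lambda)\times(t_1-\e,t_1]$ since $\xi(t)\to\xi(t_1)<\lambda$) yields $\partial_xw_\lambda(\lambda,t_1)=-2U_x(\lambda,t_1)<0$, i.e., $U_x(\lambda,t_1)>0$. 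As $\lambda$ was arbitrary, $U_x(\cdot,t_1)>0$ on $(\xi(t_1),\xi(t_0)]$. Observe that $\theta$ enters only through the inequality $U(\xi(t),t)=\theta(t)<U(2\lambda-\xi(t),t)$, which is exactly why allowing a merely continuous, time-dependent $\theta(t)$ is, as the paper says, only a notational change; by contrast, in your scheme $\theta$ would have had to interact with the zero-number machinery, which it cannot.
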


We next state a quasiconvergence result from 
\cite{p-Pauthier1} (cp. \cite[Theorem 2.12]{p-Pauthier2}).
For any $\lambda\in\R$, consider the function $V_\la u$ 
defined by
\begin{equation}\label{reflexion}
  V_\la u(x,t)=u(2\la-x,t)-u(x,t),\quad x\in\R,\,t\ge 0.
\end{equation}
\begin{theorem}\label{thmPP1}
  Assume that $u_0\in\cV$ and one of the following conditions holds:
  \begin{itemize}
  \item[(i)] $u_0(-\infty)\ne u_0(\infty)$,
  \item[(ii)] there is $t>0$ such that for all $\la\in\R,$ one has
    $z(V_\la u(\cdot,t))<\infty.$
  \end{itemize}
  Then, $u$ is quasiconvergent. Moreover, $\omega(u)$ does not contain any nonconstant periodic function.
\end{theorem}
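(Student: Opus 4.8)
The plan is to show that every $\vp\in\om(u)$ is a steady state by passing to the entire solution $U$ with $U(\cdot,0)=\vp$ and $U(\cdot,t)\in\om(u)$ for all $t$, and using the zero number of the reflection $V_\la U$ (cf. \eqref{reflexion}) as the main tool. First I would reduce case (i) to case (ii). The spatial limits $u(\pm\infty,t)$ solve $\dot\xi=f(\xi)$ with the distinct initial values $u_0(\pm\infty)$, so by the comparison principle for the scalar ODE they stay distinct and ordered for all $t$. Hence, for every $\la$, $V_\la u(x,t)=u(2\la-x,t)-u(x,t)$ tends to the two nonzero, opposite limits $\pm\bigl(u(+\infty,t)-u(-\infty,t)\bigr)$ as $x\to\mp\infty$; being sign-definite near $\pm\infty$ it has only finitely many zeros (Lemma \ref{le:zero}(i) on $I=\R$), so (ii) holds. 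From now on I work under (ii).

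Under (ii) there is $t^*$ with $z(V_\la u(\cdot,t^*))<\infty$ for all $\la$; since $V_\la u$ solves a linear equation of the form \eqref{eq:lin}, $t\mapsto z(V_\la u(\cdot,t))$ is finite and nonincreasing for $t\ge t^*$ (Lemma \ref{le:zero}). Writing $U(\cdot,s)=\lim_n u(\cdot,t_n+s)$ in $C^1_{loc}$ and using lower semicontinuity of the zero number (cf. Lemma \ref{le:robust}), I get $z(V_\la U(\cdot,s))\le\lim_{t\to\infty}z(V_\la u(\cdot,t))<\infty$ whenever $V_\la U\not\equiv0$; moreover $z(V_\la U(\cdot,t))$ is constant in $t$, since a drop would be inherited by $u$ at arbitrarily large times, contradicting the existence of the limit. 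This already gives the \emph{no nonconstant periodic function} assertion: if some $\vp\in\om(u)$ were nonconstant and periodic, then for $\la$ off its discrete set of symmetry axes, $V_\la\vp$ would be a nonzero periodic function with infinitely many zeros, contradicting $z(V_\la\vp)<\infty$.

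Next I would extract the rigid spatial structure of $U$. For each $\la$ there is a dichotomy: either $V_\la U(\cdot,t)\equiv0$ for all $t$ (so $U(\cdot,t)$ is symmetric about $\la$), or $V_\la U\not\equiv0$, in which case constancy of $z(V_\la U(\cdot,t))$ rules out multiple zeros (Lemma \ref{le:zero}(iii)); as $x=\la$ is always a zero of $V_\la U$ and its multiplicity is $\ge 2$ exactly when $U_x(\la,t)=0$, the second alternative forces $U_x(\la,t)\ne0$ for all $t$. Two distinct symmetry axes would make $U$ spatially periodic and nonconstant, already excluded. Hence, for all $t$, $U(\cdot,t)$ is either strictly monotone in $x$, or has a single critical point at one fixed axis $\la_0$ and is strictly monotone on either side of it. In particular $U(\cdot,t)$ has limits $U(\pm\infty,t)$, which are zeros of $f$, i.e. equilibria of \eqref{eq:sys}. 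The final step is to upgrade this rigidity to steadiness: I would argue that the monotone (resp. single-hump) structure, together with the two fixed axis equilibria, confines the spatial trajectory $\tau(U(\cdot,t))$ to the closure of a single chain (connected component of $\R^2\setminus\cP_0$) of \eqref{eq:sys}; once this is known, the unique-continuation Lemma \ref{le:2.7} forces $U(\cdot,t)$ to be a steady state, so $\vp$ is a steady state and $u$ is quasiconvergent.

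The hard part is this last step. The reflection count alone does not distinguish steady profiles from genuinely time-dependent ones: a traveling wave is monotone with $z(V_\la U)\equiv1$ for every $\la$, yet it is not steady, and its $(U,U_x)$-curve is a trajectory of the traveling-wave system, not of \eqref{eq:sys}. Thus the crux is to show that the arc (or symmetric loop) traced by $\tau(U(\cdot,t))$ actually lies along orbits of \eqref{eq:sys}, i.e. within a chain, and this is precisely where the hypothesis $u_0\in\cV$ must be used quantitatively—the fixed equilibria at $\pm\infty$ and the absence of extra critical points have to be parlayed into confinement of the trajectory to one component of $\R^2\setminus\cP_0$. I expect the bulk of the work (and the content imported from \cite{p-Pauthier1}) to reside in establishing this confinement, after which Lemma \ref{le:2.7} closes the argument; Theorem \ref{thm:GS} can be used along the way to guarantee that the relevant chain is nonempty of steady states.
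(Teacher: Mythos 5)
The first thing to note is that this paper does not prove Theorem \ref{thmPP1} at all: it is imported verbatim from \cite{p-Pauthier1} (cf.\ \cite[Theorem 2.12]{p-Pauthier2}), and its proof is essentially the entire content of that earlier paper. So your proposal has to be judged on its own. The parts you do carry out are sound and in the spirit of the source: the reduction of (i) to (ii) (the spatial limits solve $\dot\xi=f(\xi)$ with distinct data, so $V_\la u(\cdot,t)$ is sign-definite near $\pm\infty$ and Lemma \ref{le:zero}(i) gives finiteness); the passage to the entire solution $U$ with finiteness and simplicity of the zeros of $V_\la U(\cdot,t)$ via Lemma \ref{le:robust}; and the exclusion of nonconstant periodic elements of $\om(u)$ (a nonconstant periodic $\vp$ has a discrete set of symmetry axes, and for $\la$ off that set $V_\la\vp$ is periodic with zero mean, hence has infinitely many zeros). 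This correctly proves the second assertion of the theorem and the structural rigidity of $U$.

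The gap is exactly where you admit it is, and it is not a finishing touch but the theorem itself. Knowing that each $U(\cdot,t)$ is strictly monotone or a symmetric single bump does not confine $\tau(U(\cdot,t))$ to a chain: as your own traveling-wave example shows, such profiles can be genuinely time-dependent, and their spatial trajectories follow orbits of $u_{xx}+cu_x+f(u)=0$, not of \eqref{eq:sys}. Ruling these out is what \cite{p-Pauthier1} spends most of its length on, using Lemma \ref{le:limits}, a careful analysis of how spatial trajectories of entire solutions in $\om(u)$ intersect chains and components of $\cP_0$, and the chain-recurrence (``no flow direction'') argument recalled around \eqref{eq:6} in Section \ref{sptraj}; your proposal offers no mechanism for this step, so the quasiconvergence claim remains unproved. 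There is also a second, unacknowledged hole: your assertion that two distinct symmetry axes make $U$ ``spatially periodic and nonconstant'' is false --- $U$ could be constant in $x$ but nonconstant in $t$, i.e.\ $U(\cdot,t)\equiv c(t)$ with $\dot c=f(c)$ and $f(c(0))\neq 0$. The reflection functional is blind to this case ($V_\la U\equiv 0$ for every $\la$), so none of your machinery excludes such elements of $\om(u)$, yet excluding them is part of quasiconvergence and cannot be waved away (Lemma \ref{le:2.7} does not apply when $(c,0)\in\cP_0$). A minor further point: your ``constancy in $t$ of $z(V_\la U(\cdot,t))$'' argument only rules out drops caused by multiple zeros, not drops caused by zeros escaping to spatial infinity; fortunately only finiteness and simplicity are needed downstream.
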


The following result is the same as \cite[Lemma 2.13]{p-Pauthier2}.
It 
is a variant of the Squeezing Lemma from
\cite{P:entire}. 
\begin{lemma}\label{le:squeezing}
  Let $U$ be a bounded entire solution of \eqref{eq:1} such that if
  $\beta\in f^{-1}\{0\}$ is an unstable equilibrium of the equation
  $\dot \xi=f(\xi)$, 
  then
  \begin{equation}\label{eq:2.15}
    z\left( U(\cdot,t)-\beta\right)\leq N\quad (t\in\R)
  \end{equation}
  for some $N<\infty.$ Let $K$ be any one of the following subsets of
  $\R^2:$
 $$
 \underset{t\in\R}{{\textstyle \bigcup}}\tau\left( U(\cdot,t)\right),\quad
 \tau\left(\omega(U)\right), \quad \tau\left(\Omega(U)\right),\quad
 \tau\left(\alpha(U)\right), \quad \tau\left( A(U)\right).
 $$
 Assume that $\mathcal{O}$ is a  nonstationary periodic orbit of
 \eqref{eq:sys} such that one of the following inclusions holds:
 \begin{equation*}
   \text{(i) \quad $K\subset   \mathcal{I}(\mathcal{O})$,\qquad\qquad
   (ii)\quad $K\subset\R^2\setminus{\overline{\mathcal{I}}(\mathcal{O})}$.}
 \end{equation*}
 Let
 $\Pi$ be the connected component of $\cP_0$ containing $\cO$.
 If (i) holds, then 
 $K\subset{\overline{\mathcal{I}}\left(\Sigma_{in}(\Pi)\right)}$;
 and if (ii) holds, then
 $K\subset\R^2\setminus\mathcal{I}\left(\Lambda_{out}(\Pi)\right)$
 (in particular, $\Pi$ is necessarily bounded in this case).
\end{lemma}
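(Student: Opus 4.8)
The plan is to deduce all five choices of $K$ from a single statement about one entire solution, and then to prove that statement by the squeezing mechanism. First I would reduce as follows. In each of the five cases every point of $K$ has the form $(\vp(y),\vp_x(y))$ for some $y\in\R$ and some $\vp$ lying in the relevant set ($\om(U)$, $\Omega(U)$, $\al(U)$, $A(U)$, or $\{U(\cdot,t):t\in\R\}$). By the invariance properties recorded in Section~\ref{sec:inv}, through each such $\vp$ there passes an entire solution $\wtilde U$ with $\wtilde U(\cdot,0)=\vp$ all of whose spatial trajectories again lie in $K$; moreover the a priori bound \eqref{eq:2.15} is inherited by $\wtilde U$ with the same $N$, since the number of zeros cannot increase in a $C^1_{loc}$-limit of translates of $U$. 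Thus it suffices to prove the following core statement: if $\wtilde U$ is a bounded entire solution satisfying \eqref{eq:2.15} with $\tau(\wtilde U(\cdot,t))\subset\ol{\cI}(\cO)$ for all $t$ (case (i)), then in fact $\tau(\wtilde U(\cdot,t))\subset\ol{\cI}(\Sii(\Pi))$ for all $t$. The case (ii) statement, with $\ol{\cI}(\cO)$ and $\Sii(\Pi)$ replaced by $\R^2\setminus\cI(\cO)$ and $\R^2\setminus\cI(\Lao(\Pi))$, is then proved by the mirror-image argument.

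Next I would set up the contradiction for the core statement in case (i). Suppose some trajectory point lies outside $\ol{\cI}(\Sii(\Pi))$. The periodic orbits of $\Pi$ are linearly nested (Lemma~\ref{le:p0}(v)) and shrink down to $\ol{\cI}(\Sii(\Pi))$ as their energy decreases; let $c^\ast$ be the supremum of $H$ over the bounded set $\bigcup_t\tau(\wtilde U(\cdot,t))$, and let $\cO^\ast=\{H=c^\ast\}\cap\Pi$ be the corresponding periodic orbit (the contradiction hypothesis guarantees that $c^\ast$ exceeds the level of $\Sii(\Pi)$, so $\cO^\ast$ is a genuine nonstationary periodic orbit). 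By construction every trajectory lies in $\ol{\cI}(\cO^\ast)$, and $\cO^\ast$ is the innermost orbit with this property, so some trajectory of $\wtilde U$ meets $\cO^\ast$; after a time shift we may take the contact time to be $0$, say at $(y_0,0)$. Because the trajectory stays inside $\ol{\cI}(\cO^\ast)$ the contact is tangential. Let $\vp^\ast$ be the periodic steady state whose orbit is $\cO^\ast$, phase-shifted so that $(\vp^\ast(y_0),\vp^\ast_x(y_0))$ is the contact point; then $w:=\wtilde U-\vp^\ast$ solves a linear equation of the form \eqref{eq:lin} and has a multiple zero at $(y_0,0)$. (If the contact is only attained in the limit, Lemma~\ref{le:robust} produces such a multiple zero for a limiting entire solution, which still satisfies all the hypotheses.)

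The remaining, and main, difficulty is to turn this single tangency into a contradiction with the uniform bound \eqref{eq:2.15}. The obstacle is that $\vp^\ast$ is spatially periodic, so $z(w(\cdot,t))$ is infinite and Lemma~\ref{le:zero}(iii) cannot be applied on all of $\R$; the comparison must be localized. Here I would use the unstable equilibrium $(\be,0)\in\cI(\cO^\ast)$ furnished by Lemma~\ref{le:p0}(iv): since $\vp^\ast$ crosses the level $\be$ exactly twice per period while, by \eqref{eq:2.15}, $\wtilde U(\cdot,t)-\be$ has at most $N$ zeros, the trajectory of $\wtilde U(\cdot,t)$ can wind around $(\be,0)$ only boundedly often, whereas that of $\vp^\ast$ winds indefinitely. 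Cutting off at the finitely many points where $\wtilde U(\cdot,t)=\be$, and using the time-dependent zero-number Lemma~\ref{le:zerot} to keep the endpoints admissible, yields a finite localized count $z_{I(t)}(w(\cdot,t))$ which is bounded below, for every $t$, by the number of sign changes $w$ is forced to undergo inside the window by the oscillation of $\vp^\ast$ between its extremal values (which $\wtilde U$, lying in $\ol{\cI}(\cO^\ast)$, cannot exceed). The multiple zero at $(y_0,0)$ then forces, via Lemma~\ref{le:zero}(iii), a strict drop of this localized count below its forced lower bound for $t>0$, which is impossible. This is exactly the squeezing estimate of \cite{P:entire}, carried out in the present notation in \cite[Lemma~2.13]{p-Pauthier2}, and I expect the careful bookkeeping of the localization to be the only genuinely delicate point. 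Applying the core statement to the entire solution $\wtilde U$ attached to each point of $K$ gives the asserted inclusions; in case (ii) the outermost-orbit version of the argument squeezes $K$ out to $\Lao(\Pi)$, whose very existence (Lemma~\ref{le:p0}(iii)) forces $\Pi$ to be bounded.
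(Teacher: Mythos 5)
First, a point of reference: the paper itself offers no proof of this lemma --- it is imported verbatim from \cite[Lemma 2.13]{p-Pauthier2} and described there as a variant of the Squeezing Lemma of \cite{P:entire}. So the only thing to compare against is that cited proof, and at the decisive moment your proposal, too, falls back on citing those same sources. The parts that are genuinely yours are fine: the reduction of all five choices of $K$ to a single entire solution via the invariance of the limit sets (with \eqref{eq:2.15} inherited under $C^1_{loc}$ limits of translates), and the definition of the extremal orbit $\cO^\ast$ via the supremum of $H$ followed by a limiting tangency, are correct --- the nested orbits of $\Pi$ have pairwise distinct, monotonically ordered energies, so $\cO^\ast$ is well defined --- and this is indeed the standard opening of squeezing arguments.

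The gap is in the step you dismiss as ``careful bookkeeping''; it is not bookkeeping, it is the theorem. Two things go wrong as sketched. (1) Finiteness: the window of your localization containing the tangency point may be unbounded (for instance $V-\be$ may have no zeros at all, or $y_0$ may lie beyond the last one). Say the window is $(\ell,\infty)$ with $V>\be$ there: unless $\limsup_{x\to\infty}V(x,0)=\max\psi$, one has $w=V-\psi>0$ on the infinitely many intervals where $\psi<\be$ and $w<0$ near the infinitely many maxima of $\psi$, so $z_{I}(w(\cdot,0))=\infty$ and Lemma \ref{le:zero}(iii) simply cannot be invoked. This is exactly why \eqref{eq:2.15} does not finitize intersection numbers with periodic steady states in any easy way. (2) The contradiction itself is a non sequitur: Lemma \ref{le:zero}(iii) gives $z_{I}(w(\cdot,t_1))>z_{I}(w(\cdot,0))\ge z_{I}(w(\cdot,t_2))$ for $t_1<0<t_2$, i.e.\ a strict decrease relative to the \emph{pre-tangency} count, not a decrease below your forced lower bound $L$; since nothing prevents the pre-tangency count from being $L+1$ or larger, no contradiction follows. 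To close the argument one must know the count is already pinned at its minimum before the tangency, and establishing that is the real content of the cited Squeezing Lemma. (Smaller unaddressed points: the cut points $a(t)$ where $V=\be$ need not satisfy $w(a(t),t)\ne 0$, nor persist continuously through drop times of $z(V(\cdot,t)-\be)$, as Lemma \ref{le:zerot} requires; and case (ii) is not a mirror image --- the boundedness of $\Pi$ there is a conclusion requiring proof, e.g.\ by applying case (i) to a large orbit enclosing the bounded set $K$, not a consequence of the ``existence'' of $\Lao(\Pi)$.) So as a self-contained argument the proposal fails precisely at its core; what remains is, in substance, the same citation the paper makes.
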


Finally, we recall the following well known result concerning the
solutions in $\cV$ (the proof can be found in \cite[Theorem
5.5.2]{Volpert3}, for example).
\begin{lemma}\label{le:limits}
  Assume that $u_0\in \cV$. Then the limits
  \begin{equation}\label{limits-t}
    \theta_-(t):=\lim_{x\to-\infty}u(x,t),\qquad \theta_+(t):=\lim_{x\to\infty}u(x,t)
  \end{equation}
  exist for all $t> 0$ and are solutions of the following
  initial-value problems:
  \begin{equation}\label{eq:3.2}
    \dot{\theta}_\pm=f(\theta_\pm),\qquad \theta_\pm(0)=u_0(\pm\infty).
  \end{equation}
\end{lemma}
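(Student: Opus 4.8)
The plan is to prove the $+\infty$ case (the $-\infty$ case being symmetric under $x\mapsto -x$) by a translation--compactness argument that exploits the spatial translation invariance of \eqref{eq:1}. Write $\theta^+:=u_0(+\infty)$ and let $\eta$ denote the solution of the ODE $\dot\eta=f(\eta)$, $\eta(0)=\theta^+$; equivalently, $\eta(t)$ is the spatially homogeneous solution of \eqref{eq:1} with constant initial datum $\theta^+$, which exists on $[0,T(u_0))$ by comparison (ODE comparison keeps $\eta$ between the constants $\inf u_0$ and $\sup u_0$ evolved under the ODE, hence within the range of $u$). I would fix $T<T(u_0)$, so that $M:=\sup_{\R\times[0,T]}|u|<\infty$ and $f$ is Lipschitz on $[-M,M]$, and aim to show $u(x,t)\to\eta(t)$ as $x\to+\infty$ for each $t\in(0,T)$.

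First I would take an arbitrary sequence $x_n\to+\infty$ and set $u_n(x,t):=u(x+x_n,t)$. Each $u_n$ solves \eqref{eq:1}, is bounded by $M$ on $\R\times[0,T]$, and has initial datum $u_n(\cdot,0)=u_0(\cdot+x_n)$, which converges to the constant $\theta^+$ locally uniformly (indeed uniformly on every half-line bounded below), since $u_0(+\infty)=\theta^+$. The next step is to pass to the limit and identify $u_n\to\eta$ locally uniformly on $\R\times[0,T]$. The cleanest route is Duhamel's formula: writing $G$ for the heat kernel and using $\int_\R G(\cdot,t)=1$,
\[
 u_n(x,t)-\eta(t)=\int_\R G(x-y,t)\bigl(u_0(y+x_n)-\theta^+\bigr)\,dy
 +\int_0^t\!\!\int_\R G(x-y,t-s)\bigl(f(u_n(y,s))-f(\eta(s))\bigr)\,dy\,ds.
\]
A Gaussian--tail estimate shows the first term tends to $0$ uniformly for $(x,t)$ in compact subsets of $\R\times[0,T]$, including up to $t=0$: splitting the $y$-integral at $y=R_0-x_n$, where $R_0$ is chosen so that $|u_0(z)-\theta^+|<\vare$ for $z>R_0$, the near part is $\le\vare$ and the far part is bounded by $2M$ times a Gaussian tail that vanishes as $x_n\to\infty$. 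A Gronwall argument on a localized supremum, controlling the nonlinear term through the Lipschitz constant of $f$ on $[-M,M]$ and the rapid decay of $G$ away from the diagonal, then forces $u_n\to\eta$ locally uniformly.

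Since the limit $\eta$ is independent of the chosen sequence, the \emph{whole} family converges: $u(\cdot+x_n,\cdot)\to\eta$ locally uniformly for every $x_n\to+\infty$. Evaluating at $x=0$ gives $u(x_n,t)\to\eta(t)$ for each $t\in(0,T)$; as the sequence is arbitrary, this yields $\lim_{x\to+\infty}u(x,t)=\eta(t)$, so the limit $\theta_+(t)$ exists, equals $\eta(t)$, and solves \eqref{eq:3.2} with $\theta_+(0)=u_0(+\infty)$. Alternatively, one may replace the Duhamel/Gronwall step by interior parabolic estimates to extract a $C^{2,1}_{loc}$-convergent subsequence of $\{u_n\}$ whose limit solves \eqref{eq:1}, and then use comparison with spatially homogeneous sub/supersolutions to pin down its initial datum.

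The main obstacle I anticipate is precisely that the convergence $u_0(\cdot+x_n)\to\theta^+$ is only \emph{local}: the far field of $u_0(\cdot+x_n)$ (coming from $y\approx-x_n$) ranges over all values of $u_0$ and does not converge to $\theta^+$. Consequently one cannot invoke a global maximum principle or global continuous dependence on $\R$; the whole point is to use parabolic smoothing---the Gaussian localization of $G$, i.e.\ the effectively finite speed of propagation on bounded time intervals---to show that this uncontrolled far-field influence is negligible on every fixed compact set, both for the linear term and, via Gronwall, for the nonlinear one.
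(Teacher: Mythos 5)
The paper itself gives no proof of this lemma: it is quoted as a known result with a pointer to \cite[Theorem 5.5.2]{Volpert3}. Your self-contained argument is therefore necessarily a different route, and its core is sound: translating by $x_n\to+\infty$, writing the Duhamel identity against the heat kernel (using $\int_\R G(\cdot,t)\,dy=1$ for the ODE solution $\eta$), killing the far-field contribution $y\approx -x_n$ by Gaussian tails, and then absorbing the nonlinear term via the Lipschitz constant of $f$ is a standard and correct way to get $u(\cdot+x_n,\cdot)\to\eta$ locally uniformly, hence the existence of $\theta_+(t)=\eta(t)$. One technical warning on the Gronwall step: the ``localized supremum'' must be localized correctly. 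The supremum over the half-line where the initial data are $\vare$-close to $\theta^+$ does \emph{not} close (points near its left edge receive an $O(1)$ contribution from the uncontrolled far field, and Gronwall then propagates this $O(1)$ error everywhere). You need either a weighted supremum anchored at the fixed compact set, with the weight's exponential rate beaten by the factor $\int G(z,\tau)e^{\mu|z|}dz\le 2e^{\mu^2\tau}$, or---cleaner and Gronwall-free---the comparison bound $|u_n(\cdot,t)-\eta(t)|\le e^{Lt}\,G(\cdot,t)*|u_n(\cdot,0)-\theta^+|$, which follows because the right-hand side is a supersolution of the linear equation $v_t=v_{xx}+c_n v$, $|c_n|\le L$, satisfied by $u_n-\eta$.

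The one genuinely incorrect step is the parenthetical justification that $\eta$ exists on $[0,T(u_0))$ ``by comparison.'' Squeezing $\eta$ between the ODE evolutions of $\inf u_0$ and $\sup u_0$ proves nothing past the lifespan of those two ODE solutions, and they can blow up strictly before $T(u_0)$: for a superlinear locally Lipschitz $f$ (say $f(u)=u^2$) and a very narrow bump $u_0$ of height $1$, the ODE solution from $\sup u_0=1$ blows up at $t=1$, while diffusion immediately flattens the bump and the PDE solution survives for a time of order $\delta^{-2}$, where $\delta$ is the width. In the paper's setting this does not matter, since the standing hypothesis (MF) makes $f$ globally Lipschitz, so all ODE solutions are global; and even in the general locally Lipschitz setting the gap is self-repairing: run your main argument on $[0,\min(T,T_\eta))$, where $T_\eta$ is the maximal existence time of $\eta$; it gives $\eta(t)=\lim_{x\to\infty}u(x,t)$, hence $|\eta|\le M$ there, which rules out blow-up of $\eta$ at any $T_\eta\le T$ and forces $T_\eta>T$. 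But as written, the comparison remark is not a proof, and you should replace it by this bootstrap (or by invoking (MF)).
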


\section{Entire solutions in
  $\omega(u)$}\label{sec:sptraj}
Throughout this section, we assume---in addition to the standing
hypotheses (ND), (MF), and \eqref{eq:17}---
that $u_0\in \cV$, $u_0(\pm\infty)=0$, and the solution of
\eqref{eq:1}, \eqref{ic:1} is bounded and satisfies (NC). We
reserve the symbol $u(x,t)$ for this fixed solution. 

In the following lemma, we derive some consequences of the
assumption (NC) concerning entire solutions $U(\cdot,t)$ in $\om(u)$.

\begin{lemma}
  \label{le:entirein0} Let $U$
  be an entire solution of \eqref{eq:1} such that $U(\cdot,t)\in 
  \om(u)$ for all $t\in\R$. Then either $U_x\equiv 0$ or $U$ has the following properties:
  \begin{itemize}
  \item[(i)] Each of the functions $U(\cdot,t)$, $U_x(\cdot,t)$ has
    only finitely many zeros, all of them simple, and the number of
    these zeros is bounded by a constant independent of $t$.
    \item[(ii)] For each $t\in \R$, the following limits exist:
      \begin{equation}\label{eq:29a}
    \Theta_-(t):=\underset{x\to-\infty}{\lim}U(x,t),\quad
    \Theta_+(t):=\underset{x\to\infty}{\lim}U(x,t).
  \end{equation}
   \item[(iii)] For each $t\in \R$, the spatial trajectory
     $\tau(U(\cdot,t))$ is a simple curve in $\R^2$, that is, it has
     no self-intersections.
   \item[(iv)]  If \eqref{eq:5} holds with $\Pi=\Pi_0$, then,
     for any $t\in \R$, the function $U(\cdot,t)$ has no
    positive local minima and no negative local maxima. 
  \end{itemize}
\end{lemma}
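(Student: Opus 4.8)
The plan is to argue by contradiction, ruling out a positive local minimum of $U(\cdot,t)$ (the case of a negative local maximum being symmetric, with $(0,\hat q)$ replaced by $(\hat p,0)$ and ``supersolution'' by ``subsolution''). First I would record the phase-plane input. By \eqref{periodicorbits}, each nonstationary periodic orbit in $\Pi_0$ meets the $u$-axis at its leftmost point $(p,0)$ and rightmost point $(q,0)$; since $v_x=-f(u)$ in \eqref{eq:sys}, a rightmost (resp.\ leftmost) turning point forces $f(q)>0$ (resp.\ $f(p)<0$). Letting these points range over the orbits foliating $\Pi_0$ shows that $f>0$ on $(0,\hat q)$ and $f<0$ on $(\hat p,0)$, where $[\hat p,\hat q]$ is the $u$-extent of $\overline{\mathcal{I}}(\Lambda_{out}(\Pi_0))$. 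Because \eqref{eq:5} confines every spatial trajectory to $\Pi_0=\mathcal{I}(\Lambda_{out})\setminus\{(0,0)\}$, each critical value of $U(\cdot,t)$ lies in $(\hat p,0)\cup(0,\hat q)$ (never $0$), and $U$ itself stays in $(\hat p,\hat q)$.

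Next I would promote part (i) to statements uniform in $t$. Since the zeros of $U_x(\cdot,t)$ are finite in number and all simple for every $t$, no multiple zero ever occurs, so $z(U_x(\cdot,t))$ never drops and is therefore constant on $\R$; the critical points are then disjoint smooth branches $x_1(t)<\dots<x_k(t)$, each of a fixed type (minimum or maximum) whose value avoids $0$ and hence keeps a fixed sign. Suppose, for contradiction, that some branch $x_i(t)$ is a positive local minimum, so $a_i(t):=U(x_i(t),t)\in(0,\hat q)$ and $U_{xx}(x_i(t),t)>0$ for all $t$. Differentiating and using $U_x(x_i(t),t)=0$ yields the key identity
\[
\dot a_i(t)=U_t(x_i(t),t)=U_{xx}(x_i(t),t)+f(a_i(t))>f(a_i(t))>0 ,
\]
so $a_i$ is an entire, $(0,\hat q)$-valued, \emph{strict supersolution} of the scalar equation $\dot\xi=f(\xi)$.

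From here ODE comparison does most of the work. Being increasing and bounded, $a_i$ has limits $L_\pm=\lim_{t\to\pm\infty}a_i(t)$; evaluating the identity along times where $\dot a_i\to0$ and using $U_{xx}>0$ gives $f(L_\pm)\le0$, so $L_-=0$ and $L_+=\hat q$ (as $f>0$ on $(0,\hat q)$). If $(\hat q,0)$ is a regular point of $\Lambda_{out}$ -- the rightmost turning point of a homoclinic outer loop -- then $f(\hat q)>0$, and comparing $a_i$ with the genuine solution of $\dot\xi=f(\xi)$ through the same value, which reaches and crosses $\hat q$ in finite time, forces $a_i(t)>\hat q$ at some finite $t$, contradicting $a_i<\hat q$. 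This settles the turning-point case.

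The remaining, and hardest, case is when $(\hat q,0)$ is a \emph{limit equilibrium} of $\Lambda_{out}$: then $f(\hat q)=0$ and, $\hat q$ being a saddle of \eqref{eq:sys}, $f'(\hat q)<0$, so $\hat q$ is a stable rest point of $\dot\xi=f(\xi)$ and the supersolution inequality no longer drives $a_i$ past the boundary. Here $a_i(t)\nearrow\hat q$, and since any adjacent local maximum is squeezed between $a_i$ and $\hat q$, a portion of the graph of $U(\cdot,t)$ collapses onto the constant $\hat q$ as $t\to\infty$. The plan is to pass to a space--time translated limit $V\in\Omega(U)$ and apply the strong maximum principle (via Harnack) to $\hat q-U\ge0$, which solves a linear parabolic equation because $f(\hat q)=0$, concluding $V\equiv\hat q$ and hence $(\hat q,0)\in\tau(\Omega(U))$. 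A contradiction should then follow from the paper's squeezing and chain machinery (Lemma~\ref{le:squeezing}, whose hypothesis \eqref{eq:2.15} holds for the sole unstable equilibrium $0$ in range by the uniform bound on $z(U(\cdot,t))$ from part (i)): a limit profile equal to the boundary equilibrium $\hat q\in\Lambda_{out}$, approached from inside the open region $\Pi_0$, is incompatible with the confinement of $\tau(\Omega(U))$ dictated by that machinery. I expect this boundary-equilibrium case to be the crux: the transparent ODE-supersolution contradiction breaks down precisely at a stable equilibrium lying on $\Lambda_{out}$, and closing it requires the parabolic maximum principle together with the squeezing/chain-recurrence tools rather than phase-plane bookkeeping alone.
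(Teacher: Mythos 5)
Your proposal addresses only part (iv) of the lemma. Parts (i)--(iii) are nowhere proved; in fact your argument explicitly invokes part (i) as an input (``since the zeros of $U_x(\cdot,t)$ are finite in number and all simple for every $t$\dots''). In the paper, (i)--(iii) are the real content of the proof, and they are precisely where the hypothesis (NC) on $u$ enters: the convergence $u(\cdot,\cdot+t_n)\to U$ in $C^1_{loc}$ together with Lemma \ref{le:robust} shows that a multiple zero of $U_x(\cdot,t_0)$ (or, for (iii), of a difference $U(\cdot+\eta,t_0)-U(\cdot,t_0)$) would force multiple zeros of $u_x(\cdot,t)$ (resp.\ of $u(\cdot+\eta,t)-u(\cdot,t)$) at a sequence of times tending to $+\infty$; each occurrence makes the corresponding zero number drop (Lemma \ref{le:zero}(iii)), so that zero number would be infinite for all $t>0$, contradicting (NC). None of this appears in your proposal, so the bulk of the lemma is simply missing.

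Concerning (iv), the paper only cites [Lemma 3.11(ii)] of the predecessor paper, so a self-contained proof would be of interest, and your supersolution identity $\dot a_i=U_{xx}(x_i(t),t)+f(a_i(t))>f(a_i(t))>0$ is a nice observation which does settle the case $f(\hat q)>0$ (homoclinic tip) --- modulo one unjustified step: absence of multiple zeros does \emph{not} make $z(U_x(\cdot,t))$ constant, since zeros can escape to $x=\pm\infty$, so the forward-in-time persistence of the branch $x_i(t)$, which your ODE comparison requires, needs a separate argument. The fatal gap is in the case you yourself call the crux, $f(\hat q)=0$, $f'(\hat q)<0$: there the endpoint of your argument, $(\hat q,0)\in\tau(\Omega(U))$, is not a contradiction. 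The squeezing/chain machinery does not forbid limit profiles on $\Lambda_{out}$; it predicts them: Lemma \ref{le:basicrel}(ii) guarantees a steady state with trajectory in $\{(0,0)\}$ or in $\Lambda_{out}$ inside $A(U)\cup\Om(U)$, the relevant conclusion of Lemma \ref{le:squeezing} is $K\subset\R^2\setminus\mathcal{I}(\Lambda_{out})$ (i.e.\ confinement \emph{onto} the loop, which is the goal of Proposition \ref{prop:entire}, not an absurdity), and Lemma \ref{le:conv} exhibits entire solutions of this class converging uniformly to $\ga_\pm\in\Lambda_{out}$. This failure is structural rather than reparable by more care: (iv) is not a consequence of \eqref{eq:5} and (i)--(iii) alone --- which is why, in Section \ref{sec:entire}, the paper must list (civ) as a hypothesis separate from (ci)--(ciii) --- and a ``dimpled'' entire connection rising from $0$ to $\hat q$, with finitely many simple critical points and trajectories in $\Pi_0$, is not excluded by your tools. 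Ruling it out requires using that $U$ is a locally uniform limit of the single solution $u$, a fact your argument for (iv) never exploits.
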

\begin{proof} Assume that $U_x\not\equiv 0$.

  We know (cp. Section \ref{sec:inv}) that for some sequence
  $t_n\to\infty$  we have
  \begin{equation}
      \label{eq:54}
      u(\cdot,\cdot+t_n)\to U, \quad u_x(\cdot,\cdot+t_n)\to U_x
    \end{equation}
    with the convergence in $L_{loc}^\infty(\R^2)$ in both cases.
    Moreover, since $f$ is Lipschitz, the function
    $u_x$ is bounded in $C^{1+\al}(\R\times[1,+\infty))$ for some
    $\al\in(0,1)$. Therefore, possibly after $\{t_n\}$ is replaced by
    a subsequence, the second convergence in \eqref{eq:54} takes  place in
    $C_{loc}^1(\R^2)$ as well.

  We now prove that all zeros of
    $U_x(\cdot,t)$ are simple. Suppose for a contradiction that $x_0$
    is multiple zero of  $U_x(\cdot,t_0)$  for some $t_0$.
   It then follows from  \eqref{eq:54} and Lemma
    \ref{le:robust}, that there is a sequence $\tau_n\to0$ such that
    $u_x(\cdot,\cdot+ t_n+\tau_n)$ has a multiple
    zero. Thus, by Lemma \ref{le:zero}, the zero number
    $z(u_x(\cdot,t))$ drops at $t=t_n+\tau_n$. Since $
    t_n+\tau_n\to\infty$,  the monotonicity of
    the zero number gives  
    $z(u_x(\cdot,t))=\infty$ for all $t>0$, in contradiction to (NC).
    The contradiction proves that the zeros of $U_x(\cdot,t)$ are
    indeed simple for any $t\in\R$.

    From  \eqref{eq:54} and (NC),
    it  now follows that  $z(U_x(\cdot,t))$ is bounded by a constant
    independent of  $t$. Consequently, by the mean value theorem,
    $z(U(\cdot,t))$ is also bounded by a constant
    independent of  $t$. The proof of statement (i) is complete.

    Statement (i) implies that for each $t$ the (bounded) function
    $U(\cdot,t)$ is monotone near $\pm\infty$. This gives (ii).

    To prove statement (iii), we go by contradiction. Suppose that for
    some 
    $t_0\in\R$ the curve $\tau(U(\cdot,t_0))$ is not simple:
    there are  $x_0,\eta\in\R$ with $\eta\ne 0$, such that
    \begin{equation}
      \label{eq:47}
      (U(x_0,t_0),U_x(x_0,t_0)) = (U(x_0+\eta,t_0),U_x(x_0+\eta,t_0)).
    \end{equation}
Consider the function $v(x,t):=U(x+\eta,t)-U(x,t)$. It is a solution
of a linear parabolic equation \eqref{eq:lin}, and \eqref{eq:47} means
that $x_0$ is a multiple zero of $v(\cdot,t_0)$. By \eqref{eq:54} and Lemma
\ref{le:robust}, there is a sequence $t_n\to\infty$ such that the
function $u(\cdot+\eta,t_n)-u(\cdot,t_n)$ has a multiple zero (near
$x_0$). Therefore, the same arguments  as the ones used above
for the function $u_x$ show that  for all $t>0$ one has
$z(u(\cdot+\eta,t)-u(\cdot,t))=\infty$.  
By Lemma \ref{le:zero}, all zeros of $u(\cdot+\eta,t)-u(\cdot,t)$ are
isolated for $t>0$. Therefore, given any $t>0$, 
 there is a sequence $x_n$ with $|x_n|\to\infty$ such that 
$u(x_n+\eta,t)-u(x_n,t)=0$. Now, for each $n$, the function $u(\cdot,t)$
has a critical point between $x_n$ and $x_n+\eta$, so it has
infinitely many critical points. This contradiction to condition
(NC) proves statement (iii).

For statement (iv), we refer the reader to \cite[Lemma 3.11(ii)]{p-Pauthier2}. 
\end{proof}

\section{Entire solutions with spatial
  trajectories in  $\Pi_0$}
\label{sec:entire}
In this section,
we investigate entire solutions $U$
of \eqref{eq:1} such that 
\begin{itemize}
\item[(c0)] \begin{equation}
      \label{eq:5a}
      \underset{t\in\R}{{\textstyle \bigcup}}\tau\left(
        U(\cdot,t)\right)\subset\Pi_0. 
    \end{equation}
  \end{itemize}
Moreover, we will assume that if $U_x\not\equiv 0$, then $U$
satisfies the following conditions:
\begin{itemize}
  \item[(ci)] There is an integer $m$ such that
    \begin{equation}
      \label{eq:59}
      z(U(\cdot,t)),\ z(U_x(\cdot,t))\le m\quad (t\in \R)
    \end{equation}
    and all zeros of 
    $U(\cdot,t)$ and $U_x(\cdot,t)$ are simple.
    \item[(cii)] The following limits exist
      \begin{equation}\label{eq:29b}
    \Theta_-(t):=\underset{x\to-\infty}{\lim}U(x,t),\quad
    \Theta_+(t):=\underset{x\to\infty}{\lim}U(x,t).
  \end{equation}
   \item[(ciii)] For each $t\in \R$, the spatial trajectory
     $\tau(U(\cdot,t))$ is a simple curve in $\R^2$ (it has
     no self-intersections).
   \item[(civ)]  For any $t\in \R$, the function $U(\cdot,t)$ has no
    positive local minima and no negative local maxima. 
  \end{itemize}  

As shown in Lemma \ref{le:entirein0}, if the entire solution $U$
satisfying (c0) comes from an $\omega$-limit set:
$U(\cdot,t)\in \om(u)$, where $u$ is as in Section \ref{sec:sptraj},
then either $U_x\equiv 0$ or (ci)--(civ) hold.
Therefore the following
proposition, which is the main result of this section,
implies Proposition \ref{prop:missing} (and thereby Theorem
\ref{thm:1}).   
\begin{proposition}
  \label{prop:entire}
  Let $U$ be an entire solution satisfying \emph{(c0)} such that
  either $U_x\equiv 0$ or conditions \emph{(ci)--(civ)} hold. Then
  \begin{equation}
      \label{eq:6b}
      \alpha(U)=\{0\},\qquad
      \tau\left(\omega(U)\right)\subset\Lambda_{out}(\Pi_0). 
    \end{equation}      
\end{proposition}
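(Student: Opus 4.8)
The plan is to deal first with the trivial case $U_x\equiv 0$ and then, assuming (ci)--(civ), to pin down $\alpha(U)$ and $\omega(U)$ by feeding suitable inclusions into the Squeezing Lemma (Lemma~\ref{le:squeezing}). Write $(\hat p,0),(\hat q,0)$ for the intersections of $\Lambda_{out}(\Pi_0)$ with the $u$-axis and $c_{\mathrm{out}}:=H|_{\Lambda_{out}(\Pi_0)}$. If $U_x\equiv 0$ then $U(x,t)=\xi(t)$ with $\dot\xi=f(\xi)$, and (c0) forces $\xi(t)\in(\hat p,\hat q)$ for all $t$. Since $0$ is the unique equilibrium of $\dot\xi=f(\xi)$ in $\mathcal{I}(\Lambda_{out}(\Pi_0))$ (all equilibria lie on the $u$-axis and $\Pi_0$ contains none), and it is unstable while the endpoints of $\Lambda_{out}(\Pi_0)$ that are equilibria are stable, a phase--line analysis gives $\xi(-\infty)=0$ and $\xi(+\infty)\in\{\hat p,\hat q\}$, which is exactly \eqref{eq:6b}.

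In the main case I first record the spatial shape of $U(\cdot,t)$. By (ci) each critical point of $U(\cdot,t)$ is nondegenerate and, since the zeros of $U(\cdot,t)$ are simple, no critical value is $0$; together with (civ) this means every critical value is either a positive local maximum or a negative local minimum, so the critical values alternate in sign as $x$ increases. Hence $U(\cdot,t)$ crosses $0$ between consecutive critical points and, by (ciii), $\tau(U(\cdot,t))$ is a simple arc winding monotonically about $(0,0)$, with both ends $(\Theta_\pm(t),0)$ on the $u$-axis. By Lemma~\ref{le:limits} the $\Theta_\pm(t)$ solve $\dot\theta=f(\theta)$ and, by (c0), take values in $[\hat p,\hat q]$; using that $0$ is the only repeller there while $\hat p,\hat q$ are attractors, their backward and forward limits are classified (with Lemma~\ref{le:BPQ}, applied to $\theta(t)=\Theta_+(t)$ or a nearby level, controlling the monotone tail of $U(\cdot,t)$ as $t\to\pm\infty$). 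Next, a short zero-number argument excludes nonconstant periodic steady states from $\alpha(U)$ and $\omega(U)$: such a state $\phi_c$ has infinitely many \emph{simple} zeros (its orbit $\mathcal{O}_c\subset\Pi_0$ surrounds $(0,0)$, so $\phi_c=0$ with $\phi_{c,x}=\pm\sqrt{2c}\neq 0$ periodically), so $C^1_{loc}$-convergence $U(\cdot,t_n)\to\phi_c$ would force $z(U(\cdot,t_n))>m$, contradicting (ci). By Theorem~\ref{thm:GS} the only steady states that can lie in $\alpha(U)$ or $\omega(U)$ are thus $0$ (trajectory $\{(0,0)\}$) and the loop solution (trajectory $\Lambda_{out}(\Pi_0)$).

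The conclusion \eqref{eq:6b} is then reduced, through Lemma~\ref{le:squeezing} applied to $K=\tau(A(U))$ and $K=\tau(\Omega(U))$, to two amplitude statements. Note first that since $U(x,t)\in(\hat p,\hat q)$, the value $0$ is the only unstable equilibrium meeting the range of $U$, so the bound $z(U(\cdot,t))\le m$ of (ci) is exactly the zero-number hypothesis required by Lemma~\ref{le:squeezing}. It then suffices to show: (a) backward in time the trajectories eventually lie strictly inside some fixed periodic orbit $\mathcal{O}\subset\Pi_0$, whence case (i) of Lemma~\ref{le:squeezing} gives $\tau(A(U))\subset\overline{\mathcal{I}}(\Sigma_{in}(\Pi_0))=\{(0,0)\}$ and therefore $\alpha(U)\subset A(U)=\{0\}$; and (b) forward in time the trajectories eventually lie outside some fixed periodic orbit $\mathcal{O}'\subset\Pi_0$, whence case (ii) gives $\tau(\Omega(U))\subset\R^2\setminus\mathcal{I}(\Lambda_{out}(\Pi_0))$, which together with $\tau(\Omega(U))\subset\overline{\mathcal{I}}(\Lambda_{out}(\Pi_0))$ yields $\tau(\omega(U))\subset\tau(\Omega(U))\subset\Lambda_{out}(\Pi_0)$.

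The heart of the matter---and the step I expect to be the main obstacle---is establishing the two amplitude statements, i.e. the time orientation ``from the center to the loop''. Here, unlike in the case of a nontrivial inner chain, the inner chain is the \emph{unstable} equilibrium $0$, so there is no a priori floor on how small the oscillation of $U(\cdot,t)$ can be, the limits $\Theta_\pm(t)$ may themselves tend to $0$, and one checks that the natural energy $\sup_x H(U(x,t),U_x(x,t))$ is not monotone in $t$, so amplitude monotonicity cannot be read off directly. I would instead compare $U$ with the nested family of periodic steady states $\phi_c(\cdot-s)$, $c\in(0,c_{\mathrm{out}})$, $s\in\R$: each $W_{c,s}(x,t):=U(x,t)-\phi_c(x-s)$ solves a linear equation, so $z(W_{c,s}(\cdot,t))$ is nonincreasing by Lemma~\ref{le:zero}, and the position of $\tau(U(\cdot,t))$ relative to $\mathcal{O}_c$ is encoded in these intersection numbers (bounded via (ci) and the simple, monotonically winding shape established above). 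The forward statement (b) should follow from the instability $f'(0)>0$, which makes the small-amplitude regime repelling and prevents $U(\cdot,t)$ from accumulating on $0$ as $t\to+\infty$; the backward statement (a) is the mirror image, using that $\tau(U(\cdot,t))$ cannot accumulate on $\Lambda_{out}(\Pi_0)$ as $t\to-\infty$. Combining these with the exclusion of periodic steady states and the classification of $\Theta_\pm$ completes the proof of \eqref{eq:6b}.
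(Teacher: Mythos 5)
Your setup (the $U_x\equiv 0$ case, the winding shape of $\tau(U(\cdot,t))$, the exclusion of nonconstant periodic steady states via (ci), the verification of the zero-number hypothesis of Lemma \ref{le:squeezing}) is sound and agrees with the paper. But the reduction of \eqref{eq:6b} to your two ``amplitude statements'' (a) and (b) is a genuine gap: both statements are \emph{false} under the hypotheses of the proposition, in exactly the cases that constitute the real difficulty. For (b): if one of the spatial limits is identically zero, $\Theta_+\equiv 0$ say (the paper's cases (T2), (T3)), then for \emph{every} $t$ the trajectory $\tau(U(\cdot,t))$ contains points arbitrarily close to $(0,0)$, so it can never lie outside a periodic orbit $\mathcal{O}'\subset\Pi_0$; worse, taking $t_n\to\infty$ and then $x_n\to\infty$ shows $0\in\Omega(U)$, so the conclusion $\tau(\Omega(U))\subset\R^2\setminus\mathcal{I}(\Lambda_{out}(\Pi_0))$ you want to extract from Lemma \ref{le:squeezing} is simply not true --- only the statement about the \emph{unshifted} set $\omega(U)$ holds, and that is precisely why it requires a separate argument. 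Symmetrically, for (a): if $\Theta_\pm$ are constant and equal to a loop equilibrium (case (T1), e.g.\ $\Theta_\pm\equiv\hat p$), then for every $t$ the trajectory has points arbitrarily close to $(\hat p,0)\in\Lambda_{out}(\Pi_0)$, so it is never contained in $\mathcal{I}(\mathcal{O})$ for any periodic orbit $\mathcal{O}$. Here $\alpha(U)=\{0\}$ holds only in the locally uniform sense, because the part of the solution anchored near $\hat p$ escapes to spatial infinity as $t\to-\infty$; $A(U)=\{0\}$ is false. The whole point of the proposition is this discrepancy between local-uniform and uniform (shift-invariant) limit sets, and a squeezing argument applied to $\tau(A(U))$ and $\tau(\Omega(U))$ cannot see it.

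The repair you sketch for the ``heart of the matter'' also breaks down in these cases. The intersection numbers $z\bigl(U(\cdot,t)-\phi_c(\cdot-s)\bigr)$ are identically $+\infty$ whenever a tail of $U(\cdot,t)$ sits at the level $0$ (since $\phi_c$ oscillates around $0$), so Lemma \ref{le:zero} gives no information exactly in cases (T2), (T3). And the heuristic that $f'(0)>0$ ``prevents $U(\cdot,t)$ from accumulating on $0$ as $t\to+\infty$'' is not a proof --- indeed the known non-quasiconvergent solutions accumulate on the unstable equilibrium $0$ at arbitrarily large times, and in case (T2) one has $0\in\Omega(U)$ as noted above. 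What must actually be excluded is the alternative $\omega(U)=\{0\}$ (relation \eqref{eq:40}), and the paper does this in Lemma \ref{le:finalc2a2} by a delicate comparison argument: first Theorem \ref{thmPP1} yields the dichotomy \eqref{eq:341-3}, then the monotonicity relations supplied by Lemma \ref{le:BPQ}, together with trapping $U$ above a periodic steady state on a bounded interval, rule out $\omega(U)=\{0\}$. More generally, the paper's proof is necessarily case-based on the behavior of $\Theta_\pm$ (cases (T1)--(T3)), using the one-sided criteria of Lemma \ref{le:alpha}, the control of extremal zeros from Lemma \ref{le:BPQ}, and external convergence theorems (Fife--McLeod for front-like solutions, the threshold results of Matano--Pol\'a\v{c}ik and \cite{P:unbal}) --- none of which your uniform-amplitude picture can replace.
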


Note that, by Lemma \ref{le:2.7}, the second
inclusion means that $\omega(U)$
consists of steady states of \eqref{eq:1} whose trajectories are
contained in $\La_{out}(\Pi_0)$.  Thus the proposition says that
any entire solution $U$ with the indicated properties is a
connection, in $L^\infty_{loc}(\R)$, from 0 to  a set of
steady states with trajectories in the outer loop.
In the process of proof of the proposition, we will make this
conclusion more precise in some cases.

Although our main purpose of investigating entire solutions $U$
satisfying the assumptions of Proposition \ref{prop:entire} is to
prove Theorem \ref{thm:1} concerning the bounded  solution $u$
of \eqref{eq:1}, below we make no further reference to the solution
$u$ and just examine the entire solutions satisfying conditions
(ci)--(civ). Thus, Proposition \ref{prop:entire} can also be 
viewed as a statement concerning a class of entire solutions with some
additional properties, regardless of whether they belong to an
$\om$-limit set or not. Such a classification result for entire
solutions may be of independent interest. 

Let us first of all dispose of the trivial case $U_x\equiv 0$.
In this case, $U$, being
independent of  $x$, is a solution of $\dot\xi=f(\xi)$.
Condition (c0) implies that $U$ is nonconstant and it connects the
unstable equilibrium 0 to an equilibrium $\zeta$ with $(\zeta,0)\in
\La_{out}$. 
Thus in this case, \eqref{eq:6b} is proved.

In the remainder of this section, we assume
that $U$ is an entire solution of \eqref{eq:1} satisfying
conditions (c0)--(civ) and $U_x\not\equiv 0$.
We continue assuming the standing hypotheses (ND), (MF), and
\eqref{eq:17} on $f$, and simplify the  
notation letting  
$$\Lambda_{out}:=\Lambda_{out}(\Pi_0).$$

There are two possibilities in regard to the structure of $\La_{out}$
(cp. Figure \ref{pi0} in Section \ref{concepts}):
\begin{description}
\item[\bf(A1)] $\Lambda_{out}$ is a \emph{homoclinic loop,} that is,
  it is the union of a homoclinic orbit of \eqref{eq:sys} and its limit
  equilibrium, or, in other words,
  \begin{equation}
    \label{eq:21}
    \Lambda_{out}=\{(\ga,0)\}\  {\textstyle \bigcup }\  \tau(\Phi),
  \end{equation}
  where $f(\ga)=0$ and $\Phi$ is a ground state of \eqref{eq:steady} at
  level $\ga$. We choose $\Phi$ so that $\Phi'(0)=0$, that is, the
  only critical point of $\Phi$ is $x=0$.
\item[\bf(A2)] $\Lambda_{out}$ is a \emph{heteroclinic loop,} that is,
  it is the union of two heteroclinic orbits of \eqref{eq:sys} and their
  limit equilibria $(\gamma_\pm,0)$. In other words,
  \begin{equation}
    \label{eq:22}
    \Lambda_{out}=\{(\ga_-,0), (\ga_+,0)\}\ {\textstyle \bigcup}\ \tau(\Phi^+)\ {\textstyle \bigcup}\ \tau(\Phi^-),
  \end{equation}
  with $\gamma_-<\gamma_+$, $f(\ga_\pm)=0$, and $\Phi^\pm$ are
  standing waves of \eqref{eq:steady} connecting $\gamma_-$ and
  $\gamma_+$, one increasing the other one decreasing. We adopt the
  convention that $\Phi^+_x>0$ and $\Phi^-_x<0$.
\end{description}
To have a unified notation, we set
\begin{equation}
  \label{eq:19}
  \begin{aligned}
    \hat p&:=\inf\{a\in\R: (a,0)\in \Pi_0\}=\inf\{a\in\R: (a,0)\in \La_{out}\},\\
    \hat q&:=\sup\{a\in\R: (a,0)\in \Pi_0\}=\sup\{a\in\R: (a,0)\in
    \La_{out}\}.
  \end{aligned}
\end{equation}
Thus, $\{\hat p,\hat q\}=\{\ga, \Phi(0)\}$ if (A1) holds; and
$\hat p=\ga_-$, $\hat q =\ga_+$ if (A2) holds.

Note that if $\bar \ga$ is any of the constants
$\ga$, $\ga_\pm$ in (A1), (A2), then  $f'(\bar \ga)<0$.
Indeed, $f'(\bar\ga)=0$ is not allowed by  (ND), and $f'(\bar \ga)>0$
would imply that $(\bar \ga,0)$ is a center for \eqref{eq:sys}
and thus cannot be the limit equilibrium for any homoclinic or
heteroclinic orbit.

As for the limits \eqref{eq:29b},  since they
are solutions of the ordinary
differential equation $\dot\xi=f(\xi)$, if 
$\Theta(t)$ stands for $\Theta_-(t)$ or $\Theta_+(t)$,
then there are two possibilities:
 either $\Theta(t)=:\Theta$ is independent of $t$ and $f(\Theta)=0$,
 or else it is a
 strictly monotone solution. Due to \eqref{eq:5a},  in
 the former case  $(\Theta,0)$ equals $(0,0)$ or it is an element of
 $\La_{out}$, and in the latter case $\Theta(t)\ne 0$ for all $t$ and 
 $\Theta(-\infty)=0$, $(\Theta(\infty),0)\in\La_{out}$.
 We distinguish the following three cases:
 \begin{itemize}
\item[\bf(T1)] $\Theta_\pm(t)\ne 0$ for all $t$. 
\item[\bf(T2)] $\Theta_\pm\equiv 0$. 
\item[\bf(T3)]  $\Theta_+\equiv 0$ 
  and $\Theta_-(t)\ne 0$ for all
  $t$; or $\Theta_-\equiv 0$ and $\Theta_+(t)\ne 0$ for all
  $t$.
\end{itemize}

We treat these cases in separate subsections, proving \eqref{eq:6b} (and
sometimes more) in each of them.  The next subsection contains some
general lemmas that apply to all three cases. 

\subsection{Some general lemmas}
The following two lemmas show basic relations of
$U(\cdot,t)$ to  $\{(0,0)\}$ and $\Lambda_{out}$. They are special cases  (with
 $\Sigma_{in}=\{(0,0)\}$)  of \cite[Lemmas 4.7 and 
 4.8]{p-Pauthier2}.
Remember that we are assuming that
that $U$ satisfies 
conditions $U_x\not\equiv 0$ and (c0)--(civ)  (in particular $U$ is not
a steady state of \eqref{eq:1}). 
 
\begin{lemma}
  \label{le:basicrel}
  Let $K$ be any one of the sets $\{(0,0)\}$, $\Lambda_{out}$.  Then
  the following statements are 
  valid.
  \begin{itemize}
  \item[(i)] If $(x_n,t_n)$, $n=1,2,\dots,$ is a sequence in $\R^2$
    such that
    \begin{equation}
      \label{eq:58}
      \dist((U(x_n,t_n),U_x(x_n,t_n)),K)\to 0,
    \end{equation}
    then,
    possibly after passing to a subsequence, one has
    $U(\codt+x_n,\cdot+t_n)\to\varphi$ in $C^1_{loc}(\R^2)$, where
    $\varphi$ is a steady state of \eqref{eq:1} with
    $\tau(\varphi)\subset K$.
  \item[(ii)] There exists a sequence $(x_n,t_n)$, $n=1,2,\dots$ as in
    (i) with the additional property that $|t_n|\to\infty.$
    Consequently, there exists a steady state of \eqref{eq:1} with
    $\tau(\varphi)\subset K$ and
    \begin{equation}
      \label{eq:12}
      \varphi\in A(U)\cup \Om(U).
    \end{equation} 
  \end{itemize}
\end{lemma}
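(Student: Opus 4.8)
The plan is to prove both parts by a compactness-plus-rigidity scheme, treating the two choices $K=\{(0,0)\}$ and $K=\Lambda_{out}$ in parallel. For part (i), I would first use that $\tau(U(\cdot,t))\subset\Pi_0$ for all $t$ forces $U$ and $U_x$ to be uniformly bounded (since $\overline{\Pi_0}$ is a bounded subset of a level set of $H$), so by interior parabolic estimates the translates $U(\cdot+x_n,\cdot+t_n)$ are precompact in $C^1_{loc}(\R^2)$. Passing to a subsequence, $U(\cdot+x_n,\cdot+t_n)\to V$ in $C^1_{loc}(\R^2)$, where $V$ is an entire solution of \eqref{eq:1}; each point of every spatial trajectory of $V$ is a limit of points of $\tau(U(\cdot,\cdot))\subset\Pi_0$, so $\tau(V(\cdot,t))\subset\overline{\Pi_0}$ for all $t$. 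The hypothesis \eqref{eq:58} together with the closedness of $K$ gives $(V(0,0),V_x(0,0))\in K$. Finally, from condition \emph{(ci)} one has $z(U(\cdot,s))\le m$ for all $s$, and since the number of sign changes is lower semicontinuous under $C^1_{loc}$ convergence, $z(V(\cdot,t))\le m$ for all $t$; as $0$ is the only unstable zero of $f$ whose level is relevant, this is exactly the hypothesis needed to apply Lemma \ref{le:squeezing} to $V$.

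The heart of part (i) is the rigidity step: showing that the limit $V$, which merely \emph{touches} $K$ at the single space-time point $(0,0)$, must in fact coincide with the steady state supported on $K$. I would compare $V$ with the steady state $\Psi$ whose trajectory passes through the touching point --- $\Psi\equiv 0$ when $K=\{(0,0)\}$, and $\Psi$ a suitable spatial translate of the ground state $\Phi$ (case \textbf{(A1)}) or of a standing wave $\Phi^\pm$ (case \textbf{(A2)}) when $K=\Lambda_{out}$. Since $\Psi$ is a steady state, $w:=V-\Psi$ solves a linear equation \eqref{eq:lin}, and the matching of value and first derivative at the touching point means that $x=0$ is a multiple zero of $w(\cdot,0)$. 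One then argues that $w\equiv 0$: the trajectory $\tau(V(\cdot,t))$ lies in $\overline{\Pi_0}=\overline{\cI}(\Lambda_{out})$ and touches the extremal level set $K$ of $H$, so a nontrivial $w$ would produce a multiple zero forcing $z(w(\cdot,t))$ to drop (Lemma \ref{le:zero}), which, combined with the uniform bound on $z$ inherited above and the robustness Lemma \ref{le:robust}, is incompatible with the confinement of $V$ to $\overline{\Pi_0}$ and with the instability of $0$. Once $w\equiv 0$, we have $\tau(V(\cdot,t))\subset K$ for every $t$; since $K$ lies in a chain, Lemma \ref{le:2.7} shows each $V(\cdot,t)$ is a steady state, hence $V$ is time-independent and equals $\varphi$ with $\tau(\varphi)\subset K$. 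I expect this rigidity step to be the main obstacle, precisely because a single tangential contact with $K$ is weak information; controlling the zero number of $V-\Psi$ and ruling out a genuinely nonstationary limit is the delicate analytic core (this is where the special-case content of \cite[Lemmas 4.7, 4.8]{p-Pauthier2} lives).

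For part (ii) I would first apply Lemma \ref{le:squeezing} directly to $U$, with $K'=\bigcup_{t}\tau(U(\cdot,t))\subset\Pi_0$. If the $H$-values on $K'$ stayed bounded away from $0$, then $K'$ would lie outside $\overline{\cI}(\cO)$ for some small (low-energy) periodic orbit $\cO\subset\Pi_0$, and Lemma \ref{le:squeezing} would force $K'\subset\R^2\setminus\cI(\Lambda_{out})$, contradicting $K'\subset\Pi_0$; symmetrically, if the $H$-values stayed below the value $c_{out}$ of $H$ on $\Lambda_{out}$ they would lie inside some outer periodic orbit, forcing $K'\subset\overline{\cI}(\{(0,0)\})=\{(0,0)\}$ and hence $U_x\equiv 0$. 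Thus the infimum and supremum of $H$ on $K'$ are $0$ and $c_{out}$, i.e.\ the trajectory of $U$ approaches both $(0,0)$ and $\Lambda_{out}$. It remains to route this approach through times with $|t|\to\infty$. Because $\tau(U(\cdot,t))\subset\Pi_0$ is strict while $(0,0),\Lambda_{out}\subset\partial\Pi_0$, the trajectory never attains $K$ at a finite space-time point, so a sequence realizing the extremal $H$-value at bounded times $t_k\to t^*$ must escape in $x$; then condition \emph{(cii)} and the fact that $\Theta_\pm$ solve $\dot\xi=f(\xi)$ force the relevant limit $\Theta_\pm$ to be identically the equilibrium in $K$, which in turn produces the approach to $K$ at every time, and in particular along some $t_n\to+\infty$. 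Either way we obtain a sequence $(x_n,t_n)$ satisfying \eqref{eq:58} with $|t_n|\to\infty$; part (i) applied to it yields a steady state $\varphi$ with $\tau(\varphi)\subset K$, and since $|t_n|\to\infty$ we have $\varphi\in A(U)\cup\Om(U)$ by the definitions \eqref{defOmega}, \eqref{defAlpha}.
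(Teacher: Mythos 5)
The paper itself contains no proof of this lemma: it is imported wholesale as the special case $\Sigma_{in}(\Pi_0)=\{(0,0)\}$ of \cite[Lemmas 4.7 and 4.8]{p-Pauthier2}, so any self-contained argument such as yours is necessarily a different route. Your compactness setup for (i) is correct, and your part (ii) is sound in outline: the two-sided squeezing argument does show the spatial trajectories of $U$ approach both $(0,0)$ and $\Lambda_{out}$, and the dichotomy (bounded $t_k$ forces $|x_k|\to\infty$, which forces $\Theta_+$ or $\Theta_-$ to be identically an equilibrium whose phase-plane point lies in $K$, which yields approach to $K$ along $t_n\to\infty$) is the right mechanism. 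Two details are left unaddressed there: the convergence $(U,U_x)(x_k,t_k)\to(\Theta_\pm(t^*),0)$ requires local-in-$t$ uniformity of the spatial limits (standard parabolic estimates), and in case (A1) the point $(\Theta_\pm(t^*),0)\in\Lambda_{out}$ could a priori be the turning point $(\Phi(0),0)$, which is \emph{not} an equilibrium; one must rule this out by noting $f(\Phi(0))>0$, so an ODE solution through $\Phi(0)$ would exceed $\hat q$ immediately afterwards, contradicting $\Theta_\pm\le\hat q$.

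The genuine gap is the rigidity step of part (i), which you yourself identify as the core. Your stated contradiction---a nontrivial $w=V-\Psi$ has a multiple zero, forcing $z(w(\cdot,t))$ to drop, which ``combined with the uniform bound on $z$ \dots and the robustness Lemma \ref{le:robust}, is incompatible with the confinement of $V$ to $\overline{\Pi_0}$ and with the instability of $0$''---does not establish anything: a drop of the zero number is not by itself contradictory, the confinement of $V$ to the \emph{closed} set $\overline{\Pi_0}$ is perfectly compatible with tangency to $K\subset\overline{\Pi_0}$ (indeed $V\equiv\Psi$ is so confined), and the instability of $0$ plays no role. The correct assembly of your own ingredients is this: if $w\not\equiv 0$, then Lemma \ref{le:robust} applied to $w_n(x,t):=U(x+x_n,t+t_n)-\Psi(x)\to w$ produces, for large $n$, a multiple zero of $w_n$ at some $(\xi_n,\tau_n)\to(0,0)$; at such a point $(U(\xi_n+x_n,\tau_n+t_n),U_x(\xi_n+x_n,\tau_n+t_n))=(\Psi(\xi_n),\Psi'(\xi_n))\in\tau(\Psi)\subset K$, i.e.\ the spatial trajectory $\tau(U(\cdot,\tau_n+t_n))$ meets $K$. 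This contradicts (c0), because $\Pi_0$ consists of nonstationary periodic orbits and is therefore \emph{disjoint} from $K$ (which consists of equilibria and homoclinic/heteroclinic orbits). Hence $w\equiv0$, $V\equiv\Psi$, and (i) follows without any appeal to Lemma \ref{le:squeezing} or zero-number dropping. The decisive point, which your write-up inverts, is that the incompatibility is with the confinement of $U$ (not of $V$) to the \emph{open} set $\Pi_0$; the whole lemma rests on the distinction between $\Pi_0$ and $\overline{\Pi_0}$.
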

In the previous lemma, $\Om(U)$, $A(U)$
are the generalized limit sets of $U$, as defined in \eqref{defOmega},
\eqref{defAlpha}. Statement (i) of the lemma
delineates a way spatial trajectories
of $U$ can get arbitrarily
close to one of the sets $\{(0,0)\}$, $\Lambda_{out}$,
and statement (ii)  shows that the spatial 
trajectories of $U$ cannot stay away from from both
$\{(0,0)\}$ and $\Lambda_{out}$ as $|t|\to\infty$. In the next lemma,
we consider the case when the spatial trajectories stay away from
 one of the points  $(\hat p,0), (\hat q,0)\in \Lambda_{out}$
(see \eqref{eq:19} for the definition of $\hat p$, $\hat q$).

\begin{lemma} The following statements are valid.
  \label{le:ep}
  \begin{itemize}
  \item[(i)] If $U\le \hat q-\vartheta$ for some $\vartheta>0$,
    then $\om(U)=\{\hat p\}$ (so, necessarily,
    $f(\hat p)=0$) and $\al(U)=\{0\}$. Similarly, if
    $U\ge\hat p+\vartheta$ for some $\vartheta>0$, then $\om(U)=\{\hat
    q\}$ (so $f(\hat q)=0$) and 
    $\al(U)=\{0\}$.
  \item[(ii)] If for some $t_0\in \R$ and $\vartheta>0$ one has
    $U(\cdot,t)\le \hat q-\vartheta$ for all $t<t_0$, then
    $\al(U)=\{0\}$.  If for
    some $t_0\in \R$ and $\vartheta>0$ one has
    $U(\cdot,t)\ge \hat p+\vartheta$ for all $t<t_0$, then  $\al(U)=\{0\}$.
  \end{itemize}
\end{lemma}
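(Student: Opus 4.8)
The plan is to follow the proof of \cite[Lemma 4.8]{p-Pauthier2}, reading it with the trivial inner chain $\Sigma_{in}(\Pi_0)=\{(0,0)\}$ in place of a nontrivial one, and to check that each step survives this specialization. The geometric content of the hypotheses is that, since $\hat p<0<\hat q$ and $\Lambda_{out}$ meets the $u$-axis exactly at $(\hat p,0)$ and $(\hat q,0)$, a bound $U\le\hat q-\vartheta$ keeps every spatial trajectory $\tau(U(\cdot,t))$ out of a fixed relatively open arc $A_\vartheta\subset\Lambda_{out}$ surrounding $(\hat q,0)$. All the work then consists in converting this single excluded arc into a complete description of $\al(U)$ and $\om(U)$, using Lemma \ref{le:basicrel} (trajectories can approach only $\{(0,0)\}$ or $\Lambda_{out}$) together with the Squeezing Lemma \ref{le:squeezing} and the zero-number bound in (ci).

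First I would settle the $\al$-statements, since the conclusion $\al(U)=\{0\}$ is common to both parts and needs only the one-sided bound of (ii). Working with the connected set $\tau(\al(U))\subset\ol{\cI}(\Lambda_{out})\cap\{u\le\hat q-\vartheta\}$, I would classify the steady state furnished by Theorem \ref{thm:GS}. A periodic-orbit steady state is impossible: such a profile oscillates across $0$ (its orbit encircles $(0,0)$), so $z(\cdot)=\infty$, contradicting (ci) and the lower semicontinuity of the zero number under $C^1_{loc}$ convergence of functions with simple zeros. A \emph{nonconstant} steady state with trajectory in $\Lambda_{out}$ is also impossible, because its orbit is a full homoclinic or heteroclinic connection and therefore necessarily enters the excluded arc $A_\vartheta$. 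This leaves only the constant equilibria $0$ and, when $f(\hat p)=0$, $\hat p$. To upgrade this to $\al(U)=\{0\}$ exactly, I would show that $\tau(\al(U))$ lies strictly inside some periodic orbit $\cO\subset\Pi_0$ and then invoke Lemma \ref{le:squeezing}(i), which collapses it into $\ol{\cI}(\Sigma_{in})=\{(0,0)\}$. The only obstruction to ``strictly inside'' is a contact of $\tau(\al(U))$ with $\Lambda_{out}$; by Lemma \ref{le:basicrel}(i) any contact point is the trajectory of a steady state in $\Lambda_{out}$, which by the previous step can only be the constant $\hat p$ at the single point $(\hat p,0)$. This residual case I would exclude using that $f(\hat p)<0$ makes $\hat p$ a stable equilibrium of $\dot\xi=f(\xi)$, hence backward-repelling for \eqref{eq:1}: a nonconstant entire solution cannot have the constant $\hat p$ as its sole backward limit.

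The forward statement $\om(U)=\{\hat p\}$ in (i) I would handle by the mirror-image argument. The two-sided bound now confines $\tau(\om(U))$ to $\Lambda_{out}\setminus A_\vartheta$, and the combination of connectedness, simplicity of the spatial trajectories (ciii), and the absence of wrong-sign extrema (civ) should force $\tau(\om(U))$ onto the single point $(\hat p,0)$, giving locally uniform convergence to the constant $\hat p$; since a singleton $\om$-limit set of a bounded solution is a steady state, $f(\hat p)=0$ follows automatically. The second halves of (i) and (ii), with $U\ge\hat p+\vartheta$, then follow verbatim after interchanging $\hat p$ and $\hat q$, most cleanly by applying the argument to $-U$ with $f$ replaced by $s\mapsto-f(-s)$.

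I expect the genuine difficulty to be the collapse of the \emph{connected} limit sets to single points---that is, excluding partial accumulation along $\Lambda_{out}$ and, for $\al(U)$, a residual connection from the center $(0,0)$ to $(\hat p,0)$. This is precisely where the trivial inner chain makes the present situation harder than the nontrivial case treated in \cite{p-Pauthier2}, and where the structural conditions (ci)--(civ), the Squeezing Lemma, and the stability dichotomy of the boundary equilibria ($f'<0$ at $\hat p,\hat q$ versus $f'(0)>0$) must be used in concert.
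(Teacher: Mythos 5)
The paper never actually proves this lemma: it is stated (together with Lemma \ref{le:basicrel}) as a special case, with $\Sigma_{in}(\Pi_0)=\{(0,0)\}$, of \cite[Lemma 4.8]{p-Pauthier2}, where the result was proved for a general inner chain. So your opening plan---specialize the Part II proof to the trivial chain---is exactly the paper's approach; but since you then have to reconstruct that proof, the question is whether your reconstruction is sound, and it has two genuine gaps. The first is the step excluding the constant $\hat p$ from $\al(U)$: you invoke the principle that a stable equilibrium of $\dot\xi=f(\xi)$ (you write $f(\hat p)<0$, presumably meaning $f'(\hat p)<0$) is ``backward-repelling,'' so that a nonconstant entire solution cannot have the constant $\hat p$ as its sole backward limit. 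In the locally uniform topology in which $\al(U)$ is defined, this principle is false: for instance a traveling front $U(x,t)=\phi(x-ct)$ with $c>0$ converges in $L^\infty_{loc}(\R)$, as $t\to-\infty$, to the constant $\phi(+\infty)$, which is a stable equilibrium; so nonconstant entire solutions whose $\al$-limit set is a single stable equilibrium do exist. (That example violates (c0), but this only shows that any valid exclusion of $\hat p$ must use (c0)--(civ) and the bound $U\le\hat q-\vartheta$ in an essential way; your argument does not.) Note also that the comparison-with-periodic-orbits argument by which the present paper excludes $\hat p$ from $\al(U)$ elsewhere (proof of Lemma \ref{le:alpha}) relies on hypotheses on the spatial limits $\Theta_\pm$ (e.g.\ $\Theta_+\not\equiv\hat p$) which are \emph{not} available under the hypotheses of Lemma \ref{le:ep}, and the lemma expressly allows $\Theta_\pm\equiv\hat p$; so the missing step is not a routine adaptation.

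The second gap is the forward statement $\om(U)=\{\hat p\}$ in (i), which you assert rather than prove. You claim the two-sided bound ``confines $\tau(\om(U))$ to $\Lambda_{out}\setminus A_\vartheta$,'' but a priori $\tau(\om(U))$ only lies in $\ol\Pi_0\cap\{u\le\hat q-\vartheta\}$, a set that also contains $(0,0)$ and a large part of $\Pi_0$; the elements of $\om(U)$ are entire solutions, not necessarily steady states, so condition (ci) does not by itself pin their spatial trajectories to $\Lambda_{out}$---getting that far requires Lemma \ref{le:basicrel} plus the Squeezing Lemma, and even then one must still rule out $0\in\om(U)$, i.e.\ that $U$ collapses back onto the center forward in time. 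The phrase ``connectedness, (ciii) and (civ) should force $\tau(\om(U))$ onto the single point $(\hat p,0)$'' is precisely the assertion that needs proof, and nothing in your sketch supplies it. A final remark on your closing assessment: for this particular lemma the trivial inner chain is \emph{not} the source of new difficulty---the paper's point is that the Part II lemmas already cover $\Sigma_{in}=\{(0,0)\}$; the genuinely new difficulties created by the trivial chain are confronted later, in Lemma \ref{le:alpha} and in the case analysis (T1)--(T3).
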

We are making an intentional duplicity in this lemma
by including the conclusion $\al(U)=\{0\}$
in statement (i), although this conclusion is also contained  in
statement (ii) (which has
weaker assumptions). This will allow us to make more
straightforward references to one of these
statements. 

The next lemma gives other sufficient conditions for
$\al(U)=\{0\}$. It may be useful to reiterate at this point that
$\al(U)$ and
$\om(U)$ are considered with respect to the locally uniform
convergence. 

\begin{lemma}
  \label{le:alpha}
If one of the following conditions (a1)--(a4) is
satisfied, then $\al(U)=\{0\}$.
\begin{itemize}
\item[(a1)] $\Theta_+\not\equiv \hat p$ (so $\Theta_+>\hat p$) and 
  there is $m\in\R$ such that
  \begin{equation}
    \label{eq:8}
    \limsup_{x\in [m,\infty),\, t\to -\infty}U(x,t)\ (=
    \limsup_{t\to -\infty}\sup_{x\in [m,\infty),\, }U(x,t))\le 0.
  \end{equation}
  \item[(a2)] $\Theta_+\not\equiv \hat q$ and there is $m\in\R$ such
    that $\ \liminf_{x\in [m,\infty),\, t\to -\infty}U(x,t)\ge 0$.
  \item[(a3)] $\Theta_-\not\equiv \hat p$ and 
  and there is $m\in\R$ such that
   $\  \limsup_{x\in (-\infty,m),\, t\to -\infty}U(x,t)\le 0$.
  \item[(a4)] $\Theta_-\not\equiv \hat q$ and 
  and there is $m\in\R$ such that
   $\     \limsup_{x\in (-\infty,m),\, t\to -\infty}U(x,t)\ge 0$.
\end{itemize}
\end{lemma}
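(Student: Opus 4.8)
The plan is to prove \emph{(a1)} and deduce \emph{(a2)}--\emph{(a4)} from it by the two natural symmetries of the problem. The reflection $x\mapsto -x$ turns $U$ into another entire solution satisfying \emph{(c0)}--\emph{(civ)}, interchanges $\Theta_-$ and $\Theta_+$, and leaves $\hat p,\hat q$ and the set $\alpha(U)$ unchanged, so it converts \emph{(a3)}, \emph{(a4)} into \emph{(a1)}, \emph{(a2)}. The reflection $U\mapsto -U$ with $f(s)\mapsto -f(-s)$ preserves the standing hypotheses and \emph{(c0)}--\emph{(civ)}, replaces $(\hat p,\hat q)$ by $(-\hat q,-\hat p)$ and $\Theta_\pm$ by $-\Theta_\pm$, and satisfies $\alpha(-U)=-\alpha(U)$; it converts \emph{(a2)} into \emph{(a1)}. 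Hence it suffices to treat \emph{(a1)}.

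First I would extract the pointwise content of \emph{(a1)}. If $U(\cdot,t_n)\to\varphi$ in $C^1_{loc}(\R)$ with $t_n\to-\infty$, then for each fixed $x\ge m$ the bound \eqref{eq:8} gives $\varphi(x)\le 0$; thus every $\varphi\in\alpha(U)$ satisfies $\varphi\le 0$ on $[m,\infty)$. Since $\Theta_+(t)=\lim_{x\to\infty}U(x,t)\le\sup_{x\ge m}U(x,t)$, the same bound yields $\limsup_{t\to-\infty}\Theta_+(t)\le 0$. Because $\Sigma_{in}(\Pi_0)=\{(0,0)\}$ is trivial, $0$ is the unique zero of $f$ in $(\hat p,\hat q)$, so the only equilibrium of $\dot\xi=f(\xi)$ in $(\hat p,\hat q)\cap(-\infty,0]$ is $0$; as $\Theta_+$ is a solution of $\dot\xi=f(\xi)$ with values in $[\hat p,\hat q]$ and $\Theta_+>\hat p$ by hypothesis, this forces $\Theta_+(-\infty)=0$ (in particular $\Theta_+\not\equiv\hat q$).

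The goal is equivalent to $\tau(\alpha(U))=\{(0,0)\}$, since $\tau(\varphi)=\{(0,0)\}$ forces $\varphi\equiv 0$. To reach it I would invoke the Squeezing Lemma \ref{le:squeezing}, whose hypothesis \eqref{eq:2.15} holds with $\beta=0$ by the zero-number bound in \emph{(ci)}: it is enough to exhibit one nonstationary periodic orbit $\mathcal O\subset\Pi_0$ with $\tau(\alpha(U))\subset\mathcal I(\mathcal O)$, for then, as $\Sigma_{in}(\Pi_0)=\{(0,0)\}$, the lemma gives $\tau(\alpha(U))\subset\overline{\mathcal I}(\{(0,0)\})=\{(0,0)\}$. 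Since the periodic orbits of $\Pi_0$ are nested and exhaust $\mathcal I(\Lambda_{out})\setminus\{(0,0)\}$, finding such an $\mathcal O$ amounts to showing that $\tau(\alpha(U))$ stays at positive distance from the whole outer loop $\Lambda_{out}$. To control the approach to the top point $(\hat q,0)$ I would track, for a fixed level $\ell=\hat q-\vartheta$, the rightmost crossing $\xi(t)=\sup\{x:U(x,t)=\ell\}$: by the sign information all such crossings lie in $(-\infty,m)$, and Lemma \ref{le:BPQ} together with the shape constraint \emph{(civ)} (no positive local minima) should force $\xi(t)\to-\infty$ as $t\to-\infty$, whence $U(\cdot,t)\le\hat q-\vartheta$ on every compact set for $t$ large negative and therefore $\sup_\R\varphi\le\hat q-\vartheta$ for all $\varphi\in\alpha(U)$. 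Keeping $\tau(\alpha(U))$ away from the remaining (lower) part of $\Lambda_{out}$, where \emph{(a1)} gives no direct information, would then be done using the connectedness of $\tau(\alpha(U))$, the simple-curve property \emph{(ciii)}, and Lemma \ref{le:basicrel} (which locates, in the generalized limit sets, the steady states that any approach to $\Lambda_{out}$ would have to produce).

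The hard part is exactly this trapping step. Hypothesis \emph{(a1)} controls $U$ only on a right half-line, whereas its behavior as $x\to-\infty$ (the limit $\Theta_-$, which may well equal $\hat q$, and fronts that recede to $x=-\infty$ as $t\to-\infty$) is completely free; consequently no uniform-in-$x$ bound on $U$ is available, and Lemma \ref{le:ep} cannot be applied to $U$ directly. What rescues the argument is that $\alpha(U)$ is taken in the \emph{locally uniform} topology: any portion of the spatial trajectory that climbs toward $\Lambda_{out}$ is, by the level-tracking of Lemma \ref{le:BPQ}, carried off to spatial $-\infty$ and so drops out of $\alpha(U)$ on compact sets. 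Turning this heuristic into genuine control over the approach to the \emph{arcs} of $\Lambda_{out}$, and not merely to the equilibrium $(\hat q,0)$, using only one-sided information together with \emph{(civ)} and \emph{(ciii)}, is the central technical obstacle I would expect to spend the most effort on.
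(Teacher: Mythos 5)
Your reduction by symmetry to (a1), and your extraction of the pointwise consequences of \eqref{eq:8} (every $\varphi\in\al(U)$ satisfies $\varphi\le 0$ on $[m,\infty)$, and $\Theta_+(t)\to 0$ as $t\to-\infty$), are correct, and your end-game (exhibiting a periodic orbit $\cO\subset\Pi_0$ enclosing $\tau(\al(U))$ and invoking Lemma \ref{le:squeezing}) would indeed finish the proof. But the core of the argument --- keeping $\al(U)$ away from the steady states on $\La_{out}$ --- is exactly what you leave unproven, and the mechanism you propose for it cannot work as stated. Your level-tracking claim, that Lemma \ref{le:BPQ} and (civ) ``should force'' $\xi(t)\to-\infty$ as $t\to-\infty$ for the rightmost crossing of the level $\hat q-\vartheta$, is unjustified: Lemma \ref{le:BPQ} only yields sign/monotonicity information (in the paper it is used to get upper bounds on such crossing points, never divergence to $-\infty$), and your claim is essentially equivalent to the conclusion being proved. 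Concretely, the pointwise facts you derived are perfectly consistent with $U(\cdot,t_n)$ converging locally uniformly, along some $t_n\to-\infty$, to a shift of the decreasing front $\Phi^-$ (case (A2)), to a shift of the ground state $\Phi$ at level $\hat p$ (case (A1)), or to the constant $\hat p$, provided the sign change sits in $(-\infty,m)$; none of these scenarios is touched by $\varphi\le0$ on $[m,\infty)$ or by (ciii), (civ). What rules them out is the hypothesis $\Theta_+\not\equiv\hat p$, which your trapping step never uses in a substantive way (you invoke it only to compute $\Theta_+(-\infty)=0$, which is not where it is needed).

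The paper's proof supplies precisely the two arguments missing from your outline. First, if a shift of $\Phi$ (or of $\Phi^-$, or the constant $\hat p$) belonged to $\al(U)$, one compares $U$ on boxes $(m,m+\rho)\times(t_n,t_0)$ with periodic steady states $\psi$ whose trajectories lie in $\Pi_0$, and then lets these $\psi$ degenerate to the ground state translated to $m$; this squeezes $U(\cdot,t_0)$ below $\Phi(\cdot-m)$ on $(m,\infty)$, forcing $\Theta_+(t_0)=\hat p$ and contradicting $\Theta_+>\hat p$. This step excludes every nonzero steady state from $\al(U)$. Second, to upgrade this to $\al(U)=\{0\}$, one takes $\varphi\in\al(U)$ with $\varphi\not\equiv0$ and the entire solution $\tilde U$ through $\varphi$ lying in $\al(U)$: the strong comparison principle gives $\tilde U<0$ on $(m,\infty)$ for $t>0$, a comparison with small periodic orbits around the center $(0,0)$ pins $\tilde U$ below a negative periodic steady state on a fixed interval for all large $t$, and Theorem \ref{thm:GS} then places a nonzero steady state into $\om(\tilde U)\subset\al(U)$, contradicting the first step. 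Neither the squeezing-by-periodic-orbits comparison nor the use of Theorem \ref{thm:GS} appears in your proposal; without them (or a genuine substitute), your plan has a gap at its central step --- one you yourself flag as ``the central technical obstacle.''
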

\begin{proof}
  We prove the conclusion assuming (a1) holds, all
  the other cases are analogous.
Note that \eqref{eq:8} in particular implies that 
  \begin{equation}
    \label{eq:7}
   \varphi(x)\le 0\quad (x\in (m,\infty),\ \varphi\in \al(U)). 
 \end{equation}
  
 To start with, we
 claim that no nonzero steady state of \eqref{eq:1} can belong to
$\al(U)$. To show this, it is sufficient---because of 
(c0)---to exclude from $\al(U)$ all steady states  $\varphi$ with
$\tau(\varphi)\subset \La_{out}$ (note that no nonconstant
periodic steady state can be contained in $\al(U)$ by (ci)).
Relation \eqref{eq:7} excludes the shifts of
the increasing standing wave $\Phi_+$ (if
$\La_{out}$ a heteroclinic loop as in (A2)), 
the constant $\hat q$ (if $f(\hat q)=0$), and the shift of the
ground state $\Phi$ in the case (A1) with $\ga=\Phi(\pm\infty)=\hat q$. 
It remains to exclude the constant
$\hat p$,  the shifts of the ground state $\Phi$
in the case (A1) with $\ga=\hat p$, and the shifts of the decreasing
standing wave $\Phi_-$ in the case (A2). The proofs for all these
use very similar comparisons arguments involving  periodic steady
states, so we give the details for just one of them, say for the
ground state $\Phi$ when  $\ga=\hat p$ (and $\Phi(0)=\hat q$).

Assume for a contradiction  that a shift $\tilde \Phi$ of
$\Phi$ is contained in $\al(U)$. 
Hence, there is sequence $t_n\to-\infty$
such that $U(\cdot,t_n)\to \tilde \Phi$ in $L^\infty_{loc}(\R)$. 
Let $x_0$ stand for the larger of the two zeros of $\tilde \Phi$. Clearly,
\eqref{eq:7} implies that $x_0\le m$ and $\tilde \Phi<0$ on $(m,\infty)$.
Pick any $\ep\in (0,\hat q)$. Then, by \eqref{eq:8}, there is
$t_0\in\R$ such that
\begin{equation}
  \label{eq:9}
  U(x,t)<\ep\quad (x\ge m,\ t\le t_0). 
\end{equation}
For any $\nu\in[\ep,\hat q),$ let
$\psi$ be any periodic solution of \eqref{eq:steady} with
$\tau(\psi)\subset\Pi_0$ and
$\psi(m)=\psi(m+\rho)=\nu$, $\rho>0$ being the minimal period of $\psi$. 
Then, $\psi(m)=\psi(m+\rho)>0 \ge \tilde \Phi(m)>\tilde \Phi(m+\rho)$;
and, in fact,
  $\tilde \Phi<\psi$ on
  $(m, m+\rho)$ (otherwise, a right shift of the graph of $\psi$ would be
  touching the graph of $\tilde \Phi$, which is impossible for two
  distinct solutions of \eqref{eq:steady}).  Consequently, if $n$ is
  large enough, we have $t_{n}<t_0-1$ and
  \begin{equation*}
    U(x,t_{n})<\psi(x)\quad(x\in(m,m+\rho)).
  \end{equation*}
  Since, by \eqref{eq:9}, we also have 
  \begin{equation*}
    U(m,t)<\psi(m)\ \textrm{ and } \ U(m+\rho,t)<\psi(m+\rho)\quad
 (t<t_0), 
  \end{equation*}
  applying the comparison principle on the domain
  $(m,m+\rho)\times(t_{n},t_0)$, we obtain
  \begin{equation*}
    U(x,t_0)<\psi(x)\quad (x\in(m,m+\rho)).
  \end{equation*}
  This is true for all periodic solutions $\psi$ with the indicated
  properties. We can choose a sequence of such periodic solutions
  converging locally uniformly to another shift $\bar \Phi:=\Phi(\cdot-m)$ of the
  ground state $\Phi$ (by continuity with respect to the initial data,
  the periods 
  $\rho$ of these periodic solutions  
  go to infinity). This implies that  $U(x,t_0)\le\bar
  \Phi(x)$ for all  $x>m$. In particular,
  $\Theta_+(t_0)=U(\infty,t_0)=\hat p$, in contradiction to the
  assumption on $\Theta_+(t_0)$. Our claim is proved.

  We now prove that $\al(U)=\{0\}$. Take any $\varphi\in \al(U)$.
  Let $\tilde U$ be the entire solution of \eqref{eq:1} with $\tilde
  U(\cdot,0)=\varphi$ and $\tilde U(\cdot,t)\in \al(U)$ for all
  $t\in\R$. By \eqref{eq:7}, $\tilde U(\cdot,t)\le 0$ in $(m,\infty)$
  for all $t\in\R$. 
  We go by contradiction: if $\varphi\not\equiv 0$, the strong comparison principle implies
  that 
  $\tilde U(\cdot,t)<0$ on $(m,\infty)$ for all $t>0$. Now,
  assumption \eqref{eq:17} and the Hamiltonian structure of system
  \eqref{eq:sys} imply that 
  $(0,0)$ is a center for \eqref{eq:sys}:  a
  neighborhood of $(0,0)$ is foliated by periodic orbits. We can thus
  choose a sequence $\psi_n$ of nonconstant periodic
  solutions of \eqref{eq:steady}, with their minimal periods
  bounded from above by a constant $\rho_0>0$,  such that $\max |\psi_n|\to 0$.  
  Pick any $x_0>m$ and denote
  \begin{equation*}
    s:=\max \{\tilde U(x,1):x\in [x_0,x_0+\rho_0]\}<0.
  \end{equation*}
  If $n$ is sufficiently large, then a suitable shift $\psi$ of
  $\psi_n$ satisfies 
  \begin{equation*}
   \psi(x_0)=\psi(x_1)=0\text{ and } s<\psi(x)<0 \quad (x\in (x_0,x_1)),
  \end{equation*}
  for some $x_1\in (x_0,x_0+\rho_0)$.
  Then $\tilde U(\cdot,1)\le \psi$ in $(x_0,x_1)$ and 
  $\tilde U(x_j,t)\le 0=\psi(x_j)$, $j=0,1$, for all $t\ge 1$. 
  Therefore, the comparison principle gives $\tilde U(\cdot,t)\le\psi$ on
  $(x_0,x_1)$ for all $t>1$. This and Theorem \ref{thm:GS} imply that
  $\om(\Tilde U)$, which is a subset of $\al(U)$ by compactness of
  $\al(U)$, contains a nonzero steady state, in contradiction to the above
  claim. We have thus shown that $\varphi\not\equiv 0$ is impossible,
  therefore   $\al(U)=\{0\}$.
\end{proof}

\subsection{Case (T1): $\Theta_\pm(t)\ne 0$ for all $t$
}\label{subsc1}
In this subsection, we assume that the entire solution $U$, fixed as
above, satisfies the relations $\Theta_\pm(t)\ne 0$ for all $t$.

We first show that $\tau(\Om(U))\subset \Lambda_{out}$
(note that this is stronger
than the needed conclusion  $\tau(\om(U))\subset \Lambda_{out}$). In
some cases, we even establish the existence of a limit
\begin{equation}
  \label{eq:3}
 \text{$\phi=\lim_{t\to\infty}U(\cdot,t)$ in $L^\infty(\R)$.}
\end{equation}
\begin{Lemma}
  \label{le:conv}
  The following statements are valid.
  \begin{itemize}
  \item[(i)] If 
  $\Theta_-$, $\Theta_+$  have opposite signs, then necessarily
    $\Lambda_{out}$ is a heteroclinic loop as in \emph{(A2)}, and the
    limit 
    $\phi$ in \eqref{eq:3} exists and is equal to a standing wave -- a
    shift of $\Phi^+$ or  $\Phi^-$.
  \item[(ii)] If the signs of
    $\Theta_-$, $\Theta_+$ are equal and $\Lambda_{out}$ is a
    heteroclinic loop as in \emph{(A2)}, then the
    limit 
    $\phi$ in \eqref{eq:3} exists and is equal to one of
    the constants $\ga_-$, $\ga_+$.
  \item[(iii)]  If the signs of
    $\Theta_-$, $\Theta_+$ are equal and 
  $\Lambda_{out}$ is a 
    homoclinic loop as in \emph{(A1)}, then the following statements
    hold:
    \begin{itemize}
    \item[(a)] If the functions
  $\Theta_-$, $\Theta_+$ are constant (so they are both identical 
  to the constant $\ga$), then the
    limit 
    $\phi$ in \eqref{eq:3} exists and is equal to the
    constant $\ga$ or a shift of the ground state $\Phi$.
  \item[(b)] If one (or both) of the functions functions
  $\Theta_-$, $\Theta_+$ is nonconstant, then $\tau(\Om(U))\subset \Lambda_{out}$.
    \end{itemize}
  \end{itemize}
  In particular, in all cases, we have $\tau(\om(U))\subset
  \Lambda_{out}$.
\end{Lemma}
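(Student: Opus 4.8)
The plan is to reduce all four claims to the single statement $\tau(\Om(U))\subset\Lao$ and then to upgrade it to the stated convergence case by case. I would start from the scalar dynamics of the boundary values. By (cii) each of $\Theta_\pm$ solves $\dot\xi=f(\xi)$, is bounded, never vanishes (this is Case (T1)), and satisfies $(\Theta_\pm(t),0)\in\ol{\mathcal{I}}(\Lao)$, so $\Theta_\pm(t)\in[\hp,\hq]$. Since $\Sii(\Pi_0)=\{(0,0)\}$, Lemma \ref{le:p0} shows that the only zero of $f$ in $(\hp,\hq)$ is $0$; as $f'(0)>0$ while $f'(\hp),f'(\hq)<0$, we get $f>0$ on $(0,\hq)$ and $f<0$ on $(\hp,0)$. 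Hence each $\Theta_\pm$ is either a nonzero constant equal to $\hp$ or $\hq$, or a strictly monotone orbit with $\Theta_\pm(-\infty)=0$ and $\Theta_\pm(+\infty)\in\{\hp,\hq\}$. In every case $\Theta_\pm$ keeps a fixed sign and $\lim_{t\to\infty}\Theta_\pm(t)=\hq$ when $\Theta_\pm>0$ and $=\hp$ when $\Theta_\pm<0$. This identifies the targets of the two ends and justifies the case split: if $\Lao$ is a homoclinic loop (A1), both ends target the single equilibrium $\ga$, forcing equal signs, which is exactly why opposite signs in (i) already force the heteroclinic alternative (A2).

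The core step is $\tau(\Om(U))\subset\Lao$. Write $K:=\tau(\Om(U))$, which is compact, connected, and contained in $\ol{\mathcal{I}}(\Lao)$. I would apply the Squeezing Lemma (Lemma \ref{le:squeezing}), whose hypothesis \eqref{eq:2.15} holds with $\beta=0$ by (ci), since $0$ is the only unstable equilibrium in the range of $U$. If $(0,0)\notin K$, then $K$ avoids a whole neighborhood of $(0,0)$; choosing a periodic orbit $\cO\subset\Pi_0$ with $\ol{\mathcal{I}}(\cO)$ inside that neighborhood gives $K\subset\R^2\setminus\ol{\mathcal{I}}(\cO)$, so Lemma \ref{le:squeezing}(ii) yields $K\subset\R^2\setminus\mathcal{I}(\Lao)$, and with $K\subset\ol{\mathcal{I}}(\Lao)$ this forces $K\subset\Lao$. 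So everything reduces to excluding $(0,0)$ from $K$. Now $(0,0)\in\tau(\vp)$ for some $\vp\in\Om(U)$ means either $\vp\equiv0$ or $\vp$ has a multiple zero. The latter, for $\vp\not\equiv0$, is ruled out by the standard zero-number argument: viewing $U$ as a solution of a linear equation \eqref{eq:lin} (legitimate because $f(0)=0$), Lemma \ref{le:robust} applied to the shifts $U(\cdot+x_n,\cdot+t_n)\to V$ produces multiple zeros of $U(\cdot,t)$ at times tending to $\infty$, each forcing a drop of the finite, nonincreasing integer $z(U(\cdot,t))\le m$ by Lemma \ref{le:zero}; infinitely many drops are impossible.

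The genuinely delicate point, and what I expect to be the main obstacle, is excluding the constant $0$ from $\Om(U)$, i.e. ruling out $\vp\equiv0$. This cannot come from zero-number bookkeeping, since $0$ would be approached in $C^1$ on windows $[x_n-R,x_n+R]$ pushed to spatial infinity, with vanishing gradient. Here I would combine the shape information (civ) with the boundary pinning above: for large $t$ each end of $U(\cdot,t)$ is close to a nonzero value $\hp$ or $\hq$, while (civ) forbids positive local minima and negative local maxima, so $U(\cdot,t)$ cannot sustain a long near-zero plateau flanked by nonzero tails without violating the monotonicity structure of its critical points. I would make this quantitative using that $0$ is unstable: a translate converging to $0$ would have to move away, by comparison with the small periodic orbits foliating a neighborhood of $(0,0)$ exactly as in the proof of Lemma \ref{le:alpha}, contradicting $0\in\Om(U)$. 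Carrying this exclusion out rigorously is the crux of the argument.

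Granting $\tau(\Om(U))\subset\Lao$, and hence $\tau(\om(U))\subset\Lao$ since $\om(U)\subset\Om(U)$, the final ``in particular'' assertion and case (iii)(b) are already done. For the remaining statements I would pin the translation. By Lemma \ref{le:2.7} every $\vp\in\om(U)$ is then a steady state with $\tau(\vp)\subset\Lao$, whose limits at $\pm\infty$ equal the pinned end-values from the first paragraph: $\ga_-$ and $\ga_+$ in (i), a common $\ga_\pm$ in (ii), and $\ga$ in (iii)(a). Matching these against the steady states living on $\Lao$ leaves only a shift of $\Phi^+$ or $\Phi^-$ in (i), the constant $\ga_\pm$ in (ii), and either $\ga$ or a shift of the ground state $\Phi$ in (iii)(a). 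When only a constant is admissible (case (ii), and the constant alternative in (iii)(a)), $\om(U)$ is already a single point. When a nonconstant profile is possible (the standing wave in (i), the ground-state shift in (iii)(a)), I would collapse the one-parameter family of shifts by tracking a level set $\xi(t):=\sup\{x:U(x,t)=c\}$ for a constant $c$ strictly between the two end-values and invoking Lemma \ref{le:BPQ} together with the eventual constancy of $z(U(\cdot,t)-c)$; the resulting monotone, bounded motion of $\xi(t)$ forces $\xi(t)$ to converge, singling out one translate. In each case the pinned ends then upgrade the $L^\infty_{loc}$ convergence to convergence in $L^\infty(\R)$, giving the limit $\phi$ in \eqref{eq:3}. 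In (iii)(b), where one end genuinely drifts as the nonconstant $\Theta_\pm$ sweeps from $0$ to $\ga$, no single translate can be isolated, and one stops at $\tau(\Om(U))\subset\Lao$.
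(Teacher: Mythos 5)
Your proposal contains two genuine gaps, and the first is fatal to its overall architecture. The central step—excluding $0\in\Om(U)$—is the point you yourself flag as ``the crux'' and leave unproven, and the fix you sketch does not work. Comparison with small periodic orbits ``exactly as in the proof of Lemma \ref{le:alpha}'' is an argument about the $\al$-limit set: there, the entire solution through a limit profile stays on one side of $0$ on a half-line \emph{for all times}, which is what supplies the lateral boundary control needed by the comparison principle. For the forward set $\Om(U)$ no such control exists, and instability of an equilibrium by itself never excludes it from a forward limit set (unstable equilibria sit inside $\om$-limit sets whenever heteroclinic connections occur). The paper's actual argument is of a different nature and uses the case hypothesis (T1): pick a nonconstant periodic solution $\psi$ with $\tau(\psi)\subset\Pi_0$, so that $\hp<\min\psi<0<\max\psi<\hq$. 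If $U(\cdot+x_n,t_n)\to 0$ in $C^1_{loc}$ with $t_n\to\infty$, then $z(U(\cdot,t_n)-\psi)$ becomes arbitrarily large; but since both tails $\Theta_\pm(t)$ converge to $\ga$ (resp.\ $\ga_\pm$) and hence leave $[\min\psi,\max\psi]$ for large $t$, Lemma \ref{le:zero} makes $z(U(\cdot,t)-\psi)$ finite and nonincreasing for large $t$---a contradiction. Without this intersection-counting idea (or an equivalent), your core inclusion $\tau(\Om(U))\subset\Lao$ is not established; note the paper only needs it in case (iii)(b).

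The second gap is the reduction of (i), (ii), (iii)(a) to that inclusion. Even granting $\tau(\om(U))\subset\Lao$, you cannot ``match'' the limits at $\pm\infty$ of a profile $\vp\in\om(U)$ with the limits of $\Theta_\pm$: spatial limits do not pass to $L^\infty_{loc}$ limits. For instance, in case (ii) with both ends tending to $\ga_+$, the profile $U(\cdot,t_n)$ may a priori converge locally uniformly to a shift of $\Phi^-$ while $U(\pm\infty,t_n)\to\ga_+$ at both ends (the solution dips toward $\ga_-$ inside a moving window and returns to $\ga_+$ outside it; (civ) permits negative local minima, so this shape is not excluded). Likewise, ``collapsing the shift family'' by tracking $\xi(t)=\sup\{x:U(x,t)=c\}$ via Lemma \ref{le:BPQ} and eventual constancy of $z(U(\cdot,t)-c)$ is not a proof: selecting a single translate is precisely the hard content of convergence theorems, and neither cited lemma gives monotonicity or convergence of $\xi(t)$. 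The paper instead imports the dynamics: cases (i) and (ii) follow from the Fife--McLeod convergence theorem \cite{FMcL} (for (ii) combined with a comparison function as in \cite{P:examples}), and case (iii)(a) from the threshold theorem \cite[Theorem 2.5]{Matano_Polacik_CPDE16}; these inputs cannot be replaced by steady-state classification plus end-value bookkeeping. Finally, minor slips in your setup: in case (A1) only one of $\hp,\hq$ is a zero of $f$, so ``$\Theta_\pm$ constant equal to $\hp$ or $\hq$'' and ``$f'(\hp),f'(\hq)<0$'' are incorrect, and when $\ga=\hp$ positive signs of $\Theta_\pm$ are in fact impossible rather than leading to the limit $\hq$.
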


\begin{remark}
  \label{rm:t1}{\rm
   The convergence conclusion as in
   statement (iii)(a) is likely valid in (iii)(b) as well. However,
   a result on threshold solutions from \cite{Matano_Polacik_CPDE16}
    that we are
    using in the proof of (iii)(a) does not seem to be available in
    general when $\Theta_-$, $\Theta_+$ are nonconstant.
    }
\end{remark}
\begin{proof}[Proof of Lemma \ref{le:conv}]
  Recall that the functions $\Theta_-$, $\Theta_+$ are solutions of
  the equation  $\dot\xi=f(\xi)$.  If they
  have opposite signs, as assumed in statement
  (i), then (whether they are
  constants or not) their limits
  \begin{equation}
    \label{eq:13}
    \theta_+=\lim_{t\to\infty}\Theta_+(t),\quad \theta_-=\lim_{t\to\infty}\Theta_-(t)
  \end{equation}
  are nonzero constants, both stable equilibria of
  $\dot\xi=f(\xi)$. Clearly, they have opposite signs, hence, in view of (c0), 
  necessarily 
  $\{\theta_-, \theta_+\}= \{\gamma_-, \gamma_+\}$, with $\ga_\pm$ as
  in (A2). Thus $U$ is a front-like solution in the sense that $U$ takes
  values between  $\ga_-$, $\ga_+$, and one of
  its spatial limits $U(\pm\infty,t)$ is in the interval $(\ga_-,0)$
  while the other one is in $(0,\ga_+)$.
  Since $f'(\ga_\pm)<0$, the convergence result in (i) is contained in
  \cite[Theorem 3.1]{FMcL}.

  Under the assumptions of (ii), the limits $\theta_-, \theta_+$ of 
  $\Theta_-(t)$, $\Theta_+(t)$ as $t\to\infty$ are equal to the same
  constant, either $\ga_-$ or $\ga_+$. In this case, 
  the convergence result stated in (ii) is also known and
  can easily be derived from
  \cite[Theorem 
  3.1]{FMcL} using a comparison function (see
   \cite[Proof of Lemma 3.4]{P:examples}, for example).

  Let now
  $\Lambda_{out}$ be a
  homoclinic loop as in (A1). For definiteness, we assume
  $\Phi(0)>\ga$ (the ground state $\Phi$ is above $\ga$). The case  
  $\Phi(0)<\ga$ is analogous. 
  Thus, in the notation introduced in \eqref{eq:19}, $\hat p=\ga$ and
  $\hat q=\Phi(0)$.

  Assume first that $\Theta_-\equiv \Theta_+\equiv \ga$. 
  Since $(\ga,\hat q]$ is the range of the ground state $\Phi$,
  $F<F(\ga)$ in $(\ga,\hat q]$ (here, $F(u)=\int_0^uf(s)\,ds$ is as
  in \eqref{energy}).   This is 
  the setup of \cite[Theorem 2.5]{Matano_Polacik_CPDE16} whose
  conclusion, translated to the present notation, is the same as the
  conclusion in (iii)(a).

It remains to prove statement (iii)(b). To that end, 
 we first claim 
  that  $(0,0)\not\in \tau(\Om(U))$. Observe that this claim implies
  the  desired inclusion $\tau(\Om(U))\subset
  \Lambda_{out}$. Indeed, 
  since 
  $\tau(\Om(U))$ is a compact subset of $\R^2$, our claim implies that
  there is a neighborhood of
  $(0,0)$ disjoint from $\tau(\Om(U))$. Consequently, since the
  equilibrium $(0,0)$ is
  a center for \eqref{eq:sys}, there is a nonconstant periodic orbit $\cO$ of
  \eqref{eq:sys} in this neighborhood  (with $(0,0)\in \cI(\cO)$). We
  then have
  $\tau(\Om(U))\subset\R^2\setminus{\overline{\mathcal{I}}(\mathcal{O})}$,
  and a direct application of Lemma \ref{le:squeezing} (taking
  \eqref{eq:5a} into account) gives $\tau(\Om(U))\subset
  \Lambda_{out}$.

  We prove our claim by 
  contradiction. Assume 
  $(0,0)\in \tau(\Om(U))$. This means that $0\in \Om(U)$, and hence
  there is a
  sequence $(x_n,t_n)$ with $t_n\to\infty$ such that
  $U(\cdot+x_n,t_n)\to 0$ in $L^\infty_{loc}(\R)$. Pick a nonconstant
  periodic solution $\psi$ of \eqref{eq:steady} with
  $\tau(\psi)\subset \Pi_0$, so that $(0,0)\in \cI(\tau(\psi))$ and
  $\hat p<\min \psi<\max\psi<\hat q$. 
  Then $\psi$ has infinitely many zeros and therefore
  $\psi-U(\cdot,t_n)$
  has infinitely many zeros for all large enough $n$. On the other
  hand,    each of the values
  $\Theta_\pm(t)=U(\pm\infty,t)$ is either equal to $\ga$ or, if nonconstant,
  approaches
  $\ga$  as $t\to\infty$. So both $\Theta_\pm(t)$ are 
  outside the interval $[\min\psi,\max\psi]$ for large $t$.
  Therefore, by Lemma
  \ref{le:zero},  $z (U(\cdot,t)-\psi)$ is finite and bounded for
  large $t$. We have obtained a contradiction, which proves our claim.   
\end{proof}

The following lemma completes the proof of Proposition
\ref{prop:entire} in the case (T1).
\begin{lemma}
  \label{le:t1alpha}
  $\al(U)=\{0\}$.
\end{lemma}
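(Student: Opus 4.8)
The plan is to reduce the statement to Lemma \ref{le:alpha} whenever possible and to treat a small residual configuration by hand. Recall that in Case (T1) each of $\Theta_-,\Theta_+$ is either a nonzero constant equal to $\hat p$ or $\hat q$, or a strictly monotone solution of $\dot\xi=f(\xi)$ with $\Theta_\pm(-\infty)=0$; in the latter, nonconstant, case $\Theta_\pm(t)\to 0$ as $t\to-\infty$, $\Theta_\pm$ keeps a fixed sign, and in particular $\Theta_\pm\not\equiv\hat p$ and $\Theta_\pm\not\equiv\hat q$. Thus the leading hypotheses of conditions (a1)--(a4) are automatically available as soon as the relevant limit function is nonconstant. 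The first step is therefore the dichotomy: either at least one of $\Theta_-,\Theta_+$ is nonconstant, or both are constant.

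Suppose first that, say, $\Theta_+$ is nonconstant; the case of nonconstant $\Theta_-$ is symmetric, using (a3)--(a4) in place of (a1)--(a2). I would verify the appropriate one of (a1), (a2) according to the sign of $\Theta_+$: if $\Theta_+>0$ I aim for the liminf bound in (a2), and if $\Theta_+<0$ for the limsup bound in (a1). The mechanism is to control the sign of $U$ on a fixed right half-line $[m,\infty)$ as $t\to-\infty$. Since $U(\cdot,t)$ has finitely many, simple critical points with uniformly bounded count (ci) and, by (civ), has no positive local minimum and no negative local maximum, each snapshot $U(\cdot,t)$ is, near $+\infty$, monotone with limit $\Theta_+(t)$ of fixed sign, hence one-signed on a maximal right tail $(\zeta(t),\infty)$, where $\zeta(t)$ is the largest zero of $U(\cdot,t)$. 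Applying Lemma \ref{le:BPQ} with $\theta\equiv 0$ to $\xi(t)=\zeta(t)$, the monotone behavior of $\zeta$ forces $U_x$ to be of constant sign on expanding right tails as $t\to-\infty$; together with (civ) this shows that the opposite-sign part of $U$ cannot persist on $[m,\infty)$, which yields the required half-line bound. Lemma \ref{le:alpha} then gives $\al(U)=\{0\}$.

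It remains to treat the case in which both $\Theta_-$ and $\Theta_+$ are constant, each equal to $\hat p$ or $\hat q$; here none of (a1)--(a4) applies, since a constant limit equals one of $\hat p,\hat q$. If the two constants are equal, say $\Theta_\pm\equiv\hat q$, then $U\le\hat q$ with tails at $\hat q$, and by (civ) every interior minimum has value $\le 0<\hat q$; I would show that these minima stay bounded away from $\hat p$ as $t\to-\infty$, so that $U(\cdot,t)\ge\hat p+\vartheta$ for all $t$ below some $t_0$, and then Lemma \ref{le:ep}(ii) gives $\al(U)=\{0\}$ (the case $\Theta_\pm\equiv\hat p$ is symmetric). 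If instead the two constants are opposite, say $\Theta_-\equiv\hat q$ and $\Theta_+\equiv\hat p$, then $U$ is front-like (forcing (A2)); here I would again track the zeros of $U(\cdot,t)$ via Lemma \ref{le:BPQ} and show that the transition layers escape to $\pm\infty$ as $t\to-\infty$, so that $U\to 0$ locally uniformly, i.e. $\al(U)=\{0\}$. Throughout, nonconstant periodic steady states are excluded from the limit sets by the zero-number bound (ci), and Lemma \ref{le:squeezing} (applicable because $0$ is the only relevant unstable equilibrium, so \eqref{eq:2.15} holds with $\beta=0$) converts ``trajectory trapped inside a small periodic orbit'' into $\tau(\al(U))=\{(0,0)\}$.

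I expect the both-constant case to be the main obstacle, and in particular the scenario in which, backward in time, the spatial trajectory of $U$ approaches the whole outer loop $\Lambda_{out}$, reaching both $(\hat p,0)$ and $(\hat q,0)$. The difficulty is that the spatial limits $\Theta_\pm$ then sit exactly on $\Lambda_{out}\subset\partial\Pi_0$, so the generalized limit set $A(U)$ necessarily contains the boundary equilibria $\hat p,\hat q$ and $\tau(A(U))$ meets $\Lambda_{out}$; consequently Lemma \ref{le:squeezing} cannot be applied to $A(U)$, and one must argue with $\al(U)$ (no spatial shifts) and control the interior profile directly. Ruling this scenario out---equivalently, showing that $\min U(\cdot,t)$ cannot tend to $\hat p$ while $\max U(\cdot,t)$ tends to $\hat q$ as $t\to-\infty$---is where the combination of (civ), the bounded zero number (ci), the simple-curve property (ciii), and comparison with small periodic solutions (as in the proof of Lemma \ref{le:alpha}) will have to be brought to bear, using Theorem \ref{thm:GS} to extract a forbidden nonzero steady state in $\al(U)$ from any such boundary approach.
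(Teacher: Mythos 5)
Your high-level skeleton (dichotomy between constant and nonconstant $\Theta_\pm$, reduction to Lemma \ref{le:alpha} via a half-line sign bound, with Lemmas \ref{le:ep} and \ref{le:BPQ} as tools) is the same as the paper's, but the mechanism you propose for the nonconstant case has a genuine gap. You tie the choice of (a1) versus (a2) to the sign of $\Theta_+$ and apply Lemma \ref{le:BPQ} only with the fixed level $\theta\equiv 0$, claiming that ``the opposite-sign part of $U$ cannot persist on $[m,\infty)$.'' This fails in one of the two branches that Lemma \ref{le:BPQ} forces on you. Suppose $\Theta_+(t)\in(\hat p,0)$ and let $\eta(t)$ be the largest zero of $U(\cdot,t)$; by Lemma \ref{le:prep} there is at most one critical point of $U(\cdot,t)$ in $(\eta(t),\infty)$, which yields the paper's dichotomy (p1)/(p2). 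In (p1), $U$ is non-monotone on $(\eta(t),\infty)$, Lemma \ref{le:BPQ} with $\theta\equiv 0$ bounds $\eta(t)$ from above, and (a1) applies --- this part matches your sketch. But in (p2) one has $U_x<0$ on $(\eta(t),\infty)$, so $U$ \emph{is} monotone there and Lemma \ref{le:BPQ} with $\theta\equiv 0$ gives no bound on $\eta(t)$: the largest zero may drift to $+\infty$ as $t\to-\infty$, and then the positive hump of $U(\cdot,t)$ sitting just left of $\eta(t)$ persists on every fixed half-line $[m,\infty)$, so the limsup bound (a1) is unobtainable; neither (ci) nor (civ) prevents this. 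The paper's resolution is to change both the target and the level: either $U>\Theta_+$ everywhere, in which case Lemma \ref{le:ep}(ii) applies, or one tracks the largest zero $\xi(t)$ of $U(\cdot,t)-\Theta_+(t)$ against the \emph{moving} level $\theta(t)=\Theta_+(t)$; since $U$ rises from $\Theta_+(t)$ at $\xi(t)$ to $0$ at $\eta(t)$ and decays back to $\Theta_+(t)$, it is non-monotone on $(\xi(t),\infty)$, so the time-dependent version of Lemma \ref{le:BPQ} (this is exactly why the paper generalized that lemma to $\theta=\theta(t)$) bounds $\xi(t)\le m$, giving $U>\Theta_+(t)$ on $[m,\infty)$, hence the liminf bound and condition (a2). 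This step is the crux of the proof and is absent from your proposal.

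In the case where both $\Theta_-$ and $\Theta_+$ are constant, your text concedes the essential point rather than proving it: that the interior minima stay away from $\hat p$ (equal-constants case), or that the layers escape to $\pm\infty$ (opposite-constants case), is precisely the scenario you later describe as the unresolved ``main obstacle'' of a backward-in-time approach to the whole outer loop. So at the only hard point of this case the proposal is a plan, not an argument. The paper disposes of this case by importing, with minor substitutions ($\be_\pm=0$, and Lemmas \ref{le:prep} and \ref{le:ep} in place of Lemmas 4.4 and 4.8 of \cite{p-Pauthier2}), the proof of Lemma 4.10 of \cite{p-Pauthier2}; some concrete reference or argument of this kind is needed to close your version.
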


For the proof of this result, we need the following lemma.
\begin{lemma}
  \label{le:prep} The following statements are valid:
  \begin{itemize}
  \item[(j)] The zero number $z(U(\cdot,t))$ is
  independent of $t\in\R$. 
   \item[(jj)] If, for some $t\in\R$, $J$ is a nodal interval of
  $U(\cdot,t)$ 
  (that is, $U(\cdot,t)$ is nonzero everywhere  in $J$ and vanishes on
  $\partial J$), then  $U(\cdot,t)$ has at most one critical point in
  $J$ (the critical point is nondegenerate by (ci)). 
  \end{itemize}
\end{lemma}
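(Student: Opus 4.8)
The two assertions are essentially independent, and I would present them in the stated order. Statement (j) rests on the zero-number machinery together with the simplicity of zeros coming from (ci) and the Case (T1) hypothesis $\Theta_\pm(t)\neq 0$; statement (jj) is a soft consequence of (ci) and (civ) alone.

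The plan for (j) is to show that the nonincreasing function $t\mapsto z(U(\cdot,t))$ (Lemma~\ref{le:zero}(ii)) in fact never drops. By Lemma~\ref{le:zero}(iii) a drop of a finite zero number forces a multiple zero, which (ci) forbids; the only remaining mechanism for a decrease is a zero escaping to $\pm\infty$, and this is precisely what Case (T1) rules out. Concretely, since $U$ is a bounded entire solution and $f$ is globally Lipschitz, interior parabolic estimates bound $U_{xx}$ and hence $U_t=U_{xx}+f(U)$ uniformly on $\R\times\R$, so $U$ is Lipschitz in $t$ uniformly in $x$. Fix a compact interval $[t_1,t_2]$. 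In Case (T1) the continuous functions $\Theta_\pm$ are nonzero on $[t_1,t_2]$, hence bounded away from $0$; at a fixed $t_0$ the limit $\Theta_+(t_0)\neq 0$ gives a lower bound $|U(\cdot,t_0)|\ge\mu_0$ on a half-line $[R_0,\infty)$, the uniform $t$-Lipschitz bound propagates this to a short time interval about $t_0$, and compactness of $[t_1,t_2]$ yields a single $M>0$ with $U(x,t)\neq 0$ for $|x|\ge M$ and all $t\in[t_1,t_2]$. Thus all zeros of $U(\cdot,t)$ lie in $I:=(-M,M)$ throughout $[t_1,t_2]$.

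With $U(\pm M,\cdot)\neq 0$ fixed, Lemma~\ref{le:zero} applies on $I$, and I would upgrade monotonicity of $z_I$ to constancy by a continuation argument: the finitely many zeros in $I$ are simple by (ci), so by the implicit function theorem they persist as continuous simple-zero branches; a decrease of $z_I$ as $t$ varies would require either two branches to collide, creating a multiple zero of $U(\cdot,t)$ and contradicting (ci), or a branch to reach $\partial I$, impossible since $U(\pm M,t)\neq 0$. Hence $z_I(U(\cdot,t))$ is locally constant, so constant on $[t_1,t_2]$; since all zeros lie in $I$ we have $z_I=z$ there, and as $[t_1,t_2]$ is arbitrary, $z(U(\cdot,t))$ is constant on $\R$. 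I expect the main technical point to be exactly the confinement of zeros to a bounded $x$-interval, which is the one place where the Case (T1) hypothesis is genuinely used: without $\Theta_\pm\neq 0$ a zero could slide off to infinity and the count could drop with no multiple zero.

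For (jj) I would argue purely from (ci) and (civ). Suppose $U(\cdot,t)>0$ on a nodal interval $J$ (the case $U<0$ is symmetric by reflection). Each critical point of $U(\cdot,t)$ in $J$ is a zero of $U_x(\cdot,t)$, hence simple by (ci), so it is a nondegenerate, therefore isolated, strict local extremum at which $U_x$ changes sign. Since $U>0$ on $J$, condition (civ) forbids local minima in $J$, so every critical point in $J$ is a strict local maximum. However, between two consecutive critical points $U_x$ keeps a constant sign while changing sign at each of them, which forces nondegenerate critical points along $J$ to alternate between maxima and minima; two or more of them would therefore include a local minimum in $J$, i.e. a forbidden positive local minimum. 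Hence $U(\cdot,t)$ has at most one critical point in $J$, completing the proof.
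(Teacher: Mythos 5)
Your proposal is correct and follows essentially the same route as the paper's proof: confine all zeros of $U(\cdot,t)$ to a compact spatial interval on compact time intervals using the Case (T1) hypothesis $\Theta_\pm\neq 0$, then deduce local constancy of $z(U(\cdot,t))$ from the simplicity of zeros guaranteed by (ci), and obtain (jj) from (ci) and (civ) via the max/min alternation of nondegenerate critical points (the paper states both steps tersely; you supply the details, e.g.\ the uniform-in-$x$ Lipschitz bound in $t$ for the confinement). The only quibble, which is harmless, is your parenthetical reading of Lemma \ref{le:zero}(iii) as ``a drop forces a multiple zero''---the lemma as stated gives the converse implication---but your actual argument never uses this direction, relying instead on implicit-function-theorem branch tracking together with the monotonicity of the zero number.
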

\begin{proof}
  For (j), it is sufficient to prove that $t\mapsto z(U(\cdot,t))$ is locally
  constant.  
  Since the limits $\Theta_\pm(t)=U(\pm\infty,t)$ are nonzero, given any
  $T>0$ we can find $R>0$, $\ep>0$ such that $U(x,t)\ne 0$ for any
  $(x,t)\in\R^2\setminus [-R,R]\times [T-\ep,T+\ep]$. Since, by (ci),
  the zeros of $U(\cdot, t)$ are all simple, we obtain that $z(U(\cdot,t))$
  is independent of $t$, for $t\in [T-\ep,T+\ep]$.

  Statement (jj) follows from statement (j) and  conditions
  (ci), (civ).
\end{proof}

\begin{proof}[Proof of Lemma \ref{le:t1alpha}]
  If the functions $\Theta_\pm$ are both constant (so that statements
  (i),(ii), and (iii)(a) of Lemma \ref{le:conv} apply), the conclusion
  can be proved by nearly the same arguments as those given in
  the proof of 
  \cite[Lemma 4.10]{p-Pauthier2}. We omit the details in this case, just mention
  the following simple changes that need to made
  in the proof: take $\be_-=\be_+=0$, and replace
  all references to Lemmas 4.4 and 4.8 of \cite{p-Pauthier2} by references
  to Lemmas \ref{le:prep} and \ref{le:ep} of the present paper,
  respectively. 

  We now consider the case when at least one of the functions
  $\Theta_\pm$ is nonconstant. For definiteness, we assume that
  $\Theta_+$ is nonconstant. Here, too, there are two analogous
  possibilities:  $\Theta_+(t)\in (\hat p,0)$ for all $t$ and
  $\Theta_+(t)\in (0,\hat q)$ for all $t$. We just consider the
  former. 
  Note that, since $\Theta_+(t)$ converges to a stable equilibrium of
  $\dot\xi=f(\xi)$ as $t\to\infty$, we necessarily have $f(\hat
  p)=0$. 

  If $U<0$, we get the conclusion $\al(U)=\{0\}$ immediately
  from Lemma \ref{le:ep}(i). Henceforth assume that $U(\cdot,t)$
  has at least one zero for all $t$ and denote by
  $\eta(t)$ the largest of these
  zeros. Since we are assuming $\Theta_+(t)\in (\hat p,0)$,
  $U(\cdot,t)<0$ in $(\eta(t),\infty)$.
  Lemma \ref{le:prep}(j) and condition (ci) imply that $\eta(t)$ is a
  $C^1$ function of $t\in\R$. By Lemma \ref{le:prep}(jj),
  $U(\cdot,t)$ has at most one critical point in $(\eta(t),\infty)$.

  Due to the monotonicity of $t\mapsto
  z_{(\eta(t),\infty)}(U_x(\cdot,t))$---note that Lemma \ref{le:zerot}
  is applicable, as (ci) gives $U_x(\eta(t),t))\ne 0$---one of the following
  possibilities occurs:
  \begin{itemize}
  \item[(p1)] There is $t_0\in\R$ such $U(\cdot,t)$ has unique
    critical point in $ (\eta(t),\infty)$ for all $t<t_0$; this
    critical point  is the global minimizer of 
    $U(\cdot,t)$ in $ (\eta(t),\infty)$).
  \item[(p2)] $U_x(x,t)<0\quad (x>\eta(t))$.
  \end{itemize}
  
 If (p1) holds, then, by Lemma \ref{le:BPQ}, there is
 $m$ such $\eta(t)<m$ for all $t\in (-\infty,t_0]$.
 Since $U(\cdot,t)<0$ in  $(\eta(t),\infty)$, condition (a1) of
 Lemma \ref{le:alpha}
 applies and we obtain $\al(U)=\{0\}$.

  Assume now that (p2) holds.  If $U(\cdot,t)>\Theta_+(t)$ for all
  $t$, then, since $\Theta_+(t)\to 0$ as $t\to-\infty$, we obtain the
  desired conclusion  $\al(U)=\{0\}$ immediately from Lemma
  \ref{le:ep}(ii). Otherwise, $z(U(\cdot,t)-\Theta_+(t))\ge 1$ for
  some $t$, and consequently for all large negative $t$ (note that
  $U(\cdot,t)-\Theta_+(t)$ is a solution of a linear parabolic
  equation \eqref{eq:lin} on $\R^2$). The zero number is finite and
  bounded uniformly in $t$ due the bound on the number of critical
  points of $U(\cdot,t)$, see 
  (ci). Hence $z(U(\cdot,t)-\Theta_+(t))$
  is independent of $t$ for large negative $t$, say for $t<t_0$;
  and the zeros of $U(\cdot,t)-\Theta_+(t)$ are then all simple
  for $t<t_0$. We denote by $\xi(t)$ the largest of these zeros;
  $\xi(t)$ is a $C^1$ function of $t$. Clearly, for any $t<t_0$, we
  have $U(\cdot,t)-\Theta_+(t)>0$ on $(\xi(t),\infty)$ and 
  $U(\cdot,t)$ is not
  monotone in the interval $(\xi(t),\infty)$. Therefore, by Lemma \ref{le:BPQ},
  there is $m\in\R$ such that $\xi(t)\le m$ for all $t<t_0$.
  Using this and the fact that $\Theta_+(t)\to 0$ as $t\to-\infty$, we
  obtain 
  \begin{equation}
    \label{eq:8c}
    \liminf_{x\in [m,\infty),\, t\to -\infty}U(x,t)\ge 0.
  \end{equation}
  Applying Lemma \ref{le:alpha} , we
  conclude that $\al(U)=\{0\}$ in this case as well.
\end{proof}

\subsection{Case (T2): $\Theta_\pm\equiv 0$}

Throughout this subsection, we assume that
$\Theta_\pm\equiv 0$. Hence, $(U(x,t),U_x(x,t))\to 
  (0,0)$ as $x\to\pm\infty$ for all $t$.

If
$U>0$ or $U<0$, then the desired conclusion \eqref{eq:6b} follows from
Lemma \ref{le:ep}(i). Henceforth we therefore
assume that there is $t_0$ such
that 
\begin{equation}\label{eq:zg1}
    z( U(\cdot,t_0))\ge 1.
  \end{equation}
The following then holds. 

  \begin{lemma}
  \label{le:z1}
  Relation \eqref{eq:zg1} implies that for all $t<t_0$ one has
  \begin{equation}\label{eq:z1}
    z(U(\cdot,t))= 1.
  \end{equation}
\end{lemma}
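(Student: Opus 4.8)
The plan is to prove the two inequalities $z(U(\cdot,t))\ge 1$ and $z(U(\cdot,t))\le 1$ separately for $t<t_0$. The lower bound is immediate: since $f(0)=0$, the function $U$ itself solves a linear equation of the form \eqref{eq:lin}, so by Lemma \ref{le:zero} applied on $I=\R$ (where no endpoint hypotheses are required) the map $t\mapsto z(U(\cdot,t))$ is nonincreasing; as all zeros are simple and finite in number by (ci), this quantity is well defined and finite, and \eqref{eq:zg1} gives $z(U(\cdot,t))\ge z(U(\cdot,t_0))\ge 1$ for every $t\le t_0$. The whole content of the lemma is therefore the upper bound $z(U(\cdot,t))\le 1$; note that once this is shown, monotonicity forces $z(U(\cdot,t_0))=1$ as well, so there is no inconsistency with \eqref{eq:zg1}.

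For the upper bound I would argue geometrically with the spatial trajectory $C_t:=\tau(U(\cdot,t))$. By (cii) together with $\Theta_\pm\equiv 0$ one has $(U(x,t),U_x(x,t))\to(0,0)$ as $x\to\pm\infty$, so $C_t$ is an arc issuing from the origin and returning to it; by (ciii) its closure $J_t$ is then a simple closed curve based at the origin, and by (c0) it is contained in $\ol{\cI}(\Lao)$. Write $k:=z(U(\cdot,t))$. Since $U(\cdot,t)$ has $k$ simple zeros and finitely many nondegenerate critical points, and since (civ) forbids a second extremum inside any nodal interval, each of the $k+1$ nodal intervals (the two outer ones decaying to $0$) carries exactly one critical point. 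Hence $U_x(\cdot,t)$ has $k+1$ zeros and $C_t$ meets the coordinate axes transversally in exactly $2k+1$ points: the $k$ zeros of $U$ on the $v$-axis and the $k+1$ critical points on the $u$-axis.

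The key point is that every one of these $2k+1$ crossings is \emph{clockwise}. Indeed, any phase curve $(\varphi,\varphi')$ moves rightward where $\varphi'>0$ and leftward where $\varphi'<0$, so the $v$-axis crossings are automatically clockwise; and at a $u$-axis crossing the vertical velocity is $U_{xx}$, which by (civ) is negative at the positive maxima on the positive $u$-axis and positive at the negative minima on the negative $u$-axis, again clockwise. Consequently, lifting the argument $\theta(x)=\arg\bigl(U(x,t)+iU_x(x,t)\bigr)$ to a continuous $\tilde\theta:\R\to\R$, transversality (ci) prevents $\tilde\theta$ from ever returning to a previously crossed axis (this would be a counterclockwise crossing), so $\tilde\theta$ drops by exactly $\pi/2$ at each successive crossing. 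Counting the two end contributions as the curve leaves and re-enters the origin, the total clockwise turning satisfies $|\tilde\theta(+\infty)-\tilde\theta(-\infty)|\in\bigl(k\pi,(k+1)\pi\bigr)$, independently of the signs of the outer bumps. If $k\ge 2$ this exceeds $2\pi$; but a simple closed curve cannot wind by more than $2\pi$ about a base point lying on it without self-intersecting. This contradicts (ciii), so $k\le 1$, and together with the lower bound we obtain $z(U(\cdot,t))=1$ for all $t<t_0$.

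The step requiring the most care is the planar-topology fact just invoked — that the total turning of the position vector of a simple closed curve about a point of the curve has absolute value strictly below $2\pi$ — together with the bookkeeping that converts the $2k+1$ monotone clockwise axis-crossings into this total turning. One must in particular rule out hidden backtracking of $\tilde\theta$ within a quadrant, which is exactly where the transversality in (ci) and the sign condition (civ) enter. A secondary point to verify carefully is the legitimacy of the zero-number monotonicity on all of $\R$ when $U(\cdot,t)$ tends to the \emph{unstable} value $0$ at $\pm\infty$; here one uses that $z(U(\cdot,t))$ is already known finite by (ci) and that zeros cannot be created, so none can enter from infinity.
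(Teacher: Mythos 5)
Your reduction is carried out correctly up to the decisive step: the lower bound via zero-number monotonicity is exactly the paper's first line (and your worry about the infinite endpoints is unfounded, since Lemma \ref{le:zero} imposes endpoint conditions only at finite endpoints); the count of one critical point per nodal interval, the transversality of all $2k+1$ axis crossings, and the fact that (ci), (civ) and the phase-plane identity $\frac{d}{dx}U=U_x$ force every crossing to be clockwise with exactly $-\pi/2$ of turning per quadrant transit are all sound. The genuine gap is the ``planar-topology fact'' you invoke to finish: that a simple closed curve cannot turn by more than $2\pi$ about a base point lying on it. This is not a standard quotable theorem (the Jordan curve theorem controls winding numbers about points \emph{off} the curve; for a point \emph{on} the curve the ``total turning'' is not a winding number at all), you give no proof of it, and it is precisely where the entire content of the lemma sits --- everything before it is bookkeeping. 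A proof of this fact in your setting requires its own Jordan-curve trapping argument: since $k\ge 2$ forces at least two crossings of, say, the positive $u$-axis, both downward, one takes the trajectory arc between two consecutive such crossings, closes it with the segment of the $u$-axis between them to get a Jordan curve $J$, observes that the trajectory can never cross the arc (by (ciii)) and can cross the segment only downward, and concludes that the pieces of the trajectory before the first crossing and after the second lie in \emph{different} complementary components of $J$; since both pieces must converge to the origin, which lies in exactly one component, this is a contradiction. Without this (or an equivalent) argument, your proof is incomplete at its crux.

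It is worth noting that once you supply that missing argument, you have essentially reconstructed the paper's proof in a slightly different costume: the paper argues by contradiction from two zeros $x_1<x_2$ of $U(\cdot,t)$, takes the piece $\Gamma$ of the spatial trajectory up to $x_2$, closes it with a segment of the $v$-axis into a Jordan curve, and uses simplicity (ciii) together with the one-directional (``clockwise'') crossing property --- $U$ increases with $x$ in the quadrant $\{u,v>0\}$ --- to show the trajectory can never return to the origin as $x\to\infty$. So your clockwise-crossing analysis is the right mechanism, but the winding-number packaging defers rather than performs the topological work; the trapping construction has to be done explicitly, and doing it is the paper's proof.
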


\begin{proof}
  By monotonicity, the zero number in \eqref{eq:z1} is at least 1
  for all $t<t_0$. We prove that it is exactly 1, or, equivalently,
  that for any
  $t<t_0$ the spatial trajectory $\tau(U(\cdot,t))$ intersects
  the $v$-axis at
  exactly one point.
  We go by contradiction, assume that for some $t<t_0$ the function
  $U(\cdot,t)$ has at least two zeros and denote by $x_1<x_2$ the two
  smallest ones.  We also assume that $U(x,t)>0$ when $x<x_1$;
  the case $U(x,t)<0$  when $x<x_1$ being analogous.  Clearly,
  by property (civ) and the fact that $U(x,t)\to 0$  
  as $x\to-\infty$, $U(\cdot,t)$ has a unique critical point in each
  of the intervals $(-\infty,x_1)$, $(x_1,x_2)$; we denote them by
  $y_1$, $y_2$, respectively. We have $U_x(\cdot,t)>0$ on
  $(-\infty,y_1)\cup (y_2,x_2)$ and  $U_x(\cdot,t)<0$ on
  $ (y_1,y_2)$. 

\begin{figure}[h]
  \centering
  \hspace{-1.35cm} 
 \includegraphics[scale=.8]{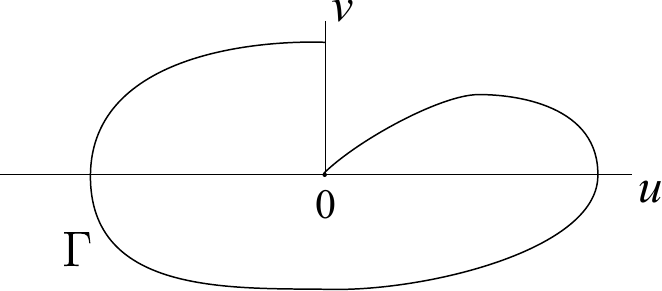}  
 \caption[The curve $\Ga$]{The curve $\Ga\subset\tau(U(\cdot,t))$
   whose existence is ruled out
    in the proof of Lemma \ref{le:z1}
  \label{contrad}}
\end{figure}

  Consider now the following curve, a part
  of the spatial trajectory $\tau(U(\cdot,t))$:
  \begin{equation*}
    \Ga:=\{(U(x,t),U_x(x,t)):x< x_2\}.
  \end{equation*}  
  By (ciii),  $\Ga$ is a simple curve. It is the union of the
  points $(U(y_1,t),0)$, $(U(y_2,t),0)$ on the $u$-axis, with
  $U(y_1,t)>0>U(y_2,t)$, 
  and the  sets
   $ \{(U(x,t),U_x(x,t)):x\le y_1\},\  \{(U(x,t),U_x(x,t)):y_1<x<
    y_2\}, \ \{(U(x,t),U_x(x,t)):y_2<x<
    x_2\}$, which are contained, respectively, in the quadrant $\{(u,v):u,v>
  0\}$, the half-plane $\{(u,v):v< 0\}$, and the quadrant 
  $\{(u,v):u<0<v\}$ (cp. Figure \ref{contrad}). 
  Since  $(U(x,t),U_x(x,t))\to 
  (0,0)$ as $x\to\pm\infty$, the union of $\Ga$ and the closed segment
  of the $v$-axis between the points $(0,U_x(x_2,t)$ and $(0,0)$, is a
  Jordan curve. Using now the facts that $\tau(U(\cdot,t)$ has no
  self-intersections (cp. (ciii)) and
  that 
  $U(x,t)$ increases with $x$ when $(U(x,t),U_x(x,t))$ is
  in the quadrant $\{(u,v):u,v> 0\}$, we obtain that
  $(U(x,t),U_x(x,t))$ cannot converge to $(0,0)$ as $x\to\infty$.
  We have thus found a contradiction, completing the proof.     
\end{proof}

Whenever \eqref{eq:z1} holds, conditions (civ), (ci), and (T2) imply
that the function $U(\cdot,t)$ has exactly two critical points,
both nondegenerate; one of them is the global
maximum point of $U(\cdot,t)$, further denoted by $\ol\xi(t)$,
the other one is the global minimum point of $U(\cdot,t)$, denoted by
$\ul\xi(t)$ (cp. Figure \ref{graphs-traj-U}). The unique zero of $U(\cdot,t)$ is denoted by $\xi(t)$;
it is between the two critical points.

By the monotonicity of the zero number,
\begin{equation}
  \label{eq:4}
  z(U(\cdot,t))\le 1\quad (t\in\R).
\end{equation}
 If $U(\cdot,t)>0$ for some $t$,
then $U(\cdot,t)$ has a unique critical point, the point of 
global maximum; we still denote it by $\ol\xi(t)$. If
$U(\cdot,t)<0$ for some $t$,
then the unique critical point of
$U(\cdot,t)$ is the point of 
global minimum, still denoted by $\ul\xi(t)$.

\begin{figure}[ht]
  \addtolength{\belowcaptionskip}{20pt}
  \centering
 \includegraphics[scale=.8]{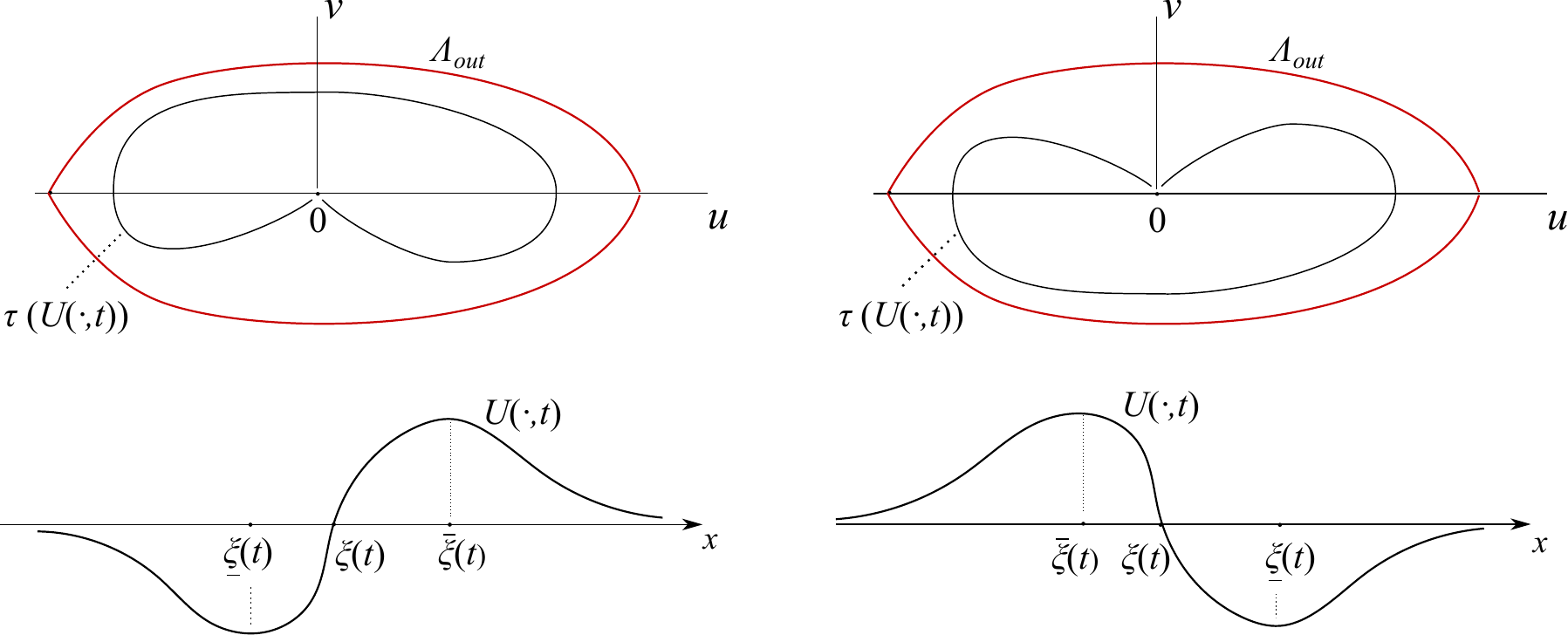}
  
  \caption[Graphs and spatial trajectories]{The spatial trajectories
    (top figures)
    and graphs of $U(\cdot,t)$ when $U(\cdot,t)$ has a zero $\xi(t)$
    (necessarily unique). The figures depict both possibilities
    $U(\cdot,t)<0$ in $(-\infty,\xi(t))$ (the figures on the left) and
    $U(\cdot,t)>0$ in $(-\infty,\xi(t))$. 
  \label{graphs-traj-U}}
\end{figure}

We now prove that the desired conclusion holds when
$\Lao$ is a homoclinic loop:
\begin{lemma}
~\\
  \label{le:finalc2a1}
  If $\Lao$ is a homoclinic loop as in \emph{(A1)}, then \eqref{eq:6b}
  holds: 
  $\al(U))=\{0\}$, $\tau(\om(U))\subset\Lao.$
\end{lemma}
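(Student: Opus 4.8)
The plan is to deduce the entire statement from Lemma~\ref{le:ep}(i), once we know that $U$ stays uniformly below the peak $\hat q$ of the ground state. Recall that in case \emph{(A1)}, with the normalization $\hat p=\ga$, $\hat q=\Phi(0)$ fixed as in the proof of Lemma~\ref{le:conv}, one has $\hat p=\ga<0<\hat q$, so $\Phi$ changes sign and crosses zero transversally exactly twice; in particular $z(\Phi)=2$. On the other hand, since we are in the situation \eqref{eq:zg1}, relation \eqref{eq:4} gives $z(U(\codt,t))\le 1$ for every $t\in\R$. This gap between $z(\Phi)=2$ and $z(U(\codt,t))\le 1$ is exactly the mechanism preventing the spatial trajectories of $U$ from ever approaching the non-equilibrium vertex $(\hat q,0)$ of $\Lao$.

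The key step I would carry out is the claim $\sup_{\R^2}U<\hat q$, argued by contradiction. If $\sup_{\R^2}U=\hat q$, pick $(x_n,t_n)$ with $U(x_n,t_n)\to\hat q$. Since the points $(U(x_n,t_n),U_x(x_n,t_n))$ lie in the bounded set $\ol{\cI}(\Lao)$ and $(\hat q,0)$ is the only point of that set with first coordinate $\hat q$, we get $(U(x_n,t_n),U_x(x_n,t_n))\to(\hat q,0)\in\Lao$, so \eqref{eq:58} holds with $K=\Lao$. Then Lemma~\ref{le:basicrel}(i) yields, along a subsequence, $U(\codt+x_n,\codt+t_n)\to\varphi$ in $C^1_{loc}(\R^2)$, with $\varphi$ a steady state satisfying $\tau(\varphi)\subset\Lao$. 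Evaluating at the base point gives $\varphi(0)=\hat q$ and $\varphi_x(0)=0$; as the only steady states with trajectory in $\Lao$ are the constant $\ga$ and the shifts of $\Phi$, and $\varphi(0)=\hat q\ne\ga$, unique solvability of the initial value problem for \eqref{eq:steady} forces $\varphi=\Phi$. Because $\Phi$ has two simple zeros, $C^1_{loc}$ convergence makes the shifted functions $U(\codt+x_n,t_n)$ possess at least two zeros for all large $n$; by shift invariance $z(U(\codt,t_n))=z(U(\codt+x_n,t_n))\ge 2$, contradicting $z(U(\codt,t))\le1$. Hence $\sup_{\R^2}U<\hat q$.

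Finally, setting $\vartheta:=\hat q-\sup_{\R^2}U>0$ we have $U\le\hat q-\vartheta$ on $\R^2$, so Lemma~\ref{le:ep}(i) applies directly and gives $\om(U)=\{\hat p\}=\{\ga\}$ and $\al(U)=\{0\}$; in particular $\tau(\om(U))=\{(\ga,0)\}\subset\Lao$, which is \eqref{eq:6b}. The opposite normalization $\hat q=\ga$, $\hat p=\Phi(0)<0<\ga$ is entirely symmetric: the inaccessible vertex is then $(\hat p,0)$, one shows $\inf_{\R^2}U>\hat p$ by the same extraction-and-zero-number argument, and concludes with the second halves of Lemma~\ref{le:ep}(i). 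I expect the only genuinely delicate point to be the contradiction in the middle paragraph, where identifying the limit profile as the \emph{full} ground state $\Phi$ (rather than the constant $\ga$ or a truncated profile) is what activates the bound $z(\Phi)=2>1\ge z(U(\codt,t))$; the rest is bookkeeping.
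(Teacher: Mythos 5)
Your proposal is correct and follows essentially the same route as the paper's proof: both reduce the lemma to the uniform bound $\sup U\le \hat q-\vartheta$, derive a contradiction by extracting (via Lemma \ref{le:basicrel}(i)) a $C^1_{loc}$ limit that must be the ground state $\Phi$, whose two simple zeros violate the bound $z(U(\cdot,t))\le 1$ from \eqref{eq:4}, and then conclude with Lemma \ref{le:ep}(i). The only cosmetic difference is that the paper takes $x_n$ to be the global maximum point of $U(\cdot,t_n)$ (so $U_x(x_n,t_n)=0$ exactly), whereas you deduce $U_x(x_n,t_n)\to 0$ by a compactness argument in $\overline{\mathcal{I}}(\Lao)$; both are fine.
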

\begin{proof}
  For definiteness, we assume that $\ga=\hp$, so $\Phi$ is a ground
  state at level $\hp$ and $\hq=\Phi(0)$ is its maximum; the case
  $\ga=\hq$ is similar. We prove that for some $\vartheta>0$
  \begin{equation}
    \label{eq:34}
    \sup_{x\in\R} U(x,t)<\hq-\vartheta\quad(t\in\R).
  \end{equation}
  Once this is done, the desired conclusion follows immediately from
  Lemma \ref{le:ep}(i).

  Assume that \eqref{eq:34} is not true for any
  $\vartheta>0$. Then there is a sequence $t_n\in\R$ such that
  $\sup_{x\in\R} U(\cdot,t_n)\upto \hq$. In particular,
  $U(\cdot,t_n)$ takes  positive values (for large enough $n$),
  so its global maximum point $\ol \xi(t_n)$ exists.
  We can therefore take $x_n=\ol\xi(t_n)$
  and thus have  $U_x(x_n,t_n)=0$. Using 
  Lemma \ref{le:basicrel}(i), we obtain that, passing to a subsequence if
  necessary, $U(\cdot+x_n,t_n)\to\Phi$ in 
  $C^1_{loc}(\R)$. But this implies that for large $n$ the function
  $U(\cdot+,t_n)$ has at least two zeros, a contradiction to \eqref{eq:4}.
  Thus \eqref{eq:34} holds and the proof is
  complete.
\end{proof}

Next, we treat the case when $\Lao$ is a heteroclinic loop.
\begin{lemma}
  \label{le:finalc2a2}
   Assume that $\Lao$ is a heteroclinic
  loop as in {\rm (A2)}.  Then \eqref{eq:6b}
  holds.
\end{lemma}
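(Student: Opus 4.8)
\emph{Setup and strategy.} Here $\hat p=\ga_-<0<\ga_+=\hat q$, and by Lemma~\ref{le:z1} together with (c0)--(civ) and (T2) each profile $U(\cdot,t)$ with $t<t_0$ is a two--hump pulse: it has a single (simple) zero $\xi(t)$ separating one positive and one negative hump, with exactly two nondegenerate critical points, and $z(U(\cdot,t))\le1$ for all $t$. I prove the two assertions of \eqref{eq:6b} separately.

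\emph{Step 1: $\al(U)=\{0\}$.} The plan is to apply Lemma~\ref{le:alpha}. Take $\theta\equiv0$ and $\xi(t)=\sup\{x:U(x,t)=0\}$, the unique (hence $C^1$) zero for $t<t_0$. Since $U(\cdot,t)$ always has a critical point to the right of $\xi(t)$ (the hump on that side), $U_x(\cdot,t)$ is not of constant sign on $(\xi(t),\infty)$; by the last assertion of Lemma~\ref{le:BPQ} (with $J=(-\infty,t_0)$) this forces $\limsup_{t\to-\infty}\xi(t)<\infty$, and its reflected version gives $\liminf_{t\to-\infty}\xi(t)>-\infty$. With continuity of $\xi$, this makes $\xi$ bounded on $(-\infty,t_0]$, say $\xi(t)\le m$ there, so $U(\cdot,t)$ has a fixed sign on $[m,\infty)$ for all $t\le t_0$, namely that of its right hump. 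If that hump is negative, then $\limsup_{x\ge m,\,t\to-\infty}U(x,t)\le0$ and, since $\Theta_+\equiv0\ne\ga_-=\hat p$, condition (a1) applies; if it is positive, then $\liminf_{x\ge m,\,t\to-\infty}U(x,t)\ge0$ and, since $\Theta_+\equiv0\ne\ga_+=\hat q$, condition (a2) applies. Either way $\al(U)=\{0\}$.

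\emph{Step 2: $\tau(\om(U))\subset\Lao$.} First I reduce the claim. Any entire solution $\hat U$ with $\hat U(\cdot,t)\in\om(U)\subset\om(u)$ satisfies the dichotomy of \cite[Proposition~3.2]{p-Pauthier2} combined with Lemma~\ref{le:entirein0}: either $\hat U\equiv0$; or $\hat U$ is a steady state whose orbit lies in a chain and hence, being inside $\ol{\mathcal I}(\Lao)$, in $\Lao$; or $\hat U$ satisfies (c0)--(civ) with $\bigcup_t\tau(\hat U(\cdot,t))\subset\Pi_0$ (the only component of $\cP_0$ inside $\ol{\mathcal I}(\Lao)$, since $\Sigma_{in}(\Pi_0)=\{(0,0)\}$). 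Thus $\tau(\om(U))\subset\Lao$ follows once I show $\tau(\om(U))\cap(\Pi_0\cup\{(0,0)\})=\emptyset$. To that end I intend to prove that the humps invade the stable equilibria, $\max_xU(\cdot,t)\to\ga_+$ and $\min_xU(\cdot,t)\to\ga_-$: since $f'(0)>0$ while $f'(\ga_\pm)<0$, once the positive hump exceeds a small threshold it cannot recede, and comparison from below with a compactly supported expanding subsolution built from a small periodic steady state near $0$ (as in the comparison in the proof of Lemma~\ref{le:alpha}) drives the positive part up to $\ga_+$, symmetrically the negative part down to $\ga_-$. Granting this, Lemma~\ref{le:basicrel}(i) identifies the local limits: at the peaks $(U(\ol\xi(t),t),0)\to(\ga_+,0)\in\Lao$ forces local flattening to the constant $\ga_+$ (and to $\ga_-$ at the other peak), while across the interface $\xi(t)$ the local limit is a shift of a standing wave $\Phi^+$ or $\Phi^-$. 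In every configuration — interface bounded (limit a standing wave) or drifting to $\pm\infty$ (limit a constant $\ga_\pm$) — each $\varphi\in\om(U)$ has $\tau(\varphi)\subset\Lao$; the bound $z(U(\cdot,t))\le1$ additionally excludes any nonconstant periodic profile, hence any orbit meeting $\Pi_0$, exactly as in Lemma~\ref{le:conv}(iii)(b).

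\emph{Main obstacle.} The crux is the invasion step together with the exclusion of $0\in\om(U)$. Because $\om(U)$ is a \emph{no--shift} limit set, knowing merely that the extrema approach $\ga_\pm$ does not suffice: one must prevent the formation of a growing spatial region on which $U$ stays near the unstable value $0$, which would place the constant $0$, hence $(0,0)$, into $\om(U)$ and destroy the desired inclusion. Controlling the location of the transition and establishing full invasion of the unstable state from both sides — the hair--trigger effect of $f'(0)>0$ — is the genuinely new difficulty in case (T2), absent both when $\Sigma_{in}$ is a nontrivial chain and when $U$ is one--signed.
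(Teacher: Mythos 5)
Your Step 1 is correct, and it is a genuinely different (and more economical) route to $\al(U)=\{0\}$ than the paper's: boundedness of the interface $\xi(t)$ as $t\to-\infty$ via Lemma \ref{le:BPQ}, the resulting fixed sign of $U(\cdot,t)$ on $[m,\infty)$, then condition (a1) or (a2) of Lemma \ref{le:alpha}. But note what this route loses: in this case the paper does not prove merely $\al(U)=\{0\}$; it proves the stronger statement $A(U)=\{0\}$ (translates excluded as well), by ruling the constants $\ga_\pm$ out of $A(U)$ with a comparison argument and then invoking Lemmas \ref{le:basicrel} and \ref{le:squeezing}. That stronger statement is not a luxury --- it is precisely the input needed in the second half of the proof, and your weaker conclusion cannot substitute for it.

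Step 2 is where the genuine gap lies, and you name it yourself: the exclusion of $0\in\om(U)$, equivalently of $(0,0)\in\tau(\om(U))$, is never proved --- your closing paragraph labels it ``the genuinely new difficulty'' and stops there. (The dichotomy part of Step 2 is fine in substance, though your appeal to \cite[Proposition 3.2]{p-Pauthier2} and Lemma \ref{le:entirein0} presupposes $U(\cdot,t)\in\om(u)$, which is not among the hypotheses of this lemma: Section \ref{sec:entire} deliberately treats arbitrary entire solutions satisfying (c0)--(civ). The paper instead derives the alternative $\tau(\om(U))=\{(0,0)\}$ or $\tau(\om(U))\subset\Lao$ from Theorem \ref{thmPP1}, whose hypothesis (ii) holds thanks to the sign structure \eqref{eq:11}.) Your proposed ``invasion/hair-trigger'' mechanism cannot close the gap: $U$ changes sign, so there is no monostable comparison from below on all of $\R$, and --- as you yourself observe --- nothing in that sketch prevents the two humps from drifting to $\mp\infty$ while a plateau near $0$ grows in between, which is exactly the scenario to be excluded. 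The paper's actual argument runs as follows: assume $\om(U)=\{0\}$; Lemma \ref{le:basicrel}(ii) puts a steady state with trajectory in $\Lao$ into $A(U)\cup\Om(U)$, and since $A(U)=\{0\}$, one of $\ga_\pm$ lies in $\Om(U)$, say $U(\cdot+x_n,t_n)\to\ga_+$ with $t_n\to+\infty$; the monotonicity relations furnished by Lemma \ref{le:BPQ} (using $U_x(\xi(t),t)<0$) force $x_n\to-\infty$; finally, a comparison with a fixed small periodic steady state $\psi$ anchored at a \emph{fixed} location $x_{n_0}$, with boundary control again coming from those monotonicity relations and from the assumption $\om(U)=\{0\}$ itself, yields $U(x_{n_0},t)>\ep$ for all $t>t_{n_0}$, contradicting $\om(U)=\{0\}$. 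Observe that this argument collapses if one only knows $\al(U)=\{0\}$: Lemma \ref{le:basicrel}(ii) then leaves open the possibility that $\ga_\pm$ enters $A(U)$ through translates $x_n\to\pm\infty$ with $t_n\to-\infty$, which your Step 1 does not exclude. So to complete your proof you would need both to upgrade Step 1 to $A(U)=\{0\}$ and to supply the fixed-anchor comparison argument; as written, the second assertion of \eqref{eq:6b} is not established.
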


\begin{proof}
  We first show that 
  \begin{equation}
    \label{eq:35}
    A(U)=\{0\}.
  \end{equation}
  Note that this conclusion---stronger than the needed
  $\al(U)=\{0\}$---is equivalent to the convergence  $U(\cdot,t)\to
  0$ in $L^\infty(\R)$ (not just in $L_{loc}^\infty(\R)$) as $t\to-\infty$.

  With  $\ol\xi(t)$, $\ul\xi(t)$, $\xi(t)$ as above (cp. Figure
  \ref{graphs-traj-U}), and with both $\ol\xi(t)$ and $\ul\xi(t)$
  defined for $t<t_0$, we assume that
  \begin{equation}
    \label{eq:11}
    U(x,t)>0 \quad (x\in (-\infty,\xi(t)), \, t\in\R),\qquad
    U(x,t)<0 \quad (x\in (\xi(t),\infty), \, t\in\R).
  \end{equation}
   The case with the reversed inequalities  is analogous. 

  It is sufficient to prove that the constants $\ga_\pm$ are not
  contained in $A(U)$. Indeed, if this holds, then $A(U)$ does not
  contain any shifts of the standing waves $\Phi_\pm$ either (by
  compactness and translation invariance of $A(U)$).  Consequently, by
  Lemma \ref{le:basicrel},
  $\displaystyle \dist\left( \tau\left( A(U)\right),\Lambda_{out}
  \right)>0,$ and \eqref{eq:35} follows upon an application of Lemma
  \ref{le:squeezing}.

  Assume, for a contradiction that $\ga_+\in A(U)$ (arguments to rule
  out the possibility $\ga_-\in A(U)$ are similar and are omitted).
  Since the function
  $U(\cdot,t)$ is monotone neither on $(-\infty,\xi(t))$ nor on
  $(\xi(t),\infty)$, Lemma \ref{le:BPQ} tells us that for some
  $K>0$ one has
  \begin{equation}\label{eq:333}
    | \xi(t)|<K,\quad( t<t_0).
  \end{equation}
  From the assumption that $\ga^+\in A(U)$ we obtain that there are sequences
  $x_n$, $t_n$, with $t_n\to-\infty$, such
  that $U_n:=U(\cdot+x_n,\cdot+t_n)\to \ga^+$.  This, \eqref{eq:333},
  and \eqref{eq:11}  in  particular imply that 
  $x_n\to-\infty$.

  Let $\psi$ be any periodic solution of \eqref{eq:steady} with
  $\tau(\psi)\subset\Pi_0$ and $\psi(0)>0$, $\psi'(0)=0$.
  Let $2\rho>0$ be the minimal period of $\psi$, so
  $\psi(0)$ is the maximum of $\psi$, and
  $\psi(-\rho)=\psi(\rho)<0$ is the minimum of $\psi$.  Obviously,
  for all large enough $n$, say for all $n>n_0$, we have
  \begin{equation*}
    U(\cdot+x_n,t_n)>\psi\text{ on $[-\rho,\rho]$}.
  \end{equation*}
  Also, due to \eqref{eq:333} and the convergence $x_n\to-\infty$,  we
  have, making $n_0$ larger if necessary,
  \begin{equation*}
    U(\pm\rho +x_n,t)>0>\psi(-\rho)=\psi(\rho)\quad(n>n_0,\
    t\in (t_n,t_0]). 
  \end{equation*}
  Therefore, by the comparison principle, for $n>n_0$,
  \begin{equation*}
    U(x+x_n,t)>\psi(x)\quad (x\in [-\rho,\rho], \quad t\in (t_n,t_0]).
  \end{equation*}
  In particular, at $t=t_0$, we obtain
  \begin{equation*}
    \max_{x\in [-\rho,\rho]} U(x+x_n,t_0)\ge \max\psi>0 \quad
    (n>n_0).
  \end{equation*}
  Since $x_n\to -\infty$, we have a contradiction to the relation
  $U(-\infty,t_0)=\Theta_-=0$.  This contradiction completes the proof of
  \eqref{eq:35}.  

  We now prove the second needed conclusion:
  \begin{equation}
    \label{eq:10}
    \tau(\omega(U))\subset\Lao.
  \end{equation}
  If $U(\cdot,t)>0$ for some $t$, then $U(\cdot,t)\to\ga^+$ as
  $t\to\infty$ (with 
  the convergence in $L^\infty_{loc}(\R)$) by 
  the well known property of solutions with 
range in the monostable interval $(0,\ga^+)$. Similarly, if  
  $U(\cdot,t)<0$ for some $t$, then $U(\cdot,t)\to\ga^-$ as
  $t\to\infty$. In these cases we are done. We proceed assuming that
  $U(\cdot,t)$ changes sign for all $t$, that is, the equality holds
  in \eqref{eq:4} and $\ol\xi(t)$, $\ul\xi(t)$, $\xi(t)$ are defined
  for all $t\in\R$. For definiteness, we again assume that
  \eqref{eq:11} holds;
  the case with the reversed inequalities is analogous. 
  For this part of the proof, we 
  adapt some arguments
  from \cite[Proof of Lemma 4.13]{p-Pauthier2}.

  We claim that the following alternative holds:
  \begin{equation}\label{eq:341-3}
    \tau\left(\omega(U)\right)=\{(0,0)\} \textrm{ or } \tau\left(\omega(U)\right)\subset\Lambda_{out}.
  \end{equation} Indeed, relations \eqref{eq:11}
  imply that hypothesis (ii) of Theorem \ref{thmPP1} holds (namely,
  by \eqref{eq:11}, $V_\la U(x,t)>0$ for $x\approx \pm\infty$ and the
  finiteness of the zero number then follows from Lemma
  \ref{le:zero}(i)). From Theorem \ref{thmPP1} we obtain that  $\omega(U)$
  consists of steady states. It does not contain nonconstant
  periodic steady states (due to (ci)),  so
  $\tau(\omega(U))\subset \bar\Pi_0\setminus\mathcal{P}_0$. Since
  $\tau(\omega(U))$ is connected, \eqref{eq:341-3} must hold.

  Thus, to complete the proof of \eqref{eq:10}, we
  just need to rule out the possibility
  \begin{equation}
    \label{eq:40}
    \omega(U)=\{0\}. 
  \end{equation}
  Assume it holds. We derive a contradiction. Pick any  $\e>0$
  with $\ep<\min\{-\ga^-, \ga^+\}$. 
  Relation \eqref{eq:40} in particular implies that
  \begin{equation}\label{eq:341-6}
    \text{for any  $M>0$ there is $T=T(M)$ such that $
      -\e<U(x,t)<\e\quad(x\in(-M,M),\ t>T(M))$.}
  \end{equation}

  By Lemma \ref{le:basicrel}(ii), $\Om(U)\cup A(U)$
  contains one of the constants $\ga_\pm$ (or a shift of one of the
  standing waves $\Phi_\pm$, and, consequently, also both constants
  $\ga_\pm$). Since we have proved that $A(U)=\{0\}$,
  $\Om(U)$ must 
  contain one of these constants.
  We only consider the case $\gamma_+\in\Omega(U)$, the
  case $\gamma_-\in\Omega(U)$ being similar.  Hence, there is a
  sequence  $(x_n,t_n)$ with  $t_n\to\infty$ such that
  \begin{equation}\label{eq:341-5}
    U(\cdot+x_n,t_n)\underset{n\to\infty}{\longrightarrow}\gamma_+,
  \end{equation}
  with the convergence in $L^\infty_{loc}(\R).$ 
 In view of  \eqref{eq:341-6}, we have
  $|x_n|\to\infty$.  We claim that necessarily $x_n\to-\infty.$
  To show this, first observe that, by
  \eqref{eq:11} and  the simplicity of the zero $\xi(t)$, we have
  $U_x(\xi(t),t)<0.$ Using this  and
  Lemma \ref{le:BPQ}, we obtain the
  following monotonicity relations for each $t>0$:
  \begin{align}
    \textrm{if }\xi(t)>\xi(0), &\textrm{ then }U_x(\cdot,t)<0 \textrm{ on }\left(\xi(0),\xi(t)\right], \label{eq:341-42} \\
    \textrm{if }\xi(t)<\xi(0), &\textrm{ then }U_x(\cdot,t)<0 \textrm{ on }\left[\xi(t),\xi(0)\right). \label{eq:341-43} 
  \end{align}
  From \eqref{eq:11} and \eqref{eq:341-5}, it follows that there is
  $n_1$ such that 
  $\xi(t_n)>x_n$ for all  $n>n_1$. If for some $n>n_1$
   it is also true that $x_n>\xi(0)$, then the relations
  $\xi(t_n)>x_n>\xi(0)$ and 
  \eqref{eq:341-42} give $U(\xi(0),t_n)>U(x_n,t_n)$. This inequality
  can hold only for finitely many $n$, due to
  \eqref{eq:341-6}, \eqref{eq:341-5}. Thus for all large enough $n$ we
  have $x_n\le \xi(0)$. Since $|x_n|\to\infty$, it must be
  true that $x_n\to-\infty$, as claimed. 
  
  Pick now a periodic solution $\psi$ of \eqref{eq:steady} with
  $\tau(\psi)\subset\Pi_0$ such that $\min\psi<-\e$ and
  $\max\psi>\e$. We shift $\psi$ such that 
  $\max\psi=\psi(0)$. Let $2\rho>0$ be the
  minimal period of $\psi$; so 
  \begin{equation*}
    \min\psi=\psi(\pm\rho)<-\e,\qquad \max\psi=\psi(0)>\e.
  \end{equation*}
By \eqref{eq:341-5}, for $n$ large enough,
  \begin{equation}\label{dot}
    U(x_n+x,t_n)>\psi(0)>\e\quad(x\in(-\rho,\rho)).
  \end{equation}
This and \eqref{eq:11} in particular imply that $\xi(t_n)>x_n+\rho$. We 
now show that for some large enough $n_0$, the following must
  hold in addition to \eqref{dot}:
  \begin{equation}\label{eq:341-7}
    U(x_{n_0}\pm\rho,t)>\psi(\pm\rho)\quad( t>t_{n_0}).
  \end{equation}
  Indeed, if  not,  then there exists arbitrarily large $n$ such that 
  for some  $\tilde{t}_n>t_n$ one has
  $$\displaystyle U\left( x_n+\bar\rho,\tilde{t}_n\right)=\psi(\bar\rho)<-\e,$$
  where $\bar\rho$ is either $-\rho$ or $\rho$.
  Since $U(\cdot,t)>0$ on
  $(-\infty,\xi(t))$, it follows that
  $\displaystyle\xi\left(\tilde{t}_n\right)<x_n+\bar\rho$.
  But, due to
  $x_n\to-\infty$, we also have $x_n+\bar\rho<\xi(0)$ if $n$ is large
  enough; so, by \eqref{eq:341-43},
  $\displaystyle U\left(\xi(0),\tilde{t}_n\right)<U\left(
    x_n+\bar\rho,\tilde{t}_n\right)<-\e$. This cannot be true for
  arbitrarily large $n$, due to
  \eqref{eq:341-6}, 
  so \eqref{dot},\eqref{eq:341-7} both
  hold for some  $n_0$.

  Using \eqref{dot}, \eqref{eq:341-7}, and  the comparison principle,
  we obtain $U(x_{n_0},t)>\psi(0)>\e$
  for all $t>t_{n_0}.$  This is a contradiction to \eqref{eq:341-6}.

  We have shown that the assumption \eqref{eq:40} leads to a
  contradiction, which
  concludes the proof of Lemma \ref{le:finalc2a2}.
\end{proof}

\subsection{Case (T3): $\Theta_+\equiv 0$ 
  and $\Theta_-(t)\ne 0$ for all
  $t$.}
Dealing with case (T3), our assumption in this subsection 
is that $\Theta_+\equiv 0$ 
  and $\Theta_-(t)\ne 0$ for all
  $t$.
The other possibility, $\Theta_-\equiv 0$  and $\Theta_+(t)\ne 0$ for all  $t$ is analogous.  
For definiteness, we also assume that $\Theta_-<0$, hence
$\Theta_-(t)\to \hp$ 
as $t\to\infty$ (which includes the possibility that $\Theta_-\equiv \hp$)
and $f(\hp)=0$; the other possibility, $\Theta_->0$ can be treated
similarly.

First, we prove the desired conclusion regarding the $\al$-limit set
of $U$. 

\begin{lemma}\label{le:3.18}
 $\alpha(U)=\{0\}.$
\end{lemma}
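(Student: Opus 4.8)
The plan is to deduce $\al(U)=\{0\}$ by verifying one of the sufficient conditions of Lemma \ref{le:alpha}. Since we are in case (T3) with $\Theta_+\equiv 0$, and since $\hp<0<\hq$ (the point $(0,0)$ is a center inside $\Pi_0$, hence lies strictly inside the loop $\Lao$, which meets the $u$-axis only at the extreme points $(\hp,0)$, $(\hq,0)$), the nondegeneracy requirements $\Theta_+\not\equiv\hp$ in (a1) and $\Theta_+\not\equiv\hq$ in (a2) hold automatically. Thus it suffices to establish one-sided control of $U$ on a right half-line $[m,\infty)$ as $t\to-\infty$: either $\limsup_{x\ge m,\,t\to-\infty}U(x,t)\le 0$ (then (a1) applies) or $\liminf_{x\ge m,\,t\to-\infty}U(x,t)\ge 0$ (then (a2) applies).

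First I would stabilize the zero number. By Lemma \ref{le:zero}(ii) and (ci), $z(U(\cdot,t))$ is nonincreasing in $t$ and bounded, hence equal to a constant $k$ for all $t<t_0$, with all zeros simple for such $t$. If $k=0$, then $U(\cdot,t)$ has constant sign for $t<t_0$, and since the standing assumption $\Theta_-<0$ forces $U(-\infty,t)<0$, we get $U(\cdot,t)<0$ on all of $\R$ for $t<t_0$; then $\limsup_{x\ge m,\,t\to-\infty}U(x,t)\le 0$ for any $m$, and (a1) gives $\al(U)=\{0\}$ (equivalently, invoke Lemma \ref{le:ep}(ii) with $\vartheta=\hq$).

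In the main case $k\ge 1$, let $\xi(t):=\sup\{x:U(x,t)=0\}$ be the largest zero of $U(\cdot,t)$; it is finite (there are finitely many, isolated zeros and $U\ne 0$ near $+\infty$) and, the zeros being simple, a $C^1$ function of $t\in(-\infty,t_0)$. On the rightmost nodal interval $(\xi(t),\infty)$ the function $U(\cdot,t)$ has a fixed nonzero sign $\sigma\in\{+,-\}$, namely $\sigma=\sgn U_x(\xi(t),t)$, which is continuous and nonvanishing in $t$ and hence constant. The crux is to show that $\xi(t)$ stays bounded above as $t\to-\infty$. For this I would apply Lemma \ref{le:BPQ} with $\theta\equiv 0$ on $J=(-\infty,t_0)$: if instead $\limsup_{t\to-\infty}\xi(t)=\infty$, the lemma forces $U_x$ to be of constant sign on $(\xi(t),\infty)$, so $U(\cdot,t)$ is strictly monotone there; but $U(\xi(t),t)=0$ while $U(x,t)\to\Theta_+(t)=0$ as $x\to\infty$, which a strictly monotone, nonconstant function on $(\xi(t),\infty)$ cannot satisfy. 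Hence $\limsup_{t\to-\infty}\xi(t)<\infty$, and there are $m$ and $t_1\le t_0$ with $\xi(t)\le m$ for all $t<t_1$.

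Finally, for $t<t_1$ the half-line $[m,\infty)$ lies in $(\xi(t),\infty)$, where $U(\cdot,t)$ has the fixed sign $\sigma$. If $\sigma=-$, then $\sup_{x\ge m}U(x,t)\le 0$ for $t<t_1$, so $\limsup_{x\ge m,\,t\to-\infty}U(x,t)\le 0$ and (a1) yields the conclusion; if $\sigma=+$, then $\inf_{x\ge m}U(x,t)\ge 0$, so $\liminf_{x\ge m,\,t\to-\infty}U(x,t)\ge 0$ and (a2) yields it. In every case $\al(U)=\{0\}$. The one genuinely delicate point is excluding $\limsup_{t\to-\infty}\xi(t)=\infty$: because $\Theta_+\equiv 0$ pins the right spatial limit at the \emph{unstable} equilibrium, one cannot bound the zeros on the right by a nonzero boundary value, and the monotonicity argument of Lemma \ref{le:BPQ} together with the equal endpoint values $U(\xi(t),t)=\Theta_+(t)=0$ is precisely what substitutes for such a bound.
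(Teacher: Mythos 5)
Your proposal is correct and follows essentially the same route as the paper's proof: dispose of the constant-sign case via Lemma \ref{le:ep}, then use simplicity of zeros and the stabilized zero number to define the largest zero $\xi(t)$, invoke Lemma \ref{le:BPQ} with $\theta\equiv 0$ (exploiting $U(\xi(t),t)=0=\Theta_+(t)$ to rule out monotonicity on $(\xi(t),\infty)$) to bound $\xi(t)$ from above as $t\to-\infty$, and conclude with conditions (a1)/(a2) of Lemma \ref{le:alpha}. Your write-up is somewhat more explicit than the paper's (constancy in $t$ of the sign on the rightmost nodal interval, verification of $\Theta_+\not\equiv\hat p,\hat q$), but the substance is identical.
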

\begin{proof} If
$U>0$ or $U<0$, then the conclusion follows from
Lemma \ref{le:ep}(i). Henceforth we therefore
assume that $z( U(\cdot,t))\ge 1$ holds for some $t$ and, consequently,
for any sufficiently large negative $t$. By (ci) and the monotonicity of the
zero number,  
$z(U(\cdot,t))$ is finite and independent of $t$
for large negative $t$  and all zeros of 
$U(\cdot,t)$ are simple. We denote by $\xi(t)$ the largest of these
zeros. Since $U(\infty,t)=\Theta_+(t)=0$, the function $U(\cdot,t)$ is not
  monotone in $(\xi(t),\infty)$. Therefore, by Lemma \ref{le:BPQ},
  $\xi(t)$ is bounded from above as $t\to-\infty$. 
  Thus, there are $t_0,m\in \R$ such that either $U(\cdot,t)<0$ on
    $(m,\infty)$ for all $t<t_0$ or $U(\cdot,t)>0$ on
    $(m,\infty)$ for all $t<t_0$. In either case, an application
    of Lemma \ref{le:alpha} gives $\al(U)=\{0\}$.
\end{proof}

We now consider the $\om$-limit set of $U$. The following
lemma,
in conjunction with Lemma \ref{le:3.18},
shows that \eqref{eq:6b} holds in the case
(T3). 

\begin{lemma}
  \label{le:c3om}
  $ \tau\left(\omega(U)\right)\subset\Lambda_{out}$.
\end{lemma}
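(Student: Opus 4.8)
The plan is to prove that $U$ is quasiconvergent and then to rule out $\om(U)=\{0\}$; the inclusion $\tau(\om(U))\subset\Lao$ will then follow from connectedness. First I would invoke Theorem \ref{thmPP1}. Since $\Theta_+\equiv0$ and $\Theta_-(t)\ne0$, for every $\la\in\R$ and every $t$ the reflection $V_\la U(\cdot,t)$ has the nonzero limits $-\Theta_-(t)$ and $\Theta_-(t)$ as $x\to\mp\infty$; hence it is bounded away from $0$ near $\pm\infty$, so its zeros are confined to a bounded set and, being isolated by Lemma \ref{le:zero}(i), are finite in number. Thus condition (ii) of Theorem \ref{thmPP1} holds, $\om(U)$ consists of steady states and contains no nonconstant periodic function. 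Together with (c0), this makes $\tau(\om(U))$ a connected subset of $\ol{\Pi_0}\setminus\cP_0=\{(0,0)\}\cup\Lao$, so either $\om(U)=\{0\}$ or $\tau(\om(U))\subset\Lao$, as desired. It remains to exclude $\om(U)=\{0\}$.

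For the main case, in which $\Theta_-$ is the nonconstant (monotone) solution of $\dot\xi=f(\xi)$ with $\Theta_-(\infty)=\hp$, I would argue by a single application of the monotonicity of the zero number, avoiding the generalized limit sets entirely. By (c0) we have $(U,U_x)\in\Pi_0\subset\{\hp<u<\hq\}$, so $U(\cdot,0)>\hp$ on $\R$; since moreover $U(x,0)\to\Theta_-(0)>\hp$ and $U(x,0)\to0$ as $x\to\mp\infty$, a compactness argument yields $\inf_{x\in\R}U(x,0)>\hp$. Fix $c$ with $\hp<c<\inf_x U(x,0)$ (so $c\in(\hp,0)$) and set $w:=U-c$, a nontrivial solution of a linear equation \eqref{eq:lin}. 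Then $z(w(\cdot,0))=0$, whereas for $t$ large enough that $\Theta_-(t)<c$ one has $w(-\infty,t)<0<w(+\infty,t)$ and hence $z(w(\cdot,t))\ge1$. This contradicts Lemma \ref{le:zero}(ii), which forces $z(w(\cdot,t))\le z(w(\cdot,0))=0$ for $t\ge0$. Hence $\om(U)=\{0\}$ is impossible and \eqref{eq:6b} holds in this case.

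The remaining case $\Theta_-\equiv\hp$ is the main obstacle. Here $U(-\infty,t)=\hp$ for all $t$, so $\inf_x U(\cdot,t)=\hp$ and the zero-number shortcut breaks down; indeed the constant $\hp$ now belongs to $A(U)$, a left plateau present for all $t$. Assuming $\om(U)=\{0\}$ and recalling $\al(U)=\{0\}$ from Lemma \ref{le:3.18}, the $\hp$-to-$0$ transition of $U$ recedes to $-\infty$ as $t\to\pm\infty$. Since $(\hp,0)$ is a monostable interval ($f<0$ on it, with $\hp$ stable and $0$ unstable), such a recession should be dynamically impossible---the stable phase $\hp$ invades the unstable phase $0$---and I would make this rigorous by a comparison/trapping argument modeled on the proof of Lemma \ref{le:finalc2a2}. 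Using Lemma \ref{le:basicrel}(ii) to place a point of $\Lao$ in $A(U)\cup\Om(U)$, together with the structural facts from the proof of Lemma \ref{le:3.18} (the largest zero of $U(\cdot,t)$ stays bounded, and $U$ is eventually sign-definite on a right half-line), I would trap a suitable bump of $U$ between the graphs of nonstationary periodic solutions $\psi\subset\Pi_0$ on one period and propagate the endpoint ordering forward in time via Lemma \ref{le:BPQ}, forcing $U$ to keep a value bounded away from $0$ at a fixed point for all large $t$ and thereby contradicting $\om(U)=\{0\}$ (or the limit $\Theta_+\equiv0$). Determining the direction of escape and the correct sign of the trapped bump, and the attendant bookkeeping of $A(U)$ and $\Om(U)$, is the delicate part where the real work lies.
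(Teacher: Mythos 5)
Your opening reduction is correct and coincides with the paper's first step: Theorem \ref{thmPP1}(ii) applies because $V_\la U(\cdot,t)$ tends to the nonzero limits $\pm\Theta_-(t)$ as $x\to\mp\infty$, so $U$ is quasiconvergent, and (c0), (ci) plus connectedness of $\tau(\om(U))$ reduce everything to excluding $\om(U)=\{0\}$. The gap is in what follows. Your ``main case'' argument is invalid: $w=U-c$ is \emph{not} a solution of a linear equation \eqref{eq:lin}. Differences of two solutions of \eqref{eq:1} solve such equations, but the constant $c$ is a solution of \eqref{eq:1} only if $f(c)=0$, and $f$ has no zeros in $(\hp,0)$: by Lemma \ref{le:p0}, every point of $\cI(\Lao)$ other than the origin lies on a nonstationary periodic orbit of \eqref{eq:sys}, so $(c,0)$ is not an equilibrium. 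Hence Lemma \ref{le:zero}(ii) cannot be invoked for $w$, and its conclusion is actually false in this setting: since $f<0$ on $(\hp,0)$, the spatial limit $U(-\infty,t)=\Theta_-(t)$ decreases through the level $c$ at some finite time, at which a sign change of $U-c$ enters from $x=-\infty$; the number of zeros of $U(\cdot,t)-c$ genuinely increases from $0$ to at least $1$. (Zeros are barred from entering from spatial infinity only for solutions of \eqref{eq:lin}, e.g.\ for $U-\Theta_-$, whose limit at $-\infty$ vanishes identically.) Note also that your argument nowhere uses the hypothesis $\om(U)=\{0\}$: were it correct, it would prove that no entire solution with nonconstant $\Theta_-$ exists at all. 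That is not true and is not what needs proving---such solutions are exactly fronts invading the unstable state $0$, with $\om(U)$ meeting $\Lao$; the paper treats them on the same footing as $\Theta_-\equiv\hp$ (compare Lemma \ref{le:conv}(iii)(b) and Remark \ref{rm:t1} for the analogous phenomenon in case (T1)).

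Your remaining case $\Theta_-\equiv\hp$ is only a plan, and you concede yourself that the decisive points (direction of escape, sign of the trapped bump) are unresolved; so neither case is actually established. For comparison, the paper excludes $0\in\om(U)$ in both subcases at once, splitting instead by the type of $\Lao$: if $\Lao$ is a heteroclinic loop, $U$ is put below an increasing front-like supersolution which, by \cite{FMcL}, converges in $L^\infty(\R)$ to a shift of $\Phi^+$, whence every $\varphi\in\om(U)$ lies below that shift; if $\Lao$ is a homoclinic loop, one compares with a threshold solution from \cite{P:unbal} converging in $L^\infty(\R)$ to a shift of the ground state $\Phi$, and applies the zero number---legitimately, to the difference $\tilde u(\cdot+\eta,t)-U(\cdot,t)$ of two genuine solutions---to get $z(\Phi(\cdot-\mu+\eta)-\varphi)\le 1$ for all $\varphi\in\om(U)$, which rules out $\varphi=0$ since a shifted ground state has two zeros. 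Both constructions use only $\Theta_-(0)<0$, so constancy of $\Theta_-$ never enters; this is the mechanism your proposal is missing.
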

\begin{proof}
  By  Theorem \ref{thmPP1} (which applies due to (T3)),
  $U$ is quasiconvergent, so
  $\omega(U)$ contains only nonperiodic steady states or constant
  steady states  (nonconstant periodic steady states are excluded by
  (ci)). By (c0), all $\varphi\in\om(U)$ satisfy $\tau(\varphi)\in
  \bar\Pi_0$, thus Lemma \ref{le:c3om} will be proved if we show that
  $0\not\in \om(U)$. To this end, we use a similar comparison
  arguments as in the proof of
  \cite[Lemma 4.14]{p-Pauthier2}.

  If $\Lao$ is a heteroclinic loop, as in (A2),  choose an increasing
  continuous function $\tilde u_0$ such that
  $0>\tilde u_0(-\infty)>\Theta_-(0)$,  $\tilde u_0(\infty) =\ga_+$, 
  and $\tilde u_0\ge U(\cdot,0)$. By the comparison principle,
  the corresponding solution
  $\tilde u=u(\cdot,\cdot,\tilde u_0)$ of \eqref{eq:1} satisfies
  $\tilde u(\cdot,t)>U(\cdot,t)$ for all $t>0$.  By
  \cite[Theorem 3.1]{FMcL}, the (front-like) solution
  $\tilde u(\codt,t)$ converges in $L^\infty(\R)$ to a shift of the
  increasing standing wave $\Phi^+$, say $\Phi^+(\codt-\eta)$, as
  $t\to\infty$.  This implies that $\varphi\le \Phi^+(\codt-\eta)$
  for all $\varphi\in\om(U)$; in particular $0\not\in \om(U)$.

  Assume now that $\Lao$ is a homoclinic loop, as in (A1).
  We have $\ga=\hp$ since, as noted above, $\Theta^-<0$ implies that
  $f(\hp)=0$. The ground state $\Phi$ satisfies $\Phi>\ga$,
  $\Phi(\pm\infty)=\ga$.  
  To prove that $0\not\in \om(U)$, we again use  a
  suitable
  comparison function $\tilde u_0$; specifically,  a bounded $C^1$ 
  function  $\tilde u_0$ with the following properties:
  \begin{itemize}
  \item[(s1)] $\tilde u_0$ has a unique critical point $x_0$ and
    $\tilde u_0(x_0)$ is the global maximum of $\tilde u_0$; 
    \item[(s2)] the limits  $\tilde u_0(-\infty)$, $\tilde
      u_0(\infty)$, satisfy the 
      following relations: 
      $$\Theta_-(0)<\tilde u_0(-\infty)<0,\quad \tilde
      u_0(\infty)=\ga;$$  
    \item[(s3)]  
    as $t\to\infty$,  the 
      solution   $\tilde u(\cdot,t):=u(\cdot,t,\tilde u_0)$ converges
      in $L^\infty(\R)$ to  $\Phi(\cdot-\mu)$,
      for some $\mu\in\R$. 
  \end{itemize}
  The existence of such a function $\tilde u_0$ is a 
  consequence of  \cite[Lemma
  3.5]{P:unbal}. Indeed, in \cite[Lemma
  3.5]{P:unbal}, the convergence to a shift $\Phi(\cdot-\mu)$  of the
  ground state $\Phi$  
  is proved for the solution $u(\cdot,t,g)$ whose initial datum $g$
  is given by the following relations (with  $\ga=\Phi(\pm\infty)$
  as above)
\begin{equation}
  \label{g-def}
  g(x)=\left\{ 
  \begin{alignedat}{2}
      \ &\beta\quad &&(x\in (-\infty,-q),\\
      \ &\vartheta\quad &&(x\in [-q,0]),\\      
\ &\ga \quad &&(x\in (0,\infty]),
    \end{alignedat}
\right.
\end{equation}
where $\be\in (\ga,0)$, $\vartheta, q\in (0,\infty)$ are constants;
$\be\in (\ga,0)$ can be chosen arbitrarily,  while
$\vartheta, q$ have to be selected suitably.
We fix any $\be$ with $\Theta_-(0)<\be<0$ and take the corresponding constants
$\vartheta, q>0$. Then for any sufficiently small $t_0>0$, the function
$\tilde u_0:=u(\cdot,t_0,g)$ is of class $C^1$ and satisfies (s1)
(cp. \cite[Remark 3.6]{P:unbal}).  For the limits at $x=\pm\infty$
we have the following relations:  $u(\infty,t,g)=\ga$ for each $t>0$ and
$u(-\infty,t,g)\to \beta$ as $t\downto 0$ (the function $t\mapsto
u(-\infty,t,g)$
is the solution of the equation  
$\dot \xi=f(\xi)$ with the initial condition $\xi(0)=\be$, see \cite[Lemma
3.3]{P:unbal} or \cite[Theorem 5.5.2]{Volpert3}).
Therefore,  (s2) holds for any sufficiently small $t_0>0$ as well. 
Finally, since
$u(\cdot,t,\tilde u_0)=u(\cdot,t_0+t,g)$ for $t>0$,
condition (s3) is satisfied.

Now, by (T3) and the assumptions made at the beginning of this
subsection, the function $U(x,0)$ satisfies the following relations:
$U(x,0)\to \Theta_-(0)$, $U_x(x,0)\to 0$ as $x\to-\infty$; 
$U(x,0)\to 0$ as $x\to\infty$. If $\Theta_-(0)=\ga$ (which holds if
and only if 
$\Theta_-(t)=\ga$ for
all $t$), then also $U_x(x,0)>0$ for all sufficiently large negative
$x$. These relations in conjunction with (s1) and (s2)
imply that  if $\eta>0$ is sufficiently
large, then the function $u_0(\cdot+\eta)-U(\codt,0)$
has only one zero and the zero is simple. 
It follows that 
$z(\tilde u(\cdot+\eta,t)-U(\codt,t))\le 1$ for all
$t>0$. Since $\tilde u(\cdot+\eta,t)\to \Phi(\cdot-\mu+\eta)$ as
$t\to\infty$, 
we obtain,  taking into account that the difference of
  any two steady states \eqref{eq:1} has only simple zeros, that
  $z(\Phi(\codt-\mu+\eta)-\varphi)\le 1$ for each $\varphi\in \om(U)$.
  In particular, $0\not\in \om(U)$.
\end{proof}

\bibliographystyle{amsplain}

\end{document}